\providecommand{\U}[1]{\protect \rule{.1in}{.1in}}
\newtheorem{theorem}{Theorem}
\theoremstyle{plain}
\newtheorem{corollary}{Corollary}
\newtheorem{definition}{Definition}
\newtheorem{lemma}{Lemma}
\newtheorem{proposition}{Proposition}
\numberwithin{equation}{section}
\begin{document}
\title[Stability of group relations]{Stability of group relations under small Hilbert-Schmidt perturbations}
\author{Don Hadwin}
\address{University of New Hampshire}
\email{operatorguy@gmail.com}
\author{Tatiana Shulman}
\address{Institute of Mathematics of the Polish Academy of Sciences, Poland }
\email{tshulman@impan.pl}
\subjclass[2000]{Primary 20Fxx; Secondary 46Lxx}
\keywords{tracial ultraproduct, almost commuting matrices, 1-relator groups, character rigidity}

\begin{abstract} If matrices almost satisfying a group relation are close to matrices exactly satisfying the relation, then we say that a group is
matricially stable. Here "almost" and "close" are in terms
of  the Hilbert-Schmidt norm. Using tracial 2-norm on $II_1$-factors we similarly define $II_1$-factor stability for groups.
Our main result is that all 1-relator groups with non-trivial center are $II_{1}$-factor stable.
Many of them are also matricially stable and RFD. For amenable groups we give a complete characterization of
matricial stability in terms of the following approximation property for characters: each character must be
a pointwise limit of traces of finite-dimensional representations. This allows us to prove matricial  stability
for the discrete Heisenberg group $\mathbb H_3$ and for all virtually abelian groups. For non-amenable groups the same approximation property is a necessary condition
for being matricially stable. We study this approximation property and show that RF groups with character rigidity have it.
\end{abstract}

\maketitle



\section*{Introduction}

Given an equation   of noncommutative variables  one can ask if
it is "stable", meaning that  each of its "almost" solutions  is "close" to a solution.

Examples of stability questions are famous questions about almost commuting matrices, which ask  whether almost commuting matrices are close to commuting ones.
The answers depend very much on classes of matrices and on the matrix norm one uses to measure "almost" and "close".
For instance for the operator norm those questions are due to Halmos (\cite{Halmos}). When matrices are two self-adjoint contractions the answer is positive by Lin's theorem (\cite{Lin}) and when they are two unitaries or three self-adjoint contractions, the answer is negative (\cite{Voiculescu}, \cite{Dav2}).
For the normalized Hilbert-Schmidt norm the question was formulated by Rosenthal \cite{Rosenthal} and has an affirmative answer for almost commuting unitaries, self-adjoint contractions and normal contractions (\cite{Glebsky}, \cite{FilonovKachkovskiy}, \cite{Hadwin-Li}).
In our recent work  \cite{MainPaper} we studied stability of not only commutator relations, but of general $C^*$-algebraic relations with respect to the normalized Hilbert-Schmidt norm and similar tracial norms on tracial $C^*$-algebras, in particular on $II_1$-factors. There we obtained far reaching generalizations of all the previous results (\cite{Glebsky}, \cite{FilonovKachkovskiy}, \cite{Hadwin-Li}).

The interest to stability questions with respect to the normalized Hilbert-Schmidt norm also has appeared recently in group theory, in the context of sofic and hyperlinear groups (\cite{Glebsky}, \cite{Glebsky2}, \cite{AP}). In particular one is interested in the question of whether permutation matrices almost satisfying a group relation are close to permutation matrices exactly satisfying the relation. Here "almost" and "close" are measured by the normalized Hamming distance.  For relations defining a finitely-generated abelian group it was answered  in the affirmative by Arzhantseva and Paunescu \cite{AP} (in fact they proved it not only with respect to the normalized Hammng distance but for arbitrary metrics). Although proving stability for permutations is not the same as for general unitary matrices and requires different techniques, however,  as was noticed in \cite{AP}, it has  similar flavor because the normalized Hamming distance can be expressed using the
Hilbert–-Schmidt
distance.

In this paper we focus on stability of group relations with respect to the normalized Hilbert-Schmidt norm and similar tracial norms.

Let $G$ be a finitely presented discrete group, and let
\[
G = \left< S|R\right>  =\left< g_{1},..., g_{s}\;|\;r_{1},...,r_{l}\right>
\]
be its presentation with $g_{i}$ being generators and $r_{j} = r_{j}
(g_{1},...,g_{s})$ being relations. We will say that $G$ is {\it matricially stable} if for any $\epsilon>0$ there is a $\delta>0$,  such that
if $k\in \mathbb N$ and  $U_1, \ldots, U_s$ are unitary $k\times k$ matrices satisfying

\[
\|1 - r_{j}\left( U_1, \ldots, U_s\right) \|_{2} \le \delta
\]
for all $j = 1, \ldots, l$, then there are unitary $k\times k$ matrices $U_1', \ldots, U_s'$ satisfying \[
r_{j}\left( U_1', \ldots, U_s'\right) =1
\]
for all $j = 1, \ldots, l$, and $\|U_i - U_i'\|_2 \le \epsilon$, for all $i=1, \ldots, s$.

This natural notion of stability can be easily generalized to arbitrary, not necessarily finitely presented,  discrete groups using tracial ultraproducts (see section 2 for the details). It implies in particular that the
property of being  matricially stable does not depend on the choice of a
generating set and a presentation.~ \footnote{In the context of sofic groups the fact that the stability of metric approximations (when a normalized bi-invariant metric is fixed on a class of approximating groups)
 does not depend on the choice of the generators and the presentation of a finitely presented groups is due to G. Arzhantseva and L. Paunescu \cite{AP}.}

 Using tracial 2-norm on $II_1$-factors we similarly define $II_1$-factor stability for groups (and some other versions of stability, see section 2).

For amenable groups we give a complete characterization of
matricial stability in terms of the following approximation property for characters: each character must be
a pointwise limit of traces of finite-dimensional representations (Theorem \ref{amenablegroup}). This allows us to prove matricial  stability
for the discrete Heisenberg group $\mathbb H_3$ (Theorem \ref{Heisenberg}) and for all virtually abelian groups (Theorem \ref{VirtAb}). For non-amenable groups the same approximation property is a necessary condition
for being matricially stable (Theorem \ref{NecessaryConditionGroups}). Thus it is very interesting for us to know what groups have this approximation property.
Recall that a group $G$ has character rigidity if the only extremal characters of $G$ which are not induced from the center
 are the traces of finite-dimensional representations (\cite{PetersonThom}).
In Corollary \ref{RFCharRig} we prove that RF groups with character rigidity have the approximation property above.

One of the main results of the paper is $II_1$-factor stability for a big class of non-amenable groups, namely for all 1-relator groups with non-trivial center (Theorem \ref{1-relatorTheorem}).
Many of those groups are also matricially stable (Theorem \ref{1-relator1}) and RFD (Theorem \ref{RFD}). By a group being RFD we mean that its full $C^*$-algebra is residually finite-dimensional (it is not the same as being residually finite (RF)).

\bigskip

\noindent \textbf{Acknowledgements.} The first author gratefully acknowledges a
Collaboration Grant from the Simons Foundation. The research of the
second-named author was supported by the Polish National Science Centre grant
under the contract number DEC- 2012/06/A/ST1/00256, by the grant H2020-MSCA-RISE-2015-691246-QUANTUM DYNAMICS and from the Eric Nordgren
Research Fellowship Fund at the University of New Hampshire.

\section{Preliminaries}

\noindent If a unital C*-algebra $\mathcal{B}$ has a tracial state $\rho$, we
define a seminorm  $\left \Vert \cdot \right \Vert
_{2}=\left \Vert \cdot \right \Vert _{2,\rho}$ on $\mathcal B$  by%
\[
\left \Vert b\right \Vert _{2}=\rho \left(  b^{\ast}b\right)  ^{1/2}.
\]

Suppose $I$ is an infinite set and $\alpha$ is an ultrafilter on $I$. We say
$\alpha$ is \emph{nontrivial} if there is a sequence $\left \{  E_{n}\right \}
$ in $\alpha$ such that $\cap_{n}E_{n}=\varnothing$. Suppose $\alpha$ is a
nontrivial ultrafilter on a set $I$ and, for each $i\in I$, suppose
$\mathcal{A}_{i}$ is a unital C*-algebra with a tracial state $\rho_{i}$.
By $%
{\displaystyle \prod_{i\in I}}
\mathcal{A}_{i}$ we will denote the $C^*$-product of the $C^*$-algebras $\mathcal A_i$, that is the $C^*$-algebra
$$ %
{\displaystyle \prod_{i\in I}}
\mathcal{A}_{i} = \{(a_i)_{i\in I}\;|\; a_i \in \mathcal A_i, \sup_{i\in I} \|a_i\| <\infty\}$$ with the norm given by $\|(a_i)_{i\in I}\| = \sup_{i\in I} \|a_i\|$. Note that sometimes one uses another notation for that,
$\oplus^{l^{\infty}}\mathcal A_i$, see \cite{Pestov}.

The
\emph{tracial ultraproduct} $%
{\displaystyle \prod_{i\in I}^{\alpha}}
\left(  \mathcal{A}_{i},\rho_{i}\right)  $ is the C*-product $%
{\displaystyle \prod_{i\in I}}
\mathcal{A}_{i}$ modulo the ideal $\mathcal{J}_{\alpha}$ of all elements
$\left \{  a_{i}\right \}  $ in $%
{\displaystyle \prod_{i\in I}}
\mathcal{A}_{i}$ for which%
\[
\lim_{i\rightarrow \alpha}\left \Vert a_{i}\right \Vert _{2,\rho_{i}}^{2}%
=\lim_{i\rightarrow \alpha}\rho_{i}\left(  a_{i}^{\ast}a_{i}\right)  =0.
\]
We denote the coset of an element $\left \{  a_{i}\right \}  \in %
{\displaystyle \prod_{i\in I}}
\mathcal{A}_{i}$ by $\left \{  a_{i}\right \}  _{\alpha}$.

Tracial ultraproducts for factor von Neumann algebras was first introduced by
S. Sakai \cite{Sakai} where he proved that a tracial ultraproduct of finite
factor von Neumann algebras is a finite factor. More recently, it was shown in
\cite{Hadwin-Li} that a tracial ultraproduct $%
{\displaystyle \prod_{i\in I}^{\alpha}}
\left(  \mathcal{A}_{i},\rho_{i}\right)  $ of C*-algebras is always a von
Neumann algebra with a faithful normal tracial state $\rho_{\alpha}$ defined
by%
\[
\rho_{\alpha}\left(  \left \{  a_{i}\right \}  _{\alpha}\right)  =\lim_{i\rightarrow
\alpha}\rho_{i}\left(  a_{i}\right)  .
\]
If there is no confusion, we will denote it just by $\rho$.

  The $C^*$-algebra of all complex n by n matrices will be denoted by $M_n(\mathbb C)$.
It has a unique tracial state  $tr_n = tr.$
By $\tau_{\alpha}$ we denote the corresponding tracial state on the tracial ultraproduct $\prod_{n\in \mathbb N}^{\alpha} (M_n(\mathbb C), tr_n).$

For a unital $C^*$-algebra $\mathcal A$, its unitary group will be denoted by $\mathcal U\left(\mathcal A\right)$.

\section{Stability for groups}

Let $G$ be a finitely presented discrete group, and let
\[
G = \left< S|R\right>  =\left< g_{1},..., g_{s}\;|\;r_{1},...,r_{l}\right>
\]
be its presentation with $g_{i}$ being generators and $r_{j} = r_{j}
(g_{1},...,g_{s})$ being relations. We assume that the set $S = \{g_{1}%
,...,g_{s}\}$ is symmetric, i.e. for every $g_{i}$ it contains $g^{-1}_{i}$ too.

Let $\mathcal{C}$ be a class of $C^{*}$-algebras and let $\mathcal{A }%
\in \mathcal{C}$ be unital with a tracial state $\rho$.

\begin{definition} $f: S
\to \mathcal{U}(\mathcal{A})$ is an \textit{$\epsilon$-almost homomorphism} if
\[
\|1 - r_{j}\left( f(g_{1}), \ldots, f(g_{s})\right) \|_{2, \rho} \le \epsilon
\]
for all $j = 1, \ldots, l$. \footnote{A similar notion was introduced in \cite{Man1,  Man3} with the difference that there the operator norm was involved.
  One should distinguish $\epsilon$-almost homomorphisms from completely different notions of group
 quasi-representations and $\delta$-homomorphisms as in \cite{Shtern, BOT}, where almost multiplicativity is required  on the whole  group.}
\end{definition}

\begin{definition} $G$ is \textit{$\mathcal{C}$-tracially stable} if for any
$\epsilon> 0$ there is $\delta>0$ such that for any  unital $C^{*}$-algebra
$\mathcal{A}\in \mathcal{C}$ with a tracial state $\rho$ and for any $\delta
$-almost homomorphism $f: S \to \mathcal{U}(\mathcal{A})$ there is a
homomorphism $\pi: G \to \mathcal{U}(\mathcal{A})$ such that
\[
\| \pi(g) - f(g)\|_{2, \rho} \le \epsilon
\]
for any $g\in S$.
\end{definition}

This natural notion of stability can easily be generalized for arbitrary
discrete, not necessarily finitely presented, groups.

\begin{definition}
$G$ is \textit{$\mathcal{C}$-tracially stable} if for any tracial ultraproduct
$%
{\displaystyle \prod_{i\in \mathbb N}^{\alpha}}
\left(  \mathcal{A}_{i},\rho_{i}\right)  $ of unital $C^{*}$-algebras
$\mathcal{A}_{i}\in \mathcal{C}$ with a trace $\rho_i$, any homomorphism $f: G \to \mathcal{U }\left(
{\displaystyle \prod_{i\in \mathbb N}^{\alpha}}
\left(  \mathcal{A}_{i},\rho_{i}\right)  \right) $ is liftable,
meaning that  for each $i\in \mathbb N$ there is a
homomorphism $f_{i}: G \to \mathcal{U}(\mathcal{A}_{i})$ such that $f(g) =
\{f_{i}(g)\}_{\alpha}.$
\end{definition}

\medskip

 We will show now that for a finitely presented group these two definitions of stability
coincide. It will imply in particular that in the first definition the
property of being  stable does not depend on the choice of a
generating set and a finite presentation.

\begin{proposition} For a finitely presented group the two definitions of stability above
coincide.
\end{proposition}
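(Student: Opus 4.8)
The plan is to prove the two definitions are equivalent by establishing both implications, treating the ``$\delta$-$\epsilon$'' definition (Definition~2) and the ultraproduct lifting definition (Definition~3) as two faces of the same statement.

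First I would prove that Definition~2 implies Definition~3. Suppose $G$ is stable in the first sense, and let $f\colon G\to\mathcal U\!\left(\prod_{i\in\mathbb N}^{\alpha}(\mathcal A_i,\rho_i)\right)$ be a homomorphism. For each generator $g_k$, lift $f(g_k)$ to a representing sequence; since $f(g_k)$ is unitary in the ultraproduct, I can arrange the lifts $u_{k,i}\in\mathcal U(\mathcal A_i)$ to be \emph{unitary} for each $i$ (the unitary group of the ultraproduct is the ultraproduct of the unitary groups, a standard fact proved via functional calculus, and I would state it as such). Because $f$ is a homomorphism, each relation satisfies $r_j(f(g_1),\dots,f(g_s))=1$ in the ultraproduct, which unpacks to $\lim_{i\to\alpha}\|1-r_j(u_{1,i},\dots,u_{s,i})\|_{2,\rho_i}=0$. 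Now apply the first definition: given $\epsilon$, choose $\delta$; for $\alpha$-almost every $i$ the tuple $(u_{k,i})_k$ is a $\delta$-almost homomorphism, so there exist genuine homomorphisms $f_i\colon G\to\mathcal U(\mathcal A_i)$ with $\|f_i(g_k)-u_{k,i}\|_{2,\rho_i}\le\epsilon$. Letting $\epsilon\to 0$ along a suitable decreasing sequence and using a standard diagonal/reindexing argument across the ultrafilter, I obtain lifts $f_i$ with $\{f_i(g_k)\}_\alpha=f(g_k)$, so $f$ is liftable.

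For the converse, that Definition~3 implies Definition~2, I would argue by contradiction. If $G$ were not stable in the first sense, there would be an $\epsilon>0$ and, for each $n$, a $C^*$-algebra $\mathcal A_n\in\mathcal C$ with trace $\rho_n$ and a $\tfrac1n$-almost homomorphism $f^{(n)}\colon S\to\mathcal U(\mathcal A_n)$ that is not $\epsilon$-close to any genuine homomorphism. Form the tracial ultraproduct $\prod_{n}^{\alpha}(\mathcal A_n,\rho_n)$ and set $U_k=\{f^{(n)}(g_k)\}_\alpha$. Since the almost-homomorphism defect tends to $0$, each relation gives $\|1-r_j(U_1,\dots,U_s)\|_{2,\rho_\alpha}=\lim_{n\to\alpha}\|1-r_j(\cdots)\|_{2,\rho_n}=0$, so the $U_k$ exactly satisfy the relations and hence (using finite presentation, so that the universal group property applies) define a genuine homomorphism $f\colon G\to\mathcal U(\prod_n^\alpha)$ with $f(g_k)=U_k$. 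By Definition~3 this $f$ lifts to homomorphisms $f_n\colon G\to\mathcal U(\mathcal A_n)$ with $f(g_k)=\{f_n(g_k)\}_\alpha$, meaning $\lim_{n\to\alpha}\|f_n(g_k)-f^{(n)}(g_k)\|_{2,\rho_n}=0$. Then for $\alpha$-almost every $n$ we have $\|f_n(g_k)-f^{(n)}(g_k)\|_{2,\rho_n}<\epsilon$ for all $k$, contradicting the choice of $f^{(n)}$.

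The main obstacle, and the only place where finite presentation is genuinely used, is the step in the converse where I claim the exactly-satisfying tuple $(U_1,\dots,U_s)$ in the ultraproduct extends to a \emph{homomorphism} of $G$. This is exactly the universal property $G=\langle S\mid R\rangle$: a map on generators taking values in a group and satisfying all the defining relations $r_j=1$ extends uniquely to a homomorphism — but this requires that $R$ \emph{generate} the full set of relations, i.e. that the presentation be complete, which finite presentation guarantees with finitely many $r_j$. I would make this point explicit. The forward direction's technical care lies instead in the diagonal argument that converts the family of $\epsilon$-dependent lifts into a single lift equal to $f$ along the ultrafilter; I expect this to be routine but worth spelling out, using that $\alpha$ is nontrivial so that the relevant index sets in $\alpha$ can be intersected down.
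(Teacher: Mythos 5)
Your proposal is correct and follows essentially the same route as the paper's proof: the forward direction lifts the generators to unitary representing sequences, uses that the relation defects vanish along $\alpha$, and diagonalizes over a decreasing chain of sets in the nontrivial ultrafilter, while the converse assembles a sequence of counterexamples into an ultraproduct homomorphism (via the universal property of the finite presentation, which you rightly flag as the place finiteness is used) whose liftability would contradict the choice of counterexamples. The only cosmetic difference is the justification of the unitary-lifting step: you invoke functional calculus (valid, since the tracial ultraproduct is a von Neumann algebra, so every unitary is $e^{ih}$ with a liftable self-adjoint $h$), whereas the paper applies its lifting theorem for $C(\mathbb{T})$ from \cite{MainPaper}.
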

\begin{proof}
Let \[
G = \left< S|R\right>  =\left< g_{1},..., g_{s}\;|\;r_{1},...,r_{l}\right>.
\]
To see that the first definition of stability implies the second one, assume that $G$ is $\mathcal C$-stable with
respect to the first definition of stability and let
 $f:G\rightarrow  \mathcal{U }\left(%
{\displaystyle \prod_{i\in \mathbb N}^{\alpha}}
\left(  \mathcal{A}_{i},\rho_{i}\right)\right)  $ be a homomorphism.
First of all we notice that any unitary in $%
{\displaystyle \prod_{i\in \mathbb N}^{\alpha}}
\left(  \mathcal{A}_{i},\rho_{i}\right)$ can be lifted to a unitary in $\prod_{i\in \mathbb N} \mathcal A_i$.
Indeed let $u\in %
{\displaystyle \prod_{i\in \mathbb N}^{\alpha}}
\left(  \mathcal{A}_{i},\rho_{i}\right)$ be a unitary. Let $\mathbb T$ be the unit circle. Since $C(\mathbb T)$ is the universal $C^*$-algebra generated by one unitary,
there is a $\ast$-homomorphism $\phi: C(\mathbb T) \to %
{\displaystyle \prod_{i\in \mathbb N}^{\alpha}}
\left(  \mathcal{A}_{i},\rho_{i}\right)$ such that $\phi(z) = u$ (here $z\in C(\mathbb T)$ is the identity function).
By [\cite{MainPaper}, Th.5.3] applied to $\mathbb T$, $\phi$ can be lifted to a $\ast$-homomorphism $\psi:  C(\mathbb T) \to \prod_{i\in \mathbb N} \mathcal A_i$. Then $\psi(z)$ is a unitary lift
of $u$.

Thus
 for each $1\leq k\leq s$, we can write
\[
f \left(  g_{k}\right)  =\left \{  g_{k}\left(  i\right)  \right \}  _{\alpha
},
\] for some $g_k(i)\in \mathcal U \left(\mathcal A_i\right),$ $i\in \mathbb N$.
We then have, for each $j\le l$, %
\begin{multline*}
0=\left \Vert f \left(  r_j\left(  g_{1},\ldots g_{s}\right)  \right) - 1
\right \Vert _{2,\rho_{\alpha}}=\left \Vert r_j\left(  f \left(  g_{1}\right)
,\ldots f \left(  g_{s}\right)  \right) -1 \right \Vert _{2,\rho_{\alpha}}%
=\\ \lim_{i\rightarrow \alpha}\left \Vert r_j\left(  g_{1}\left(  i\right)
,\ldots,g_{s}\left(  i\right)  \right) -1 \right \Vert _{2,\rho_{i}}.
\end{multline*}
Since $\alpha$ is a nontrivial ultrafilter on $\mathbb N$, there is a decreasing
sequence\medskip \ $E_{1}\supset E_{2}\supset \cdots$ in $\alpha$ such that
$\cap_{k\in \mathbb{N}}E_{k}=\varnothing$. Since  $G$ is
$\mathcal C$-stable with respect to the first definition,  for each positive integer $m$ there is a number
$\delta_{m}>0$ such that, when $$\left \Vert r_j\left(  g_{1}\left(  i\right)
,\ldots,g_{s}\left(  i\right)  \right) -1 \right \Vert _{2,\rho_{i}}<\delta_{m}, $$ $j\le l$,
there is a homomorphism $\gamma_{m,i}:G%
\mathbf{\rightarrow}\mathcal U(\mathcal{A}_{i})$ such that%
\[
\max_{1\leq k\leq s}\left \Vert g_{k}\left(  i\right)  -\gamma_{m,i} \left(
g_{k}\right)  \right \Vert _{2,\rho_{i}}<1/m.
\]
Since $\lim_{i\rightarrow \alpha}\left \Vert r_j\left(  g_{1}\left(  i\right)
,\ldots,g_{s}\left(  i\right)  \right)-1  \right \Vert _{2,\rho_{i}}=0$ we can
find a decreasing sequence $\{A_{n}\}$ in $\alpha$ with $A_{n}\subset E_{n}$
such that, for every $i\in A_{n}$%
\[
\left \Vert r_j\left(  g_{1}\left(  i\right)  ,\ldots,g_{s}\left(  i\right)
\right) -1 \right \Vert _{2,\rho_{i}} \le \delta_{n}.
\]
For $i\in A_{n}\backslash A_{n+1}$ we define $f_{i}=\gamma_{n,i}$. For $i\in \mathbb N \backslash A_1$ we define $f_i$ arbitrarily. We then
have that $\left \{  f_{i}\right \}  _{i\in \mathbb  N}$ is a lifting
of $f$.

On the other hand, if $G$ is not $\mathcal C$-stable with respect to the first definition of stability, then there is
an $\varepsilon>0$ such that, for every positive integer $n$ there is a
 unital C*-algebra $  \mathcal{A}_{n}  $ with a trace $\rho_{n}$ and
$g_{1}\left(  n\right)  ,\ldots,g_{s}\left(  n\right) \in \mathcal U (\mathcal{A}_{n})  $ such that%
\[
\left \Vert r_j\left(  g_{1}\left(  n\right)  ,\ldots,g_{s}\left(  n\right)
\right)  -1\right \Vert _{2,\rho_{n}}<1/n,
\]
but for every homomorphism $\gamma: G%
\rightarrow \mathcal U(\mathcal{A}_{n})$%
\[
\max_{1\leq k\leq s}\left \Vert g_{k}\left(  n\right)  -\gamma \left(
g_{k}\right)  \right \Vert _{2,\rho_{n}}\geq \varepsilon.
\]
If we let $\alpha$ be any free ultrafilter on $\mathbb{N}$, we have that the
map $f$ defined by %
\[
f \left(  g_{k}\right)  =\left \{  g_{k}\left(  n\right)  \right \}  _{\alpha}%
\]
is a homomorphism from $G$ into $\mathcal U\left(%
{\displaystyle \prod_{n\in \mathbb{N}}^{\alpha}}
\left(  \mathcal{A}_{n},\rho_{n}\right)\right)  $ that is not liftable.

\end{proof}

Given a discrete group $G$ and a C*-algebra $A$, let $\pi:G \to \mathcal U(A)$ be a unitary representation of $G$ on $\mathcal U(A)$. Let $\mathbb C G$ denote the group algebra of $G$. Then $\pi$ induces a homomorphism $\pi:\mathbb C G \to A$. Recall that the {\it full $C^*$-algebra} $C^*(G)$ is the completion of $\mathbb C G$ with respect to the norm
$$\|a\|:=\sup\{\|\pi(a)\|: \pi:G\to \mathcal U(A) \; \text{is a homomorphism} \}.$$

The $C^*$-algebra $C^*(G)$ has the following universal property (which determines it uniquely up to
isomorphism of $C^*$-algebras). Given any $C^*$-algebra $A$ and any unitary representation $\pi : G \to \mathcal U(A)$, there exists a
unique $\ast$-homomorphism $\tilde \pi : C^*(G) \to A$ that satisfies $\tilde \pi(\delta(g)) = \pi(g)$ for every $g \in G$ (here $\delta: G \to \mathbb C G$ is the canonical embedding).

\medskip

In \cite{MainPaper} we introduced the following definition of $\mathcal C$-tracial stability for $C^*$-algebras. We call a $C^*$-algebra $A$ {\it $\mathcal C$-tracially stable} if for any ultrafilter $\alpha$ on $\mathbb N$ and any unital $C^{*}$-algebras
$\mathcal{A}_{i}\in \mathcal{C}$ with a trace $\rho_i$, any $\ast$-homomorphism $\phi: A \to 
{\displaystyle \prod_{i\in \mathbb N}^{\alpha}}
\left(  \mathcal{A}_{i},\rho_{i}\right)$ is liftable.

Our definition of stability for groups agrees with the definition of tracial stability for $C^*$-algebras in the following sense.

\begin{proposition}
A group $G$ is $\mathcal{C}$-stable iff its full $C^{*}$-algebra
$C^{*}(G)$ is $\mathcal{C}$-tracially stable.
\end{proposition}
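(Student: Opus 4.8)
The plan is to exploit the universal property of $C^{*}(G)$ recalled just above, which gives a bijective correspondence between unitary representations $\pi:G\to\mathcal{U}(A)$ and $\ast$-homomorphisms $\tilde{\pi}:C^{*}(G)\to A$, determined by $\tilde{\pi}(\delta(g))=\pi(g)$. Applying this correspondence both with $A=\prod_{i}^{\alpha}(\mathcal{A}_{i},\rho_{i})$ and with each $A=\mathcal{A}_{i}$ lets one translate the lifting of group homomorphisms into the lifting of $\ast$-homomorphisms and back.

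For the forward implication, assume $G$ is $\mathcal{C}$-stable and take a $\ast$-homomorphism $\phi:C^{*}(G)\to\prod_{i}^{\alpha}(\mathcal{A}_{i},\rho_{i})$. I would set $f(g)=\phi(\delta(g))$; since $\delta(g)$ is unitary and $\phi$ is a $\ast$-homomorphism, $f$ is a homomorphism of $G$ into the unitary group of the ultraproduct. Stability produces homomorphisms $f_{i}:G\to\mathcal{U}(\mathcal{A}_{i})$ with $f(g)=\{f_{i}(g)\}_{\alpha}$, and the universal property promotes each $f_{i}$ to a $\ast$-homomorphism $\tilde{f}_{i}:C^{*}(G)\to\mathcal{A}_{i}$. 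The task is then to verify that $a\mapsto\{\tilde{f}_{i}(a)\}_{\alpha}$ is a well-defined $\ast$-homomorphism lifting $\phi$: well-definedness holds because $\ast$-homomorphisms are contractive, so $\sup_{i}\|\tilde{f}_{i}(a)\|\le\|a\|$ places $(\tilde{f}_{i}(a))_{i}$ in the $C^{*}$-product, and this map agrees with $\phi$ on every generator $\delta(g)$ by construction. The reverse implication is dual: starting from a homomorphism $f:G\to\mathcal{U}(\prod_{i}^{\alpha}(\mathcal{A}_{i},\rho_{i}))$, I pass to the induced $\ast$-homomorphism $\tilde{f}$, lift it to $\ast$-homomorphisms $\phi_{i}:C^{*}(G)\to\mathcal{A}_{i}$ using tracial stability of $C^{*}(G)$, and set $f_{i}(g)=\phi_{i}(\delta(g))$, which is automatically a unitary representation lifting $f$.

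The one step requiring care---and the main obstacle---is concluding that a $\ast$-homomorphism agreeing with $\phi$ on the generators $\delta(g)$ in fact agrees with it on all of $C^{*}(G)$. This rests on the density of $\mathbb{C}G$ (the linear span of the $\delta(g)$) in $C^{*}(G)$ together with the norm-continuity of $\ast$-homomorphisms, which forces two such maps that coincide on the $\delta(g)$ to coincide everywhere. Once this is in place, both directions are a routine transcription through the universal property.
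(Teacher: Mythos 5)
Your proposal is correct and follows essentially the same route as the paper: translate between group representations and $\ast$-homomorphisms via the universal property of $C^{*}(G)$, and use density of $\mathbb{C}G$ together with continuity of $\ast$-homomorphisms to pass from agreement on the $\delta(g)$ to agreement on all of $C^{*}(G)$. The only cosmetic difference is that you apply the universal property coordinatewise to each $f_{i}$ (hence the boundedness check via contractivity), whereas the paper applies it once to the lifted representation $f':G\to\mathcal{U}\bigl(\prod_{i}\mathcal{A}_{i}\bigr)$; both versions are fine.
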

\begin{proof}
Assume $G$ is $\mathcal{C}$-stable and let $\phi: C^*(G) \to %
{\displaystyle \prod_{i\in \mathbb N}^{\alpha}}
\left(  \mathcal{A}_{i},\rho_{i}\right)$ be a $\ast$-homomorphism, for some $\mathcal{A}_{i}\in \mathcal C$. Define  a unitary representation $f: G \to \mathcal U\left(%
{\displaystyle \prod_{i\in \mathbb N}^{\alpha}}
\left(  \mathcal{A}_{i},\rho_{i}\right)\right)$ by $f(g) = \phi(\delta(g)).$ Since $G$ is $\mathcal{C}$-stable, $f$ lifts to a unitary representation $f': G \to \mathcal U \left(\prod_{i\in \mathbb N}\mathcal A_i\right)$.
By the universal property of $C^*(G)$ there exists a $\ast$-homomorphism $\tilde f': C^*(G)\to \prod_{i\in \mathbb N}\mathcal A_i$ such that $\tilde f'(\delta(g)) = f'(g)$, for all $g\in G$. It implies that for any $a\in \mathbb C G$, $\tilde f'(a)$ is a lift of $\phi(a)$. Since $\mathbb C G$ is dense in $C^*(G)$, it implies that $\tilde f'$ is a lift of $\phi$.

Now assume $C^*(G)$ is $\mathcal{C}$-tracially stable and let $f: G \to \mathcal U\left(%
{\displaystyle \prod_{i\in \mathbb N}^{\alpha}}
\left(  \mathcal{A}_{i},\rho_{i}\right)\right)$ be a homomorphism, for some $\mathcal{A}_{i}\in \mathcal C$. By the universal property of $C^*(G)$ there exists a $\ast$-homomorphism $\tilde f: C^*(G) \to
%
{\displaystyle \prod_{i\in \mathbb N}^{\alpha}}
\left(  \mathcal{A}_{i},\rho_{i}\right)$ such that $\tilde f(\delta(g)) = f(g)$, for all $g\in G$. Since $C^*(G)$ is $\mathcal{C}$-tracially stable, we can lift $\tilde f$ to a $\ast$-homomorphism $\psi: C^*(G) \to \prod_{i\in \mathbb N} \mathcal A_i$. Then a homomorphism $f': G \to \mathcal U \left(\prod_{i\in \mathbb N}\mathcal A_i\right)$ defined by $f'(g) = \psi(\delta(g))$ will be a lift of $f$.
\end{proof}

\medskip

Recall that a $C^*$-algebra has {\it real rank zero} (RR0) if each self-adjoint element
can be approximated by self-adjoint elements with finite spectra.

In this paper the role of the class $\mathcal C$ will be played by the class of all matrix $C^*$-algebras, the class of all $II_1$-factors, the class of all von Neumann factors and the class of all $C^*$-algebras of real rank zero.

 Thus we will address matricial stability, $II_1$-factor stability, $W^*$-factor stability and $RR0$-stability for groups respectively. Since every
von Neumann algebra has real rank zero (\cite{BP}),  $RR0$-stability implies $W^*$-factor stability, and  of course $W^*$-stability implies both matricial and $II_1$-factor stability.

\medskip

From now on let $G$ be a discrete countable group.

\begin{theorem}
The classes of
matricially stable groups,  $II_1$-factor stable groups, $W^*$-factor stable groups, and $RR0$-stable groups are closed under finite free products and under the direct product with an abelian group.
\end{theorem}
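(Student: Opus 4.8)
The plan is to treat the two permanence operations separately; in both cases the strategy is to lift a homomorphism into the tracial ultraproduct $M^{\alpha}=\prod_{i\in\mathbb N}^{\alpha}(\mathcal A_i,\rho_i)$ (with $\mathcal A_i\in\mathcal C$) by lifting suitable pieces and recombining them through a universal property, working with the ultraproduct definition of $\mathcal C$-stability. Since all four classes $\mathcal C$ enter the statement in the same way, I would look for arguments that use only stability of the factors together with the relevant universal property, so that one proof serves all four notions at once.

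\textbf{Free products.} Let $G=G_1*\dots*G_n$ with each $G_k$ being $\mathcal C$-stable; by induction it suffices to treat $G=G_1*G_2$. Given a homomorphism $f:G\to\mathcal U(M^{\alpha})$, I restrict it to $f^{(k)}:=f|_{G_k}$. By $\mathcal C$-stability of $G_k$ each $f^{(k)}$ lifts: there are homomorphisms $\phi^{(k)}_i:G_k\to\mathcal U(\mathcal A_i)$ with $f^{(k)}(g)=\{\phi^{(k)}_i(g)\}_\alpha$ for $g\in G_k$. Crucially both liftings use the \emph{same} algebras $\mathcal A_i$, so for each fixed $i$ the universal property of the free product produces a homomorphism $f_i:G_1*G_2\to\mathcal U(\mathcal A_i)$ restricting to $\phi^{(1)}_i$ and $\phi^{(2)}_i$. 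It then remains to verify $f(g)=\{f_i(g)\}_\alpha$ for every $g$: writing $g=s_1\cdots s_m$ as an alternating word and using that multiplication passes to the ultraproduct, $\{f_i(g)\}_\alpha=\prod_j\{f_i(s_j)\}_\alpha=\prod_j f(s_j)=f(g)$. This step is routine and works verbatim for all four classes, since it uses nothing about $\mathcal C$ beyond stability of the factors.

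\textbf{Direct product with an abelian group.} Let $G$ be $\mathcal C$-stable and $A$ abelian, and take $f:G\times A\to\mathcal U(M^{\alpha})$, with $f_G:=f|_G$ and $f_A:=f|_A$ having commuting ranges. First I lift $f_G$ using stability of $G$, obtaining $\phi_i:G\to\mathcal U(\mathcal A_i)$ with $f_G(g)=\{\phi_i(g)\}_\alpha$. The task is then to lift $f_A$ to homomorphisms $\psi_i:A\to\mathcal U(\mathcal A_i)$ with $\{\psi_i(a)\}_\alpha=f_A(a)$ and, for every $i$, with $\psi_i(A)$ commuting \emph{exactly} with $\phi_i(G)$; pairing $\phi_i$ with $\psi_i$ then yields the desired homomorphism $G\times A\to\mathcal U(\mathcal A_i)$ by the universal property of the direct product. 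To produce such $\psi_i$ I would use that $M^{\alpha}$ is a von Neumann algebra and that $f_A(A)$ lies in the relative commutant of $f_G(G)$: starting from arbitrary unitary lifts of the $f_A(a)$, I would push them into the commutant of $\phi_i(G)$ in $\mathcal A_i$ (a von Neumann subalgebra, a multimatrix algebra in the matricial case) by a trace-preserving conditional expectation, correct them to unitaries by polar decomposition, and finally straighten them into an honest abelian homomorphism using stability of $A$ for representations into the commutant, which still carries a faithful trace. In the matricial case the commutant $\phi_i(G)'$ is governed by the isotypic decomposition of $\mathbb C^{n_i}$ under $\phi_i$, and this last step is precisely matricial stability of the abelian group $A$ (established in this paper via the character approximation property) carried out block by block.

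The main obstacle is making the two liftings commute \emph{exactly and simultaneously} over all of $G$ and all of $A$, rather than merely approximately on finitely many elements. Because $G$ may be infinite, no single unitary can be made to almost-commute uniformly with all of $\phi_i(G)$, so the control of $\|E_i(\cdot)-(\cdot)\|_2$ by commutators, which is immediate by averaging when $G$ is finite, must instead be organized through the commutant as a whole; the delicate point is the interchange between the relative commutant $f_G(G)'\cap M^{\alpha}$ of the limit representation and the ultraproduct $\prod_{i}^{\alpha}(\phi_i(G)'\cap\mathcal A_i)$ of the relative commutants. A secondary difficulty is the $RR0$ class, where the approximating algebras $\mathcal A_i$ are $C^*$-algebras admitting no conditional expectation onto a commutant; there I would perform the construction inside the von Neumann ultraproduct $M^{\alpha}$ and re-lift, or reduce to finitely generated $A$ and finitely many generators of $G$ so that the commutation relations become finite in number and uniform, bringing matters back to the finite isotypic argument above.
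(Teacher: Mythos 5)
Your free-product half is correct, and it is in substance the argument the paper relies on: the paper reduces via $C^*(G_1*G_2)\cong C^*(G_1)*C^*(G_2)$ to [Th.~2.7, \cite{MainPaper}], and at the group level that theorem amounts exactly to your split--recombine scheme (for groups one does not even need the corner hypotheses there, since homomorphisms into unitary groups are automatically unital).

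The direct-product half, however, has a genuine gap, precisely at the step you flag but do not close: lifting $f_A$ into the \emph{exact} commutant of a fixed lift $\phi_i(G)$. This fails already for $G=A=\mathbb Z$. Let $U_n\in M_n(\mathbb C)$ be the diagonal unitary whose entries are all the $n$-th roots of unity and let $X_n$ be the cyclic shift, so that $U_nX_n=e^{2\pi i/n}X_nU_n$ and $\|[X_n,U_n]\|_2\le 2\pi/n\to 0$. Then $u=\{U_n\}_\alpha$ and $x=\{X_n\}_\alpha$ commute in the matricial ultraproduct and define a homomorphism of $\mathbb Z^2$, and $\phi_n(k)=U_n^k$ is already an exact lift of $f|_{\mathbb Z\times\{0\}}$. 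But $\{U_n\}'\cap M_n(\mathbb C)$ is the diagonal algebra, the trace-preserving conditional expectation annihilates $X_n$, and $\mathrm{dist}_{\|\cdot\|_2}\left(X_n,\{U_n\}'\right)\ge 1$ for every $n$. Hence $f_A$ admits no lift into $\prod_n^\alpha\left(\phi_n(G)'\cap\mathcal A_n\right)$ whatsoever: the relative commutant of the limit representation is strictly larger than the ultraproduct of relative commutants, and neither conditional expectations nor your proposed reduction to finitely many generators can repair this (exact commutation with generators already implies exact commutation with all of $\phi_i(G)$, so the infinitude of $G$ was never the real obstruction). Any correct proof must perturb the $G$-lift together with the $A$-lift.

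That is what the paper's argument does (via [Prop.~2.9, \cite{MainPaper}]; the same scheme is carried out explicitly in the proofs of Theorems \ref{1-relator1} and \ref{1-relator2}), and it runs in the opposite order from yours: given finitely many elements of $A$ and $\varepsilon>0$, approximate $f_A$ in $\|\cdot\|_2$ by $\sum_{k=1}^s\chi_k(\cdot)\,p_k$, where $p_1,\ldots,p_s$ are spectral projections of the abelian von Neumann algebra $f_A(A)''$ (automatically lying in $f_G(G)'$, since the ultraproduct is a von Neumann algebra) and $\chi_k\in\hat A$ are characters; lift $p_1,\ldots,p_s$ \emph{exactly} to projections $P_{i,k}$ with $\sum_k P_{i,k}=1$ (as in Lemma \ref{projections}); identify each corner $p_kM^\alpha p_k$ with the ultraproduct of the corners $P_{i,k}\mathcal A_iP_{i,k}$ and lift the compressed homomorphism $g\mapsto f_G(g)p_k$ there by stability of $G$ --- this is where closure of all four classes under unital corners enters, and it also removes your $RR0$ worry, since no conditional expectation is ever used; then $\pi_i(g,a)=\sum_k\chi_k(a)\pi_{i,k}(g)$ is an exact homomorphism of $G\times A$ because the scalars $\chi_k(a)$ commute with everything, and Lemma \ref{LimitOfLiftable} (pointwise $\|\cdot\|_2$-limits of liftable homomorphisms are liftable) finishes the proof. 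Your proposal has no analogue of this approximation lemma and insists on a one-shot exact lift relative to a fixed $\phi_i$, which is exactly what the Weyl-pair example above rules out.
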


\begin{proof}
This follows from [Th. 2.7 and Prop. 2.9 in \cite{MainPaper}]. (In fact Th.2.7 in \cite{MainPaper} is proved for the class of $\mathcal C$-tracially stable $C^*$-algebras, where the class $\mathcal C\subseteq RR0$ is closed under direct sums and unital corners, however for our proof it is sufficient that $\mathcal C$ is closed only under unital corners, and thus the theorem applies for matricial, $II_1$-factor and $W^*$-factor stability too).
\end{proof}

\medskip

Of course besides $W^*$-factor stability one also can introduce $W^*$-stability meaning liftings from tracial ultraproducts of (not necessarily factorial) von Neumann algebras.
In general we don't know if $W^*$-factor stability coincides with $W^*$-stability. However if a group is finitely presented, then  they coincide as we show below.
All necessary information about direct integrals and measurable cross-sections can be found in \cite{Arveson}.

\begin{theorem}\label{W*-stable} Let $G$ be a finitely presented group. Then $G$ is $W^*$-factor stable if and only if it is $W^*$-stable.
\end{theorem}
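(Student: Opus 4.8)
The plan is as follows. One implication is essentially free: every von Neumann factor is in particular a finite von Neumann algebra with a trace, so a lifting problem for a tracial ultraproduct built out of factors is a special case of one built out of arbitrary von Neumann algebras. Hence $W^*$-stability trivially implies $W^*$-factor stability. The content is the reverse implication, and for it I would first pass to the quantitative ($\epsilon$--$\delta$) description of stability. Since $G$ is finitely presented, the first Proposition of this section lets me replace both notions by their $\epsilon$--$\delta$ forms: $W^*$-factor stability becomes the assertion that for every $\epsilon>0$ there is $\delta>0$ so that any $\delta$-almost homomorphism $S\to\mathcal U(\mathcal F)$ into a finite factor $\mathcal F$ lies $\epsilon$-close to a genuine homomorphism, and $W^*$-stability is the same assertion with $\mathcal F$ replaced by an arbitrary finite von Neumann algebra $\mathcal A$ with a trace. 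So it suffices to upgrade the factor $\epsilon$--$\delta$ property to the general one. Given $\epsilon>0$, let $\delta_0=\delta_0(\epsilon/2)$ be the constant supplied by factor stability. Given $(\mathcal A,\rho)$ and a $\delta$-almost homomorphism $f$, I would restrict to the separably acting von Neumann subalgebra generated by $f(S)$ and reduce to the case that $\rho$ is a faithful normal tracial state, so that the decomposition theory of \cite{Arveson} applies; note that a homomorphism into this subalgebra is still a homomorphism into $\mathcal A$.

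Next I would decompose $\mathcal A$ over its center as a direct integral of factors, $\mathcal A=\int_X^\oplus \mathcal A_x\,d\mu(x)$ with $\mu$ a probability measure and $\rho=\int_X^\oplus \rho_x\,d\mu(x)$, where each $\rho_x$ is the normalized trace on the factor $\mathcal A_x$; since $\mathcal A$ is finite, every $\mathcal A_x$ is of type $I_n$ or $II_1$, i.e. exactly a finite factor to which factor stability applies with the uniform constant $\delta_0$. Disintegrating the generators, $f(g_k)=\int_X^\oplus f_x(g_k)\,d\mu(x)$ with $f_x(g_k)\in\mathcal U(\mathcal A_x)$, the almost-relation for each $j$ reads
\[
\delta^2\ \ge\ \|1-r_j(f(g_1),\dots,f(g_s))\|_{2,\rho}^2\ =\ \int_X \|1-r_j(f_x(g_1),\dots,f_x(g_s))\|_{2,\rho_x}^2\,d\mu(x).
\]
Summing over $j=1,\dots,l$ and applying Chebyshev's inequality to the bad set
\[
B=\{\,x\in X:\ \max_{j}\|1-r_j(f_x(g_1),\dots,f_x(g_s))\|_{2,\rho_x}>\delta_0\,\},\qquad \mu(B)\le \frac{l\,\delta^2}{\delta_0^2}.
\]
For $x\notin B$ the tuple $f_x$ is a $\delta_0$-almost homomorphism into the finite factor $\mathcal A_x$, so factor stability provides a genuine homomorphism $\pi_x:G\to\mathcal U(\mathcal A_x)$ with $\max_k\|f_x(g_k)-\pi_x(g_k)\|_{2,\rho_x}\le \epsilon/2$.

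The crucial step, and the one I expect to be the main obstacle, is to choose the $\pi_x$ \emph{measurably} in $x$, so that $x\mapsto(\pi_x(g_1),\dots,\pi_x(g_s))$ is a measurable field and $\pi=\int_X^\oplus \pi_x\,d\mu$ is a well-defined field of unitaries on $\mathcal A$. Here finite presentation is decisive: a homomorphism of $G$ is just an $s$-tuple of unitaries satisfying the finitely many closed conditions $r_j(v_1,\dots,v_s)=1$ (and the symmetry $v_k=v_{k'}^{-1}$), so for each $x$ the set of admissible tuples lying within $\epsilon/2$ of $(f_x(g_1),\dots,f_x(g_s))$ is a weakly closed, measurably varying, and (for $x\notin B$) nonempty subset of the unit ball of $\mathcal A_x^{\,s}$. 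A measurable selection theorem for such fields (in the sense of \cite{Arveson}) then yields the desired measurable choice; on $B$ I would simply set $\pi_x(g)=1$. The integrated map $\pi=\int_X^\oplus \pi_x\,d\mu$ is then a genuine homomorphism $G\to\mathcal U(\mathcal A)$, since the relations hold fiberwise a.e., and
\[
\|f(g_k)-\pi(g_k)\|_{2,\rho}^2=\int_X\|f_x(g_k)-\pi_x(g_k)\|_{2,\rho_x}^2\,d\mu(x)\le \Big(\tfrac{\epsilon}{2}\Big)^2+4\,\mu(B)\le \Big(\tfrac{\epsilon}{2}\Big)^2+\frac{4\,l\,\delta^2}{\delta_0^2},
\]
using $\|f_x(g_k)-\pi_x(g_k)\|_{2,\rho_x}\le 2$ on $B$. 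Choosing $\delta\le \epsilon\,\delta_0/(4\sqrt{l})$ makes the right-hand side at most $\epsilon^2$, giving $\|f(g_k)-\pi(g_k)\|_{2,\rho}\le\epsilon$ for all $k$. This establishes the general $\epsilon$--$\delta$ property, hence $W^*$-stability. The only delicate points to watch are the reduction to a faithful normal trace on a separably acting algebra (so the direct integral machinery and the measurable selection are legitimate) and the measurability of the fiberwise choice of homomorphisms, which is where I expect the real work to be.
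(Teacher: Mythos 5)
Your proposal is correct and follows essentially the same route as the paper's own proof: pass to the $\epsilon$--$\delta$ formulation (legitimate by finite presentation), reduce to a faithful trace on a separably acting algebra, disintegrate over the center into finite factors, cut out a bad set of small measure by a Chebyshev estimate, apply factor stability fiberwise with the trivial homomorphism on the bad set, and invoke measurable cross-section theorems in the sense of \cite{Arveson} to integrate the fiberwise homomorphisms into $\pi=\int_\Omega^\oplus \pi_\omega\,d\mu$. The only differences are cosmetic bookkeeping of constants (your $\delta\le \epsilon\,\delta_0/(4\sqrt{l}\,)$ versus the paper's $\varepsilon/37$ scheme).
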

\begin{proof} We will give a proof for a group presented by one relation, because for finitely many relations it is absolutely similar.
So let  $G =\langle x_1, \ldots, x_s\;|\; \phi(x_1, \ldots, x_s) = 1\rangle$.
The "if" part is obvious, so let us assume that $G$ is $W^*$-factor stable. Then
for any $ \varepsilon>0$ there exists $\delta_0>0$ such that, for all factors $\left(
\mathcal{M},\tau \right)  $, for all $y_{1},\ldots,y_{s}%
\in \mathcal U(\mathcal{M})$ we have that if $\left \Vert \varphi \left(  y_{1}%
,\ldots,y_{s}\right) -1 \right \Vert _{2,\tau}<\delta_0,$ there is a homomorphism $\pi:G\rightarrow \mathcal U(M)$ such that%
\begin{equation}\label{2W*-stable}
\sum_{k=1}^{s}\left \Vert y_{k}-\pi \left(  x_{k}\right)  \right \Vert _{2,\tau
}^{2}<\varepsilon/37.
\end{equation}
We are going to prove that then for any $ \varepsilon>0$ there exists \begin{equation}\label{3W*-stable}\delta := \sqrt{\delta_0^2\frac{\epsilon}{37s}}\end{equation} such that, for all von Neumann algebras $\left(
\mathcal{M},\tau \right)  $, for all $y_{1},\ldots,y_{s}%
\in \mathcal U(\mathcal{M})$ we have that if $\left \Vert \varphi \left(  y_{1}%
,\ldots,y_{s}\right) -1 \right \Vert _{2,\tau}<\delta,$ there is a homomorphism $\pi:G\rightarrow \mathcal U(M)$ such that%
\[
\sum_{k=1}^{s}\left \Vert y_{k}-\pi \left(  x_{k}\right)  \right \Vert _{2,\tau
}^{2}<\varepsilon.
\]
So let  $\left(
\mathcal{M},\tau \right)  $ be a von Neumann algebra,  $y_{1},\ldots,y_{s}%
\in \mathcal U(\mathcal{M})$,
\begin{equation}\label{1W*-stable}\left \Vert \varphi \left(  y_{1}%
,\ldots,y_{s}\right) -1 \right \Vert _{2,\tau}<\delta.
\end{equation} Without loss of generality we can assume that $\tau$ is faithful and also we can replace $\mathcal M$
  with $W^{\ast}\left(  y_{1},\ldots,y_{s}\right)
$, so we can assume $\mathcal{M}=W^{\ast}\left(  y_{1},\ldots,y_{s}\right)  $.
Then $\mathcal{M}$ acts faithfully on $L^{2}\left(  W^{\ast}\left(
y_{1},\ldots,y_{s}\right)  ,\tau \right)  $, which is a separable Hilbert
space. Thus we can write%
\[
\mathcal{M}=\int_{\Omega}^{\oplus}\mathcal{M}_{\omega}d\mu \left(
\omega \right)
\]
for some probability space $\left(  \Omega,\mu \right)  $, where each
$\mathcal{M}_{\omega}$ is a factor von Neumann algebra with a unique faithful
normal tracial state $\tau_{\omega}$, and such that, for every $y=\int
_{\Omega}^{\oplus}y\left(  \omega \right)  d\mu \left(  \omega \right)
\in \mathcal{M}$, we have%
\[
\tau \left(  y\right)  =\int_{\Omega}\tau_{\omega}\left(  y\left(
\omega \right)  \right)  d\mu \left(  \omega \right)  \text{.}%
\]
Hence%
\[
\left \Vert y\right \Vert _{2,\tau}^{2}=\tau \left(  y^{\ast}y\right)
=\int_{\Omega}\left \Vert y\left(  \omega \right)  \right \Vert _{2,\tau_{\omega
}}^{2}d\mu \left(  \omega \right)  .
\]
Let $$E=\left \{  \omega \in \Omega:\left \Vert \varphi \left(  y_{1}\left(
\omega \right)  ,\ldots,y_{s}\left(  \omega \right)  \right) -1 \right \Vert
_{2,\tau_{\omega}}\geq \delta_{0}\right \}.$$ Then
\begin{multline*}\left \Vert \varphi \left(  y_{1}%
,\ldots,y_{s}\right) -1 \right \Vert _{2,\tau}^2 = \int_{\Omega} \left \Vert \varphi \left(  y_{1}\left(
\omega \right)  ,\ldots,y_{s}\left(  \omega \right)  \right) -1 \right \Vert
_{2,\tau_{\omega}}^2 d\mu(w) \\ \ge \int_{E} \left \Vert \varphi \left(  y_{1}\left(
\omega \right)  ,\ldots,y_{s}\left(  \omega \right)  \right) -1 \right \Vert
_{2,\tau_{\omega}}^2 d\mu(w) \ge \delta_0^2\mu(E).\end{multline*}
Using (\ref{2W*-stable}) and (\ref{3W*-stable}), it follows that
 $$\mu \left(  E\right)
\leq \frac{1}{\delta_{0}^{2}}\left \Vert \varphi \left(  y_{1}  ,\ldots,y_{s}  \right) -1 \right \Vert
_{2,\tau}^2   <\frac{\varepsilon}{37s}.$$
 For each $\omega \in E$, we define $\pi_{\omega}: G\rightarrow
\mathcal U(\mathcal{M}_{\omega})$ by $\pi_{\omega}\left(  g\right)  = 1$. Then, for $\omega \in E$,
\[
\sum_{k=1}^{s}\left \Vert y_{k}\left(  \omega \right)  -\pi_{\omega}\left(
x_{k}\right)  \right \Vert _{2,\tau_{\omega}}^{2}\leq \sum_{k=1}^{s}4=4s.
\]
Hence
\[
\int_{E}\sum_{k=1}^{s}\left \Vert y_{k}\left(  \omega \right)  -\pi_{\omega
}\left(  x_{k}\right)  \right \Vert _{2,\tau_{\omega}}^{2}\leq4s\mu \left(
E\right)  <\frac{4\varepsilon}{37}.
\]
By $W^*$-factor stability of $G$, (\ref{2W*-stable}), for each $\omega \in \Omega \backslash E,$ there is a representation $\pi_{\omega}%
:G\rightarrow \mathcal U(\mathcal{M}_{\omega})$ so that
\[
\sum_{k=1}^{s}\left \Vert y_{k}-\pi_{\omega}\left(  x_{k}\right)  \right \Vert
_{2,\tau_{\omega}}^{2}<\varepsilon/37
\]

Standard measurable cross-section theorems allow us to choose $\pi_{\omega}$
so that, for every $g\in G$, the map $g\mapsto \pi_{\omega}\left(
g\right)  $ is weak* measurable. Define a representation $\pi: G\rightarrow
\mathcal U(\mathcal{M})$ by%
\[
\pi \left(  g\right)  =\int_{\Omega}^{\oplus}\pi_{\omega}\left(  g\right)
d\mu \left(  \omega \right)  .
\]
Then%
\[
\sum_{k=1}^{s}\left \Vert y_{k}-\pi \left(  x_{k}\right)  \right \Vert _{2,\tau
}^{2}=\sum_{k=1}^{s}\int_{\Omega}\left \Vert y_{k}\left(  \omega \right)
-\pi_{\omega}\left(  x_{k}\right)  \right \Vert _{2,\tau}^{2}=
\]%
\[
\sum_{k=1}^{s}\int_{E}\left \Vert y_{k}\left(  \omega \right)  -\pi_{\omega
}\left(  x_{k}\right)  \right \Vert _{2,\tau}^{2}+\sum_{k=1}^{s}\int
_{\Omega \backslash E}\left \Vert y_{k}\left(  \omega \right)  -\pi_{\omega
}\left(  x_{k}\right)  \right \Vert _{2,\tau}^{2}\leq
\]%
\[
4\varepsilon/37+\int_{\Omega \backslash E}\varepsilon/37d\mu \left(
\omega \right)  <\varepsilon.
\]

\end{proof}

\noindent {\bf Remark.} \textrm{Using noncommutative continuous functions \cite{DonNCfunctions}, one can rewrite this proof to show that any finitely generated $C^*$-algebra which has a unital 1-dimensional representation is $W^*$-factor tracially stable if and only if it is $W^*$ tracially stable. In particular Theorem \ref{W*-stable} holds for any finitely generated group, not necessarily finitely presented.}

\section{A necessary condition for matricial stability and a characterization of matricial stability for amenable groups}

Recall that a \textit{character} of a group $G$ is a positive definite
function on $G$ which is constant on conjugacy classes and takes value 1 at
the unit.

We will say that a character $\tau$ is {\it embeddable} if it factorizes through a homomorphism to a tracial ultraproduct of matrices,
that is if there is a non-trivial ultrafilter $\alpha$ on $\mathbb{N}$ and a
homomorphism $f: G \to \mathcal U\left( %
{\displaystyle \prod_{n\in \mathbb{N}}^{\alpha}}
\left(  \mathcal{M}_{n}(\mathbb C),tr_{n}\right)\right)  $ such that $\tau_{\alpha}\circ f =\tau$.

This definition is analogous to the definition of embeddable trace on a $C^*$-algebra (see \cite{MainPaper}).
On an amenable group every character is embeddable.
If Connes' embedding conjecture holds, then on any group every character is embeddable.

\medskip

The following easy statement gives a necessary condition for matricial stability.

\begin{theorem}
\label{NecessaryConditionGroups} If $G$ is matricially stable, then
each embeddable character of $G$ is a pointwise limit of traces of finite-dimensional representations.
\end{theorem}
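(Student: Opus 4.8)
The plan is to unwind the definitions and exhibit, for an arbitrary embeddable character $\tau$, an explicit sequence of finite-dimensional representations whose traces converge pointwise to $\tau$. Since $\tau$ is embeddable, fix a nontrivial ultrafilter $\alpha$ on $\mathbb{N}$ and a homomorphism $f: G \to \mathcal U\left(\prod_{n\in\mathbb N}^\alpha (M_n(\mathbb C), tr_n)\right)$ with $\tau_\alpha \circ f = \tau$. The key idea is that $f$ is literally a homomorphism into a unitary group of a tracial ultraproduct, so it is exactly the kind of object that matricial stability controls. First I would invoke the lifting argument already used in the proof of Proposition (equality of the two definitions of stability): each $f(g)$ can be lifted to a genuine unitary $\{g(n)\}_\alpha$ in $\prod_{n} M_n(\mathbb C)$, and the relations $r_j$ are satisfied exactly in the ultraproduct, hence asymptotically along $\alpha$ in the matrix norm $\|\cdot\|_{2,tr_n}$.

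Next I would apply matricial stability. Because each relation $\|r_j(g_1(n),\dots,g_s(n)) - 1\|_{2,tr_n} \to 0$ along $\alpha$, the stability hypothesis supplies, for $\alpha$-almost every $n$ (more precisely, on a decreasing chain of sets in $\alpha$ shrinking the tolerance), genuine group homomorphisms $\pi_n: G \to \mathcal U(M_n(\mathbb C))$ with $\max_k \|g_k(n) - \pi_n(g_k)\|_{2,tr_n}$ as small as we like. These $\pi_n$ are honest finite-dimensional unitary representations of $G$, so each $tr_n \circ \pi_n$ is a trace of a finite-dimensional representation. The goal is then to show that along $\alpha$ these traces converge to $\tau$.

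For the convergence I would estimate, for a fixed $g \in G$, the difference $|tr_n(\pi_n(g)) - tr_n(g(n))|$. Writing $g$ as a word $r(g_1,\dots,g_s)$ in the generators and using that $\pi_n$ is a homomorphism, the quantity $\|\pi_n(g) - g(n)\|_{2,tr_n}$ is controlled by the generator-wise $\|\cdot\|_{2,tr_n}$ discrepancies via the standard telescoping/submultiplicativity estimate for unitaries ($\|u_1\cdots u_m - v_1\cdots v_m\|_2 \le \sum_k \|u_k - v_k\|_2$), and the Cauchy--Schwarz bound $|tr_n(a)| \le \|a\|_{2,tr_n}$ turns this into a bound on the trace discrepancy. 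Thus along $\alpha$ we get $tr_n(\pi_n(g)) \to \lim_\alpha tr_n(g(n)) = \tau_\alpha(f(g)) = \tau(g)$, pointwise in $g$. Extracting an ordinary sequence indexed by $\mathbb{N}$ that converges pointwise then requires a standard diagonalization over a countable exhaustion of $G$, using that $\alpha$ contains arbitrarily deep sets in the chosen chain.

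The main obstacle I anticipate is bookkeeping around the ultrafilter: stability gives control only on $\alpha$-large sets, and for different words $g$ the good index sets differ, so converting ultrafilter-limits into an honest pointwise-convergent sequence over the countable group $G$ is the delicate point. I would handle this exactly as in the earlier proposition's proof, choosing a decreasing sequence $E_1 \supset E_2 \supset \cdots$ in $\alpha$ with empty intersection and defining the approximating representations on the successive differences $E_n \setminus E_{n+1}$, with tolerances tied to an enumeration of $G$ so that each fixed $g$ is eventually controlled. The relational estimates themselves are routine; the care lies entirely in interleaving the stability tolerances with the ultrafilter structure.
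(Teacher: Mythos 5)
Your proposal is correct in substance, but it takes the long way around and, as written, only covers finitely presented groups. The theorem is stated for an arbitrary countable discrete group, and for such groups the paper's operative definition of matricial stability is exactly the ultraproduct-lifting property: every homomorphism $f: G \to \mathcal U\left(\prod_{n\in\mathbb N}^{\alpha}(M_n(\mathbb C), tr_n)\right)$ lifts to honest homomorphisms $f_n: G \to \mathcal U(M_n(\mathbb C))$ with $f(g) = \{f_n(g)\}_{\alpha}$ for \emph{every} $g\in G$. The paper's proof therefore has only two steps: apply this definition to the embedding homomorphism $f$, so that $\tau(g) = \tau_{\alpha}(f(g)) = \lim_{\alpha} tr_n(f_n(g))$ holds for all $g$ with no estimates whatsoever (the lift is exact on every group element, not merely approximate on generators), and then extract an honest subsequence $n_1 < n_2 < \cdots$ by enumerating the countable group and, at stage $k$, intersecting the finitely many $\alpha$-large sets $\{n : |\tau(g_i) - tr_n(f_n(g_i))| < 2^{-k}\}$, $i \le k$, with $\{n : n > n_{k-1}\}$. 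Your route --- lifting the generators to unitaries via $C(\mathbb T)$, invoking the $\epsilon$-$\delta$ form of stability on a shrinking chain in $\alpha$, and then controlling traces of arbitrary words by telescoping and Cauchy--Schwarz --- re-derives inline the Proposition that the two definitions coincide, and this presupposes a finite presentation $\langle g_1,\dots,g_s \mid r_1,\dots,r_l\rangle$; moreover your telescoping bound must be interleaved with the enumeration of $G$ so that at stage $k$ the stability tolerance beats the word lengths of $g_1,\dots,g_k$, a workable but avoidable bookkeeping burden. The repair for general $G$ is simply to quote the ultraproduct definition directly, at which point your unitary-lifting step and trace estimates become unnecessary and what remains of your argument --- the diagonalization over a countable enumeration --- coincides with the paper's subsequence extraction.
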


\begin{proof}
Let $\tau$ be an embeddable character on $G$. Then there is a non-trivial ultrafilter $\alpha$ on $\mathbb{N}$ and a
homomorphism $f: G \to \mathcal U\left( %
{\displaystyle \prod_{n\in \mathbb{N}}^{\alpha}}
\left(  \mathcal{M}_{n}(\mathbb C),tr_{n}\right)\right)  $ such that
\begin{equation}\label{EmbTrace}\tau_{\alpha}\circ f =\tau.\end{equation}
By matricial stability of $G$, there exists homomorphisms $f_n: G \to \mathcal U \left(\mathcal{M}_{n}(\mathbb C)\right)$ such that $f(g) = \{f_n(g)\}_{\alpha}$. Together with (\ref{EmbTrace}) it  implies that
\begin{equation}\label{LimitTrace}\tau(g) =\lim_{\alpha} tr_n (f_n(g)),\end{equation} for all $g\in G$. It easily implies that there is a subsequence $n_j$ such that \begin{equation}\label{LimitTrace2}\tau(g) =\lim_{j\to \infty} tr_{n_j} (f_{n_j}(g)),\end{equation} for all $g\in G$. Indeed, since $G$ is countable, we list all its elements as $g_1, g_2, \ldots$ and then by (\ref{LimitTrace}) the set
 $\{n\in \mathbb N\;|\; |\tau(g_1) - tr_n(f_n(g_1))|<1/2\}$ is in $\alpha$ and hence is not empty. So there is $n_1$ such that $$|\tau(g_1) - tr_{n_1}(f_{n_1}(g_1))|<1/2.$$ We continue inductively.
 Suppose $n_1< n_2< \ldots < n_{k-1}$ such that $$|\tau(g_i) - tr_{n_l}(f_{n_l}(g_i))|<\frac{1}{2^l},$$ $i = 1, \ldots, l$, $l = 2, \ldots, k-1$,  are already found. The set
 \begin{multline*}\{n\in \mathbb N\;|\; n> n_{k-1}, \; |\tau(g_i) - tr_{n}(f_{n}(g_i))|<\frac{1}{2^k}, \; i = 1, \ldots, k\}\\ =  \{n\in \mathbb N\;|\; n> n_{k-1}\} \bigcap \left(\bigcap_{i\le k} \{n\in \mathbb N\;|\; |\tau(g_i) - tr_{n}(f_{n}(g_i))|<\frac{1}{2^k}\}\right)\end{multline*} is in $\alpha$ and hence is not empty. Thus there is $n_k> n_{k-1}$ such that $$|\tau(g_i) - tr_{n_k}(f_{n_k}(g_i))|<\frac{1}{2^k},$$ $i = 1, \ldots, k$.

 Now the statement follows from (\ref{LimitTrace2}).
\end{proof}

The next 2 statements are corollaries of our results in \cite{MainPaper}. The first of them gives  a complete characterization of matricial stability and of $W^*$-factor stability for amenable groups. $II_1$-factor stability is automatic for amenable groups.

\begin{theorem}
\label{amenablegroup} Let $G$ be an amenable group. The following are equivalent:

\begin{enumerate}
\item $G$ is matricially stable

\item $G$ is $W^*$-factor  stable.

\item Each character of $G$ is a pointwise limit of traces of finite-dimensional representations.
\end{enumerate}
\end{theorem}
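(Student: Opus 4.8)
The plan is to pass to the $C^*$-algebra picture and prove the cycle of implications $(2)\Rightarrow(1)\Rightarrow(3)$ together with $(3)\Rightarrow(2)$, exploiting that $C^*(G)$ is nuclear for amenable $G$ and that its tracial states are precisely the characters of $G$. By the Proposition identifying group stability with tracial stability of $C^*(G)$, matricial (resp. $W^*$-factor) stability of $G$ becomes matricial (resp. $W^*$-factor) tracial stability of $C^*(G)$, and condition (3) becomes the statement that every tracial state on $C^*(G)$ is a weak$^*$ limit of traces of finite-dimensional representations.

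The two easy implications I would dispatch first. For $(2)\Rightarrow(1)$, every full matrix algebra $M_n(\mathbb C)$ is a finite von Neumann factor, so a tracial ultraproduct of matrices is a special case of a tracial ultraproduct of factors; hence liftability for all factor ultraproducts forces liftability for matrix ultraproducts, i.e.\ $W^*$-factor stability implies matricial stability. For $(1)\Rightarrow(3)$, I would invoke Theorem \ref{NecessaryConditionGroups}: matricial stability makes every \emph{embeddable} character a pointwise limit of traces of finite-dimensional representations, and since $G$ is amenable every character is embeddable, so (3) follows.

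For $(1)\Rightarrow(2)$ I would use a type decomposition of the factor ultraproduct. Given a homomorphism $f\colon G\to\mathcal U\left(\prod_{i\in\mathbb N}^{\alpha}(\mathcal M_i,\rho_i)\right)$ with each $\mathcal M_i$ a factor carrying a tracial state, each $\mathcal M_i$ is finite and so is either a matrix algebra $M_{n_i}(\mathbb C)$ or a $II_1$ factor; since $\alpha$ is an ultrafilter, exactly one of these two index sets lies in $\alpha$, so after discarding the $\alpha$-null set the ultraproduct is either a matrix ultraproduct or a $II_1$-factor ultraproduct. In the first case matricial stability (1) supplies the lift, and in the second case the lift exists because $II_1$-factor stability is automatic for amenable groups. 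Defining the components of the lift arbitrarily off the relevant $\alpha$-set yields the desired global lift, proving $W^*$-factor stability.

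The heart of the matter is $(3)\Rightarrow(1)$. Here I would appeal to the characterization of matricial tracial stability for nuclear $C^*$-algebras obtained in \cite{MainPaper}: since $C^*(G)$ is nuclear, every $\ast$-homomorphism $\phi\colon C^*(G)\to\prod_{i\in\mathbb N}^{\alpha}(M_{n_i}(\mathbb C),tr_{n_i})$ induces the tracial state $\tau_\alpha\circ\phi$, which by (3) is a weak$^*$ limit of finite-dimensional traces, and this approximation is exactly the input required to produce a lift. The main obstacle is precisely this step: upgrading a weak invariant, namely approximation of the \emph{trace}, into a genuine lift of the \emph{homomorphism}. What makes it go through is nuclearity of $C^*(G)$ together with the structure of the tracial ultraproduct: for amenable $G$ the von Neumann algebra generated by $\phi(C^*(G))$ is injective, so the finite-dimensional models furnished by (3) can be assembled, along the ultrafilter, into an honest lift. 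I would organize this last step as a direct citation of the relevant theorem of \cite{MainPaper} applied to $A=C^*(G)$, verifying only that its hypotheses reduce to amenability of $G$ and to condition (3).
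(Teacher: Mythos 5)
Your proposal is correct, and at its crux it coincides with the paper's proof: both reduce the statement to tracial stability of $C^{*}(G)$ via the correspondence between characters of $G$ and tracial states on $C^{*}(G)$, and both delegate the hard work to the nuclear characterization in \cite{MainPaper}. The paper, however, does this in one stroke: after noting that condition (3) translates into approximation of every tracial state on $C^{*}(G)$ by traces of finite-dimensional representations, and that $C^{*}(G)$ has a one-dimensional representation, it cites [Theorem 3.8, \cite{MainPaper}] for the entire three-way equivalence at once (nuclearity of $C^{*}(G)$, equivalent to amenability of $G$, being the standing hypothesis there). You instead re-derive two of the implications with in-paper tools: $(1)\Rightarrow(3)$ from Theorem \ref{NecessaryConditionGroups} plus the paper's remark that every character of an amenable group is embeddable --- exactly the intended use of that theorem --- and $(1)\Rightarrow(2)$ by a type dichotomy. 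That dichotomy is sound: a von Neumann factor admitting any tracial state cannot be properly infinite (two isometries with orthogonal ranges rule out even non-normal tracial states), so each $\mathcal{M}_{i}$ is a matrix algebra or a $II_{1}$-factor, the ultrafilter selects one of the two index sets, the restricted ultraproduct is canonically the original one, and a lift on an $\alpha$-large set extends arbitrarily off it. Two caveats temper the gain in self-containedness. First, your $(1)\Rightarrow(2)$ leans on the assertion that $II_{1}$-factor stability is automatic for amenable groups; this is stated in the paper but is itself an unproved import from \cite{MainPaper} (injectivity of the generated von Neumann algebra plus a lifting theorem), so your net reliance on \cite{MainPaper} is comparable to the paper's. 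Second, your sketch of $(3)\Rightarrow(1)$ --- assembling finite-dimensional models along the ultrafilter using injectivity --- is only a heuristic for what [Theorem 3.8, \cite{MainPaper}] actually proves, and you rightly end by citing it rather than attempting the assembly; as a citation-based step it is fine, but it is not an independent argument. What your organization buys is transparency: the only black-box use of \cite{MainPaper} for a genuinely hard direction is $(3)\Rightarrow(1)$, while the remaining implications are exhibited at the group level, including an explicit explanation --- absent from the paper's one-line proof --- of why factor ultraproducts reduce to the matricial and $II_{1}$ cases.
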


\begin{proof}
 As
is well known, a positive definite function on $G$ extends in unique way to a
state on $C^{*}(G)$ (see e.g. \cite{Davidson}, p.188), and it is obvious
that a positive definite function is constant on conjugacy classes if and only
if the corresponding state is a trace. Thus (embeddable) characters of $G$ are in 1-to-1 correspondence with (embeddable) tracial states on $C^*(G)$ and the condition (3) is equivalent to the condition that for each tracial state $\tau$ on $C^*(G)$ there are finite-dimensional representations $\pi_n$ of $C^*(G)$ such that $$\tau(a) = \lim_{n\to \infty} tr \pi_n(a),$$ for each $a\in C^*(G)$.
Since for any group $G$, $C^*(G)$ has a one-dimensional representation, the statement follows from [Theorem 3.8, \cite{MainPaper}].
\end{proof}

\begin{theorem}\label{VirtAb}
The class of $W^{*}$-factor stable groups contains all virtually
abelian groups.
\end{theorem}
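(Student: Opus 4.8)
The plan is to reduce the statement to the character approximation property of Theorem \ref{amenablegroup} and then establish that property using the fact that virtually abelian discrete groups are type I. First, every virtually abelian group is amenable (abelian groups are amenable, and amenability passes to finite extensions), so by Theorem \ref{amenablegroup} it suffices to verify condition (3) for $G$: that each character $\tau$ of $G$ is a pointwise limit of normalized traces of finite-dimensional representations.

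The key structural input is Thoma's theorem: a discrete group is of type I if and only if it contains an abelian subgroup of finite index; hence our $G$ is type I. I would first handle the \emph{extremal} characters. Let $\tau$ be an extremal character, equivalently an extreme point of the simplex of tracial states on $C^{*}(G)$, which is metrizable since $G$ is countable. Its GNS representation $\pi_{\tau}$ generates a factor $M=\pi_{\tau}(\mathbb{C}G)''$, on which $\tau$ is a faithful normal trace (the cyclic trace vector is separating). Because $G$ is type I, $M$ is a type I factor, hence isomorphic to $B(H)$; but $B(H)$ admits a faithful normal finite trace only when $\dim H<\infty$. Therefore $M\cong M_{n}(\mathbb{C})$, the representation $\pi_{\tau}$ is a finite-dimensional irreducible representation, and $\tau=tr_{n}\circ\pi_{\tau}$ is precisely the normalized trace of a finite-dimensional representation.

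It then remains to pass from extremal characters to arbitrary ones. By Choquet's theorem (applicable since the trace simplex is metrizable) every character $\tau$ is the barycenter of a probability measure $\mu$ on the extreme characters, so that $\tau(g)=\int \sigma(g)\,d\mu(\sigma)$ for all $g\in G$. I would approximate $\mu$ weak-$*$ by finitely supported measures with rational weights; since each evaluation $\sigma\mapsto\sigma(g)$ is continuous and bounded on the compact character space, the corresponding barycenters converge to $\tau$ pointwise on $G$. Finally, a rational convex combination $\sum_{i}\lambda_{i}\,(tr_{d_{i}}\circ\rho_{i})$ of normalized traces of finite-dimensional irreducible representations $\rho_{i}$ is itself the normalized trace of the finite-dimensional representation $\bigoplus_{i}\rho_{i}^{\oplus m_{i}}$ for multiplicities $m_{i}$ chosen so that $m_{i}d_{i}/\sum_{j}m_{j}d_{j}=\lambda_{i}$ (clear denominators). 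Hence $\tau$ is a pointwise limit of traces of finite-dimensional representations, which is condition (3), and the theorem follows.

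The main obstacle is the structural fact underlying the second step: one must know that the factors arising from extremal characters are \emph{finite-dimensional}, and this is exactly where type I-ness, i.e. Thoma's theorem, is essential. Without it the GNS factor could be a $II_{1}$-factor carrying a faithful normal trace, and the conclusion would break down. The remaining Choquet-theoretic decomposition and direct-sum realization are routine bookkeeping, needing only the countability of $G$ to guarantee metrizability of the trace simplex.
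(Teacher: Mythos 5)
Your proof is correct, but it takes a genuinely different route from the paper. The paper's proof is two lines: it cites Thoma's theorem in the form ``$G$ is virtually abelian if and only if $C^{*}(G)$ is GCR'' and then invokes Corollary 3.9 of \cite{MainPaper}, which gives $W^*$-factor tracial stability for GCR $C^*$-algebras with a one-dimensional representation. You instead route everything through the amenable characterization (Theorem \ref{amenablegroup}) and verify condition (3) by hand: Thoma's theorem to get type I-ness, Glimm's characterization (every representation of a separable type I $C^*$-algebra generates a type I von Neumann algebra) to see that the GNS factor of an extremal character is $B(H)$, finiteness of the trace to force $\dim H<\infty$, and then a Choquet/convexity step to pass to arbitrary characters. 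That last step is exactly the paper's Lemma \ref{extreme}, which you could simply cite rather than reprove; your rational-multiplicity direct-sum bookkeeping matches its proof. What your approach buys: it is self-contained at the group level, it avoids the external $C^*$-algebraic corollary, and it actually shows something sharper --- for virtually abelian groups every \emph{extremal} character is exactly (not just approximately) the trace of a finite-dimensional representation, so the limiting process enters only through convex combinations; in effect you re-derive the relevant case of Corollary 3.9. What the paper's citation buys is brevity, and it does not formally need amenability (which your use of Theorem \ref{amenablegroup} does, though it is automatic here). One small inaccuracy, harmless to the argument: the GNS representation $\pi_{\tau}$ of an extremal character with $M\cong M_{n}(\mathbb{C})$ acts on $L^{2}(M_{n},tr_{n})$ and is a direct sum of $n$ copies of the irreducible $n$-dimensional representation, so it is finite-dimensional but not irreducible; the identity $\tau=tr\circ\pi_{\tau}$ still holds, and irreducibility is never needed in your direct-sum step.
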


\begin{proof}
As is well known, $G$ is virtually abelian if and only if  $C^{*}(G)$ is GCR  (\cite{Thoma1}, \cite{Thoma2}).
Since $C^*(G)$ has a 1-dimensional
representation, the
statement follows from [Corollary 3.9, \cite{MainPaper}].
\end{proof}

We will use Theorem \ref{amenablegroup} to prove that the discrete Heisenberg group is $W^*$-factor stable.
Recall that the discrete Heisenberg group $\mathbb{H}_{3}$ is the group generated by $u,v$
with the relations that $u$ and $v$ commute with $uvu^{-1}v^{-1}$. It is known
that $\mathbb{H}_{3} $ is amenable.

\begin{lemma}\label{extreme} If each extreme character is a pointwise limit of traces of finite-dimensional representations, then so is any character.
\end{lemma}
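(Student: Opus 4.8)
The plan is to realize the set of characters that are pointwise limits of traces of finite-dimensional representations as a closed, convex subset of the (compact, convex) space of all characters, and then conclude by the Krein--Milman theorem.

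First I would fix the topological framework. As noted in the proof of Theorem \ref{amenablegroup}, characters of $G$ correspond bijectively to tracial states on $C^*(G)$, and the space $K$ of all characters, equipped with the topology of pointwise convergence on $G$, is a compact convex subset of the locally convex space $\mathbb{C}^G$ (it is a weak-$*$ closed subset of the state space of $C^*(G)$ carried over under this correspondence). Its extreme points are exactly the extreme characters. Since $G$ is countable, this topology is metrizable on $K$, so closures and limits may be taken along sequences.

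Next I would let $\mathcal{F}\subseteq K$ be the pointwise closure of the set $T$ of normalized traces $g\mapsto tr_d(\pi(g))$ of finite-dimensional unitary representations $\pi$ of $G$; the statement to be proved is precisely that $\mathcal{F}=K$. By construction $\mathcal{F}$ is closed, so the two remaining points are that $\mathcal{F}$ is convex and that $\mathcal{F}$ contains every extreme character. The latter is immediate from the hypothesis. For convexity the key device is the direct sum: if $\pi_1,\pi_2$ have dimensions $d_1,d_2$, then the normalized trace of $\pi_1\oplus\pi_2$ equals $\frac{d_1}{d_1+d_2}\,tr(\pi_1)+\frac{d_2}{d_1+d_2}\,tr(\pi_2)$, and taking $m$ copies of $\pi_1$ and $n$ copies of $\pi_2$ produces the weight $\frac{m d_1}{m d_1+n d_2}$ on $tr(\pi_1)$. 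As $m/n$ ranges over the positive rationals this weight sweeps out a dense subset of $(0,1)$, so every convex combination of two elements of $T$ lies in $\overline{T}=\mathcal{F}$; a routine diagonal approximation then extends this to arbitrary elements of $\mathcal{F}$, giving full convexity.

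Finally I would invoke Krein--Milman: $\mathcal{F}$ is a closed convex subset of $K$ containing all extreme points of $K$, hence it contains the closed convex hull of those extreme points, which is all of $K$. Therefore $\mathcal{F}=K$, i.e. every character is a pointwise limit of traces of finite-dimensional representations. I expect the main obstacle to be the convexity step: one must manage the dimension bookkeeping in the direct-sum construction so as to obtain a dense set of achievable convex weights and then pass to the closure. Once convexity is in hand, the rest is standard compact-convex-set theory.
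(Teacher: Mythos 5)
Your proof is correct and is essentially the paper's own argument in abstract packaging: the paper likewise approximates a given character by a rational convex combination $\sum_i \frac{s_i}{m}\sigma_i$ of extreme characters (Krein--Milman) and then realizes that combination exactly as the normalized trace of a direct sum of finite-dimensional representations with multiplicities chosen so the dimension weights equal $\frac{s_i}{m}$ --- the same device as your $\frac{md_1}{md_1+nd_2}$ bookkeeping. Your reformulation, proving that the pointwise closure of finite-dimensional traces is a closed convex set containing the extreme points and concluding $\mathcal{F}=K$, differs from the paper's direct $2\epsilon$-estimate only in presentation, not in substance.
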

\begin{proof} Let $\tau$ be a character, $\epsilon >0$ and $g_1, \ldots, g_n\in G$. Since the set of all characters of a group is convex and compact in $\ast$-weak topology, there are rational numbers $s_1/m, \ldots, s_l/m$ with $s_1+ \ldots + s_l=m$, and extreme characters $\sigma_1, \ldots, \sigma_l$ such that
\begin{equation}\label{1lemma}|\tau(g_k) - \sum_{i=1}^l \frac{s_i}{m} \sigma_i(g_k)| \le \epsilon,\end{equation} $k =1, \ldots, n$. By the assumption, there exist representations $\pi_i: G \to M_{n_i}(\mathbb C)$ such that \begin{equation}\label{2lemma}|\sigma_i(g_k)  - tr \pi_i(g_k)| \le \epsilon.\end{equation} Let $L\in \mathbb N$ be such that $\frac{s_i}{n_i}L$ is an integer, for all $1\le i\le l$.
Let $$\pi = \oplus_{i=1}^l \pi_i^{\left(\frac{s_i}{n_i}L\right)}$$ (here $\pi_i^{\left(\frac{s_i}{n_i}L\right)}$  denotes a direct sum of $\frac{s_i}{n_i}L$ copies of $\pi_i$).
It is easy to check that \begin{equation}\label{3lemma}tr\pi(g_k) = \sum_{i=1}^l \frac{s_i}{m} tr \pi_i(g_k),\end{equation} $k=1, \ldots, n$.
By (\ref{1lemma}), (\ref{2lemma}), (\ref{3lemma}), $$|\tau(g_k) - tr \pi(g_k)|\le 2\epsilon.$$

\end{proof}

\begin{theorem}\label{Heisenberg}
$  \mathbb{H}_{3}  $ is $W^{*}$-factor stable.
\end{theorem}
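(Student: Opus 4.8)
The plan is to run the two reductions that the preceding results make available. Since $\mathbb{H}_3$ is amenable, Theorem \ref{amenablegroup} reduces $W^*$-factor stability to the statement that every character of $\mathbb{H}_3$ is a pointwise limit of traces of finite-dimensional representations, and Lemma \ref{extreme} further reduces this to the \emph{extreme} characters. So the whole problem becomes: describe the extreme characters of $\mathbb{H}_3$ and approximate each of them.

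Write a general element as $u^a v^b c^k$, where $c=uvu^{-1}v^{-1}$ generates the center $Z\cong\mathbb{Z}$. First I would record the structural facts that pin down the extreme characters. If $\tau$ is extreme, then in its GNS representation the central element $c$ acts by a scalar $\lambda\in\mathbb{T}$, so $\tau(c^k)=\lambda^k$ and $\tau(c^m g)=\lambda^m\tau(g)$. A direct computation gives $u(u^av^bc^k)u^{-1}=c^{b}\,u^av^bc^k$ and $v(u^av^bc^k)v^{-1}=c^{-a}\,u^av^bc^k$, so a non-central element $g=u^av^bc^k$ is conjugate to $c^{\gcd(a,b)}g$; since $\tau$ is a class function this forces $\tau(g)=\lambda^{\gcd(a,b)}\tau(g)$, hence $\tau(g)=0$ whenever $\lambda^{\gcd(a,b)}\neq 1$. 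Consequently, when $\lambda$ is not a root of unity the extreme character is completely determined: it is $\tau_\lambda(u^av^bc^k)=\lambda^k$ if $(a,b)=(0,0)$ and $0$ otherwise. When $\lambda=1$ it factors through $\mathbb{Z}^2$, and when $\lambda$ has finite order $n$ it factors through the virtually abelian quotient $\mathbb{H}_3/\langle c^n\rangle$; in both of these cases the extreme character is the trace of a finite-dimensional representation (alternatively one may simply invoke the classical classification of the characters of $\mathbb{H}_3$).

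Extreme characters that are themselves traces of finite-dimensional representations are trivially pointwise limits of such traces, so the only genuine work is the approximation of the type $II_1$ characters $\tau_\lambda$ with $\lambda$ not a root of unity. Here I would use the finite Weyl (clock-and-shift) representations. Choose roots of unity $\omega_m\to\lambda$; since the roots of unity of any bounded order form a finite set avoiding $\lambda$, the order $d_m$ of $\omega_m$ automatically tends to infinity. Let $\pi_m$ be the $d_m$-dimensional representation with $\pi_m(u)$ the cyclic shift on $\mathbb{C}^{d_m}$ and $\pi_m(v)=\mathrm{diag}(1,\omega_m,\dots,\omega_m^{d_m-1})$, arranged so that $\pi_m(c)=\omega_m I$. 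A one-line computation of the normalized trace gives
\[
tr\,\pi_m(u^av^bc^k)=
\begin{cases}
\omega_m^{k},& d_m\mid a \ \text{and}\ d_m\mid b,\\
0,&\text{otherwise.}
\end{cases}
\]
Fixing $(a,b,k)$ and letting $m\to\infty$: if $(a,b)=(0,0)$ the trace equals $\omega_m^{k}\to\lambda^{k}$, while if $(a,b)\neq(0,0)$ then eventually $d_m>\max(|a|,|b|)$, so $d_m$ cannot divide both $a$ and $b$ and the trace is $0$. Thus $tr\,\pi_m\to\tau_\lambda$ pointwise, which is exactly the required approximation.

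The conjugacy identities and the trace computation are routine. The step that demands the most care is the classification of the extreme characters — specifically verifying that every extreme character whose central value is a root of unity is finite-dimensional (via the virtually abelian quotient $\mathbb{H}_3/\langle c^n\rangle$, or by appeal to the known character theory of $\mathbb{H}_3$), so that the genuinely infinite-dimensional extreme characters are exhausted by the family $\tau_\lambda$ treated above. Once this is in place, the conclusion $W^*$-factor stability (and, by Theorem \ref{amenablegroup}, matricial stability as well) follows.
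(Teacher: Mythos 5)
Your proposal is correct, and it shares the paper's skeleton — reduce via Theorem \ref{amenablegroup} and Lemma \ref{extreme} to approximating extreme characters, then split according to whether the central value $\lambda=\tau(c)$ is a root of unity — but the two decisive steps are executed quite differently. For the non-root-of-unity case the paper does not classify the character by hand: it passes to the GNS representation, identifies $C^{*}(\pi(u),\pi(v))$ with the irrational rotation algebra $\mathcal{A}_{\theta}$, and uses simplicity and uniqueness of the trace of $\mathcal{A}_{\theta}$ together with a tracial ultraproduct of finite-dimensional irreducible representations $\pi_{k}$ (with $\pi_k(uvu^{-1}v^{-1})=e^{2\pi i\theta_k}$, $\theta_k\to\theta$) to force $\tau(a)=\lim_{k\to\alpha}tr\,\pi_k(a)$; you instead pin down the character explicitly by the class-function computation $\tau(g)=\lambda^{\gcd(a,b)}\tau(g)$ for $g=u^{a}v^{b}c^{k}$ noncentral, and then verify the convergence $tr\,\pi_m\to\tau_\lambda$ by a direct trace computation for the clock-and-shift representations (which are essentially the same $\pi_k$ the paper uses). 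Your version is more elementary and self-contained at this step: no simplicity or unique-trace input about $\mathcal{A}_\theta$ and no ultraproduct is needed, only the divisibility pattern of $tr(S^{a}D^{b})$. Conversely, in the root-of-unity case the paper is the more self-contained one: it shows directly that $\pi(u)^{n},\pi(v)^{n}$ are scalars in the factor, so $C^{*}(\pi(u),\pi(v))$ is a matrix algebra and $\tau$ is itself a finite-dimensional trace, whereas you appeal to the virtually abelian quotient $\mathbb{H}_3/\langle c^{n}\rangle$ plus type-I/classification facts (Thoma's theorem, as used for Theorem \ref{VirtAb}) to conclude the same; that appeal is legitimate — a finite type-I factor is a matrix algebra — but it is an external citation where the paper gives a three-line direct argument. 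Both routes are sound; yours buys explicitness and independence from rotation-algebra rigidity, the paper's buys a uniform operator-algebraic treatment that avoids any character classification.
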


\begin{proof}
Suppose $\tau$ is an extreme point in the set of characters of $\mathbb H_3$. Then it extends to an extreme
 tracial state on $C^*(\mathbb H_3)$, i.e.
 a factor tracial state on $C^*(\mathbb H_3)$. We will denote it also by $\tau$. Let $\pi:$ $C^{\ast}\left(  \mathbb{H}_{3}\right)
\rightarrow B\left(  H\right)  $ be the GNS representation for $\tau$. Let
$U=\pi \left(  u\right)  $ and $V=\pi \left(  v\right)  $. Since $\pi \left(
\mathcal{A}\right)  ^{\prime \prime}$ is a factor and $UVU^{-1}V^{-1}%
=\pi \left(  uvu^{-1}v^{-1}\right)  $ is in its center, there is a real number
$\theta$ such that
\[
UVU^{-1}=e^{2\pi i\theta}V
\]
and%
\[
V^{-1}UV=e^{2\pi i\theta}U
\]
First suppose $\theta$ is rational, then there is a positive integer $n$ such
that $n\theta \in \mathbb{Z}$. In this case we have%
\[
U^{n}VU^{-n}=V\text{ and }V^{-n}UV^{n}=U,
\]
which implies $U^{n}=\alpha$ and $V^{n}=\beta$ for scalars $\alpha$ and
$\beta$. For every positive integer $m$ there is a positive integer $k$ such
that $m<kn$. Thus%
\[
U^{-m}=U^{kn-m}U^{kn}=\alpha^{k}U^{kn-m}\text{ and }V^{-m}=\beta^{k}V^{kn-m}.
\]
Since $UV=e^{2\pi i\theta}VU$, every monomial in $U,V,U^{-1},V^{-1}$ can be
written as a scalar times $U^{a}V^{b}$ for integers $a,b$ with $0\leq a,b<n$.
Hence $C^{\ast}\left(  U,V\right)  $ is finite-dimensional, which means
$C^{\ast}\left(  U,V\right)  =C^{\ast}\left(  U,V\right)  ^{\prime \prime}$ is
isomorphic to $\mathcal{M}_{k}\left(  \mathbb{C}\right)  $ for some
$k\in \mathbb{N}$. Hence $\tau$ is a matricial tracial state.

Next suppose $\theta$ is irrational. Then $U, V$ give a representation of the
irrational rotation $C^{*}$-algebra $\mathcal{A}_{\theta}$. Since
$\mathcal{A}_{\theta}$ is simple, $C^{\ast}\left(  U,V\right)  $
is isomorphic to $\mathcal{A}_{\theta}$ and hence has a unique tracial state.
In this case we can choose a sequence $\left \{  \theta_{k}\right \}  $ of
rational numbers such that $\theta_{k} \to \theta$, and find finite-dimensional
irreducible representations $\pi_{k}: \mathbb H_3 \to M_{n_k}(\mathbb C)$  such that $\pi
_{k}\left(  uvu^{-1}v^{-1}\right)  =e^{2\pi i\theta_{k}}$. Let $\alpha$ be a non-trivial unltrafilter on $\mathbb N$, then in the tracial ultraproduct $\prod_{i\in \mathbb N}^{\alpha} M_{n_i}(\mathbb C)$ we get $\hat{U}=\left \{  \pi_{k}\left(  u\right)
\right \}_{\alpha}  $ and $\hat{V}=\left \{  \pi_{k}\left(  v\right)  \right \}_{\alpha}  $ satisfy
$\hat{U}\hat{V}\hat{U}^{-1}\hat{V}^{-1}=e^{2\pi i\theta}$. Thus C*$\left(
\hat{U},\hat{V}\right)  $ is also isomorphic to $\mathcal{A}_{\theta}$ and
hence has a unique tracial state which has to coincide with $\tau$. Hence, for every $a\in C^{\ast
}\left(  \mathbb{H}_{3}\right)  $%
\[
\tau \left(  a\right)  =\lim_{k\rightarrow \alpha}tr\left(  \pi
_{k}(a\right)  ).
\]
It follows from Lemma \ref{extreme} and Theorem \ref{amenablegroup} that $ \mathbb{H}_{3}  $
is $W^{*}$-factor stable.
\end{proof}

\noindent {\bf Remark.}
\textrm{It would be interesting to know if our characterization of matricial
 stability for amenable groups can be reformulated in terms of
"separation properties" of groups. By this we mean properties like residual
finiteness (which means that a group has a
separating family of homomorphisms into finite groups), the property of being maximally almost periodic (which means that a group has a
separating family of finite-dimensional representations), the property of
being conjugacy separable (which means that homomorphisms to finite groups separate conjugacy classes), the property that finite-dimensional
representations separate conjugacy classes, etc. For example, it is easy to
see that for an amenable group matricial stability implies that the
group is maximally almost periodic. We don't know if it is also a sufficient
condition, and we believe that it is not. Otherwise for $C^*(G)$ to be nuclear and matricially tracially  stable would be equivalent to be nuclear RFD
(since an amenable group $G$ is maximally almost periodic iff $C^*(G)$ is RFD by \cite{BL})
and in in \cite{MainPaper} we constructed an example of  nuclear RFD $C^*$-algebra which is not matricially tracially stable. This makes us think
that for an amenable group being maximally almost periodic is probably not sufficient for matricial stability.
Separation properties for conjugacy classes seem to us to be
more relevant. For instance if a group is conjugacy separable, then the
Stone-Weierstrass theorem leads to an easy proof that each character of $G$ is
a pointwise limit of linear combinations of two traces of finite-dimensional
representations (which is close to the condition 3) in Theorem
\ref{amenablegroup}). In the opposite direction, by Theorem
\ref{amenablegroup} the property that finite-dimensional representations
separate conjugacy classes would be necessary if the characters separate
conjugacy classes. }

\medskip

\textbf{Question}: Let $G$ be an amenable maximally almost periodic group. Do its characters separate
conjugacy classes?

\section{Character rigidity and the approximation property ($\ast$).}

Below we will say that a group $G$ has {\it the approximation property ($\ast$)} if any embeddable character of $G$  is a pointwise limit of traces of finite-dimensional representations. Thus by Theorem \ref{NecessaryConditionGroups}, the approximation property ($\ast$) is necessary for being matricially stable, and by Theorem \ref{amenablegroup}, if a group is  amenable, then it is also sufficient.

Following \cite{PetersonThom} (also \cite{Bekka}) we will say that a character is {\it induced from the center} if it vanishes outside the center.

An example of a character induced from the center is a character $\delta_{e}$ defined by $\delta_{e}(g) =
\left \{
\begin{array}
[c]{cc}%
1 & \text{if }g=e\\
0 & \text{if } g\neq e.
\end{array}
\right.  $

\begin{proposition}
\label{CanonicalCharacter} Let $G$ be a maximally almost periodic group. Then
$\delta_{e}$ is a pointwise limit of traces of some finite-dimensional
representations of $G$.
\end{proposition}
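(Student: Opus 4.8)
The plan is to reduce the statement to a soft assertion about compact groups and then settle it by convexity and a separation argument. To prove that $\delta_e$ is a pointwise limit of traces of finite-dimensional representations, it suffices to fix a finite set $F\subseteq G\setminus\{e\}$ and $\epsilon>0$ and produce a single finite-dimensional unitary representation $\pi$ of $G$ with $|tr(\pi(g))|<\epsilon$ for all $g\in F$; note that $tr(\pi(e))=1=\delta_e(e)$ automatically. Since $G$ is maximally almost periodic, for each $g\in F$ there is a finite-dimensional unitary representation $\rho_g$ with $\rho_g(g)\neq 1$, so $\rho:=\bigoplus_{g\in F}\rho_g$ is a single finite-dimensional unitary representation with $\rho(g)\neq 1$ for every $g\in F$. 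Let $K:=\overline{\rho(G)}\subseteq\mathcal U\left(M_d(\mathbb C)\right)$ be its closure, a compact group, and put $k_g:=\rho(g)\neq e$. Every continuous finite-dimensional representation $\sigma$ of $K$ pulls back to a finite-dimensional representation $\sigma\circ\rho$ of $G$ with $tr\left((\sigma\circ\rho)(g)\right)=tr(\sigma(k_g))$. Hence it is enough to prove: for a compact group $K$ and finitely many non-identity elements $k_1,\dots,k_m$, there is a continuous finite-dimensional representation $\sigma$ with $|tr(\sigma(k_j))|<\epsilon$ for all $j$.

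The naive attempt is to choose, via Peter--Weyl, an irreducible $\sigma$ with $\sigma(k_j)\neq 1$ and then drive the trace to $0$ using tensor powers, since $tr\left(\sigma^{\otimes n}(k)\right)=tr(\sigma(k))^{n}$. This succeeds whenever $\sigma(k_j)$ is non-scalar, but it fails precisely when $\sigma(k_j)$ is a scalar $\lambda_j 1$ with $|\lambda_j|=1$ --- for instance when $k_j$ is central --- because then $|tr|=1$ is preserved under tensoring and amplification. I expect this scalar (central) case to be the main obstacle, and I would bypass the combinatorics of balancing the $k_j$ simultaneously by a functional-analytic argument instead.

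The key step is to consider the set $Q\subseteq\mathbb C^{m}$ defined as the closure of $\{(tr\,\sigma(k_1),\dots,tr\,\sigma(k_m)):\sigma\ \text{finite-dimensional}\}$, and to show $0\in Q$, which is exactly what we want. Forming direct sums $\sigma_1^{\oplus a}\oplus\sigma_2^{\oplus b}$ and computing normalized traces shows that all rational convex combinations of two such tuples are realized, so $Q$ is convex; its entries have modulus $\le 1$, so $Q$ is compact; and since $\overline\sigma$ is again a representation, $Q$ is invariant under coordinatewise complex conjugation. If $0\notin Q$, strict separation of the point $0$ from the compact convex set $Q$ yields, after symmetrizing in $\sigma\leftrightarrow\overline\sigma$ and in $k\leftrightarrow k^{-1}$, a nonzero real signed measure $\nu$ supported on $\{k_j,k_j^{-1}\}_{j}$ with $\int tr\,\sigma\,d\nu\ge c>0$ for every finite-dimensional representation $\sigma$.

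To reach a contradiction I would average $\nu$ over conjugation to obtain a central (conjugation-invariant), real, symmetric measure $\tilde\nu$ supported in the compact set $\bigcup_j\left(C_{k_j}\cup C_{k_j^{-1}}\right)$, which misses $e$, and which has the same pairing against every class function. By Peter--Weyl the convolution operator $T_{\tilde\nu}$ on $L^2(K)$ is self-adjoint and acts as the real scalar $\int tr\,\sigma\,d\tilde\nu\ge c$ on the $\sigma$-isotypic block, so $T_{\tilde\nu}\ge c\,I$. On the other hand, choosing a symmetric open $U\ni e$ with $U^2$ disjoint from $\operatorname{supp}\tilde\nu$ and testing on the indicator $\mathbf 1_U$ gives $\langle T_{\tilde\nu}\mathbf 1_U,\mathbf 1_U\rangle=\int|Uk\cap U|\,d\tilde\nu(k)=0$, since $Uk\cap U=\varnothing$ whenever $k\notin U^{2}$, while $c\|\mathbf 1_U\|_2^{2}>0$ --- a contradiction. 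Therefore $0\in Q$; pulling the resulting $\sigma$ back through $\rho$ produces the desired representation of $G$, and this handles the central case that defeated the naive approach.
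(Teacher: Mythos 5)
Your proposal is correct, but it takes a genuinely different and much heavier route than the paper. The paper resolves exactly the scalar obstruction you identified with a one-line trick: it takes a separating finite-dimensional representation $\pi$ with $\pi(g_i)\neq 1$ for all $i$ and replaces it by $\tilde\pi=\pi\oplus\chi$, where $\chi$ is the trivial representation; then $tr\,\tilde\pi(g_i)$ is the average of the eigenvalues of $\pi(g_i)$ together with the number $1$, and since these unimodular numbers are not all equal one gets the strict bound $|tr\,\tilde\pi(g_i)|<1$ \emph{even when} $\pi(g_i)$ is a nontrivial scalar, after which tensor powers finish the job via $tr\,\tilde\pi^{\otimes N}(g_i)=\left(tr\,\tilde\pi(g_i)\right)^N\to 0$. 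You instead pass to the compact group $K=\overline{\rho(G)}$ and prove the stronger compact-group statement that $0$ lies in the closed set $Q$ of normalized-trace tuples, via direct-sum convexity, Hahn--Banach separation, conjugation-averaging, and a positivity contradiction for the convolution operator on $L^2(K)$; this argument is sound --- the isotypic eigenvalue computation, the compactness of the finitely many conjugacy classes (so $\operatorname{supp}\tilde\nu$ misses a neighborhood of $e$), and the test against $\mathbf 1_U$ all check out. Two small imprecisions, both harmless: the separating functional actually yields a self-adjoint complex measure $\nu=\tfrac12\sum_j\left(\bar w_j\delta_{k_j}+w_j\delta_{k_j^{-1}}\right)$ rather than a literally real signed one (self-adjointness is all the positivity argument needs), and depending on conventions the scalar on the $\sigma$-isotypic block is $\int tr\,\bar\sigma\,d\tilde\nu$ rather than $\int tr\,\sigma\,d\tilde\nu$, which is covered because your $Q$ is invariant under conjugating $\sigma$. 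In exchange for its length, your approach buys a Peter--Weyl-type fact of independent interest (simultaneous smallness of normalized traces at finitely many non-identity elements of any compact group), whereas the paper's direct-summand-plus-tensor-power argument is elementary, self-contained, and gives an explicit quantitative rate in $N$.
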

\begin{proof}
Let $\epsilon>0$, $g_{1}, \ldots, g_{n} \in G$, $g_{i} \neq1$ for all $i=1,
\ldots, n$. Since $G$ is maximally almost periodic, we can find a
finite-dimensional representation $\pi$ such that $\pi(g_{i}) \neq1$ for all
$i=1, \ldots, n$. Let $\chi: G \to \mathbb{C}$ be the trivial representation,
$\tilde \pi= \pi \oplus \chi$. Then
\[
|tr \tilde \pi(g_{i})| = \left| \frac{(dim \pi)tr \pi(g_{i}) +1}{dim \pi+
1}\right|  < 1,
\]
since this is absolute value of the average of numbers of absolute value not
larger than 1, not all of which are equal.

Let $\tilde \pi^{\otimes N}$ be the N-th tensor power of the representation
$\tilde \pi$. Then
\[
tr \tilde \pi^{\otimes N} (g_{i}) = (tr \tilde \pi(g_{i}))^{N} < \epsilon
\]
if $N$ is big enough. Thus
\[
|tr \tilde \pi^{\otimes N}(g_{i}) - \delta_{e}(g_{i})| < \epsilon
\]
for $i = 1, \ldots, n$.
\end{proof}

Below we will show that when a group is residually finite (RF), the approximation property above holds not only for $\delta_e$ but for all characters
induced from the center.

\begin{lemma}\label{centerFinGen} Let $G$ be a RF group and suppose its center $Z(G)$ is finitely generated.  Let $g_1, \ldots, g_N \in Z(G)$, $g_i \neq g_j$ when $i\neq j$.  Let $H$ be the subgroup generated by $g_1, \ldots, g_N$ and  $\chi$ be a 1-dimensional representation of $H$. Let $g_1', \ldots, g'_{m} \notin Z(G)$ and let $\epsilon > 0$. Then there exists a finite group $G_0$, a surjective homomorphism $f: G \to G_0$ and a 1-dimensional representation $\tilde \chi$ of  $f(H)$ such that $$|\tilde \chi(f(g_i)) - \chi(g_i)| < \epsilon,$$ for $i = 1, \ldots, N$ and $f(g_i')\notin f(Z(G))$, for $i=1, \ldots, N'$.
\end{lemma}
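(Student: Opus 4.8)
The plan is to realize the two requirements on the finite quotient $f\colon G\to G_{0}$ separately and then combine them. Note first that $H=\langle g_{1},\dots,g_{N}\rangle$ is a finitely generated abelian group (it has $N$ generators, and is in any case a subgroup of the finitely generated abelian group $Z(G)$). For any finite-index normal subgroup $M\trianglelefteq G$ and the quotient $f\colon G\to G_{0}=G/M$, the image $f(H)=H/(H\cap M)$ is a finite abelian group, and a $1$-dimensional representation $\tilde\chi$ of $f(H)$ is exactly a character $\psi$ of $H$ trivial on $H\cap M$, related by $\tilde\chi(f(h))=\psi(h)$. Thus it suffices to produce $M$ so that (a) some character $\psi$ of $H$ trivial on $H\cap M$ satisfies $|\psi(g_{i})-\chi(g_{i})|<\epsilon$ for all $i$, and (b) $f(g_{j}')\notin f(Z(G))$ for $j=1,\dots,m$.

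Requirement (b) is where residual finiteness enters directly. Since $f$ is surjective one has $f(Z(G))\subseteq Z(G_{0})$, so it is enough to arrange $f(g_{j}')\notin Z(G_{0})$. For each $j$, as $g_{j}'\notin Z(G)$ there is $h_{j}\in G$ with $c_{j}:=[g_{j}',h_{j}]\neq e$; by residual finiteness there is a finite-index normal $M_{j}\trianglelefteq G$ with $c_{j}\notin M_{j}$. Put $M'=\bigcap_{j}M_{j}$. In any quotient by a subgroup of $M'$ we keep $f(c_{j})\neq e$, so $f(g_{j}')$ fails to commute with $f(h_{j})$ and hence $f(g_{j}')\notin Z(G_{0})$.

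Requirement (a) is the main point, and the naive route is a trap: one is tempted to first approximate $\chi$ by a finite-order character $\chi'$, set $K=\ker\chi'$, and then seek $M$ with $M\cap H\subseteq K$. But $K\subseteq H\subseteq Z(G)$ is normal in $G$, so this amounts to asking the possibly non-residually-finite group $G/K$ to detect the finite central subgroup $H/K$ through its finite quotients, which may be impossible. Instead I would fix nothing in advance and use Pontryagin duality. The subgroups $\{H\cap M: M\trianglelefteq G \text{ of finite index}\}$ form a filter base of finite-index subgroups of $H$ with $\bigcap_{M}(H\cap M)=H\cap\bigcap_{M}M=\{e\}$ by residual finiteness of $G$. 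Let $S\subseteq\widehat{H}$ be the set of characters of $H$ trivial on some $H\cap M$; it is a subgroup of the compact dual $\widehat{H}$. I claim $S$ is dense: if its closure were proper, duality would produce a nontrivial $h\in H$ with $\psi(h)=1$ for all $\psi\in S$; but for each fixed $M$ the characters trivial on $H\cap M$ are exactly the characters of the finite group $H/(H\cap M)$ and hence separate its points, forcing $h\in H\cap M$, whence $h\in\bigcap_{M}(H\cap M)=\{e\}$, a contradiction. Since the topology on $\widehat{H}$ is that of pointwise convergence on the finite set $\{g_{1},\dots,g_{N}\}$, density yields $\psi\in S$ with $|\psi(g_{i})-\chi(g_{i})|<\epsilon$ for all $i$, and $\psi$ is trivial on $H\cap M_{\psi}$ for some finite-index normal $M_{\psi}\trianglelefteq G$.

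Finally I would take $M=M'\cap M_{\psi}$, $G_{0}=G/M$, with $f$ the quotient map. Then $M\subseteq M_{j}$ secures (b) as above, while $H\cap M\subseteq H\cap M_{\psi}\subseteq\ker\psi$ lets $\psi$ descend to a character $\tilde\chi$ of $f(H)=H/(H\cap M)$ with $\tilde\chi(f(g_{i}))=\psi(g_{i})$, so that $|\tilde\chi(f(g_{i}))-\chi(g_{i})|<\epsilon$. I expect requirement (a) to be the genuine obstacle: the obvious ``fix the kernel, then separate'' strategy breaks because quotients of residually finite groups need not be residually finite, and the remedy is to let the approximating character adapt to whatever finite quotients are available, the density of admissible characters being guaranteed by Pontryagin duality together with $\bigcap_{M}(H\cap M)=\{e\}$.
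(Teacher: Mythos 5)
Your proof is correct, but it takes a genuinely different route from the paper's. The paper argues concretely: it decomposes $H\cong\mathbb{Z}^{s}\times\Gamma$ via the structure theorem, writes $\chi$ on each $\mathbb{Z}^{(i)}$ as $n\mapsto e^{2\pi i n\theta_{i}}$, and uses residual finiteness to produce a single finite quotient $f$ that both separates the commutators $[g_{i}',g_{i}'']$ and is injective on a large finite ``box'' of $H$; this forces $f(H)\cong\mathbb{Z}_{k_{1}}\times\cdots\times\mathbb{Z}_{k_{s}}\times\Gamma$ with each $k_{i}$ so large that the $k_{i}$-th roots of unity form an $\frac{\epsilon}{s(L^{(i)}+1)}$-net, and $\tilde\chi$ is then built by hand by choosing $l_{i}/k_{i}$ close to $\theta_{i}$ and estimating the accumulated error on products. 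You replace all of this with a soft Pontryagin-duality argument: the characters of $H$ trivial on some $H\cap M$ form a subgroup $S\le\widehat{H}$ (the family $\{H\cap M\}$ is a filter base, so $S$ is closed under products), and $\overline{S}=\widehat{H}$ because a nontrivial annihilating element would lie in $\bigcap_{M}(H\cap M)=\{e\}$ by residual finiteness; density in the topology of pointwise convergence then hands you an adapted $\psi$, and intersecting $M_{\psi}$ with the commutator-separating subgroup $M'$ finishes exactly as in the paper (your stronger conclusion $f(g_{j}')\notin Z(G_{0})$ implies the stated $f(g_{j}')\notin f(Z(G))$ since $f(Z(G))\subseteq Z(G_{0})$). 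Both proofs embody the same essential idea you flag in your ``trap'' remark --- one cannot fix $\ker\chi'$ in advance and ask the profinite topology of $G$ to detect it, so the approximating character must adapt to the available quotients; the paper implements this adaptation explicitly via root-of-unity nets, while you get it abstractly from the closed-subgroup/annihilator correspondence. What the paper's version buys is quantitative and structural control (explicit $\tilde\chi$, explicit error estimates, the identification $\tilde\Gamma\cong\Gamma$); what yours buys is brevity and generality --- it never invokes the structure theorem, and in fact nowhere uses that $H$ (or $Z(G)$) is finitely generated, since the duality argument and the basic neighborhood involve only finitely many points of $H$.
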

\begin{proof}
Since $H$ is a finitely generated abelian group, it can be written  as $$H = \mathbb Z^{s}\times \Gamma,$$ where $s\in \mathbb N$ and $\Gamma$ is a finite abelian group.
So we can write $g_j = (n_1^j, n_2^j, \ldots, n_s^j, t)$ with $n_i^j\in \mathbb Z$, $t\in \Gamma$, $j\le N$. Let $\mathbb Z^{(i)}$ denote the i-th copy of $\mathbb Z$ in $H$.
For each $i\le s$ there is $\theta_i$ such that
\begin{equation}\label{RF2}\chi |_{\mathbb Z^{(i)}}(n) = e^{2\pi i n \theta_i}.\end{equation}
Let $$L^{(i)} = \max_{j\le N} |n_i^{j}|,$$ $i = 1, \ldots, s$. For each $i\le s$ there exists  $k_{0, i}$ such that for any $k\ge k_{0, i}$, the k-th roots of unity form an $\frac{\epsilon}{s(L^{(i)} +1)}$-net in the unit circle.

Since $g_1', \ldots, g'_{m} \notin Z(G)$, there exist $g''_1, \ldots, g''_{m}\in G$ such that $g_i'g''_i \neq g''_ig_i'$, $i = 1, \ldots, m$.
Since $G$ is RF, there is a finite group $G_0$ and a surjective homomorphism $f: G \to G_0$ such that \begin{equation}\label{RF5}f(g_i'g''_i)\neq f(g''_ig_i'),\end{equation} for $i = 1, \ldots, m$ and  \begin{equation}\label{RF1}f(n_1, \ldots, n_s, t) \neq f(n_1', \ldots, n_s', t'),\end{equation} when $t\in \Gamma$, $n_i, n_i' \le k_{0, i}$ and the tuples $(n_1, \ldots, n_s, t)$ and $(n_1', \ldots, n_s', t')$ do not coincide. It follows from (\ref{RF5}) that \begin{equation}\label{RF6} f(g_i')\notin f(Z(G)),\end{equation} $i =1, \ldots, m$.

 It is easy to see that $f(H) \cong (\prod_{i\le s}f(\mathbb Z^{(i)})) \times f(\Gamma)$. Hence $$f(H) = \mathbb Z_{k_1}\times \ldots \times \mathbb Z_{k_s} \times \tilde\Gamma, $$ for some $k_1, \ldots, k_s \in \mathbb N$ and some finite abelian group $\tilde\Gamma$. It follows from (\ref{RF1}) that $k_i \ge k_{0, i}$ and that $|\tilde \Gamma| \ge |\Gamma|$. Since $\tilde \Gamma$ is a homomorphic image of $\Gamma$, the latter implies that $\tilde \Gamma \cong \Gamma$. The first inequality, $k_i \ge k_{0, i}$, implies that there is $l_i< k_i$ such that
\begin{equation}\label{RF3} |e^{2\pi i l_i/k_i} - e^{2 \pi i \theta_i}| \le \frac{\epsilon}{s(L^{(i)}+1)}.\end{equation}

 Define  a 1-dimensional representation $\tilde \chi_i$ of $\mathbb Z_{k_i}$ by $$\tilde\chi_i(m) = e^{2\pi im l_i/k_i},$$ for each $m\in \mathbb Z_{k_i}$. Using (\ref{RF3}),  for any $m\le L^{(i)}$ we easily  obtain by induction that $$|\tilde\chi_i(m) - \chi |_{\mathbb Z^{(i)} } (m) | =
|e^{2\pi i(m \mod k_i)l_i/k_i} - e^{2\pi i m \theta_i}| = |e^{2\pi i m l_i/k_i} - e^{2\pi i m \theta_i}| \le \frac{\epsilon (m+1) }{s(L^{(i)}+1)}.$$
In particular for any $m\le L^{(i)}$ we obtain \begin{equation}\label{RF4}|\tilde\chi_i(m) - \chi |_{\mathbb Z^{(i)} } (m) | \le \frac{\epsilon  }{s}.\end{equation}

Define a 1-dimensional representation $\tilde \chi$ of $f(H)$
by $$\tilde\chi(f(n_1, \ldots, n_s, t)) = \tilde\chi_1(n_1)\ldots\tilde\chi_s(n_s)\chi(t),$$ for all $n_i \in \mathbb Z, t\in \Gamma$. From (\ref{RF4}) we deduce  ( estimating  $|a_1\ldots a_s - b_1\ldots b_s|$ in a standard way) that for any $n_i \le L^{(i)}$, $t\in \Gamma$ $$|\tilde\chi(f(n_1, \ldots, n_s, t)) - \chi(n_1, \ldots, n_s, t)| \le \epsilon.$$ Hence $$|\tilde \chi(f(g_i)) - \chi(g_i)| < \epsilon,$$ for $i = 1, \ldots, N$. This, together with (\ref{RF6}), completes the proof.
\end{proof}

\begin{theorem}\label{CharacterRigidity} Suppose $G$ is RF.  Then each character of $G$ induced from the center of $G$ is a pointwise limit of traces of
finite-dimensional representations.
\end{theorem}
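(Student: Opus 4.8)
The plan is to reduce to the \emph{extreme} characters induced from the center and to realize each of those as a limit of normalized traces of \emph{induced} representations coming from the finite quotients supplied by Lemma \ref{centerFinGen}. Write $Z := Z(G)$. If $\tau$ is induced from the center, then $\tau$ vanishes off $Z$ and its restriction $\phi := \tau|_Z$ is a normalized positive definite function on the abelian group $Z$ (a restriction of a positive definite function is positive definite). By Bochner's theorem on the discrete abelian group $Z$ we have $\phi = \int_{\widehat{Z}} \chi \, d\mu(\chi)$ for a probability measure $\mu$ on the Pontryagin dual; hence on any finite subset of $Z$ and for any $\epsilon$ the function $\phi$ is within $\epsilon$ of a finite convex combination $\sum_k t_k \chi_k$ of characters (one-dimensional representations) $\chi_k$ of $Z$ with rational weights $t_k$. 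For a character $\chi$ of $Z$ let $\tau_\chi$ be its extension by $0$ to $G$; grouping test elements into cosets of $Z$ shows each $\tau_\chi$ is itself a character of $G$ induced from the center (the relevant quadratic form becomes a sum of squares), and $\tau = \int \tau_\chi \, d\mu(\chi)$. So it suffices to approximate each $\tau_{\chi_k}$ by a trace of a finite-dimensional representation and then to assemble these, with the rational multiplicities $t_k$, into a single direct sum exactly as in the proof of Lemma \ref{extreme}.

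Next I fix a single character $\chi$ of $Z$, a finite set $F \subseteq G$, and $\epsilon > 0$. I split $F$ into its central part, which generates a finitely generated subgroup $H = \langle F \cap Z \rangle$ of $Z$, and its non-central part $g_1', \ldots, g_m'$. Applying Lemma \ref{centerFinGen} to $\chi|_H$ (whose argument uses only that $H$, and not all of $Z$, is finitely generated) produces a finite group $G_0$, a surjection $f \colon G \to G_0$, and a one-dimensional representation $\tilde\chi$ of $f(H)$ with $|\tilde\chi(f(h)) - \chi(h)|$ small on the generators of $H$ and with $f(g_i') \notin f(Z)$ for each non-central $g_i'$. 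Since $f$ is surjective, $f(Z) \subseteq Z(G_0)$, so $K := f(H)$ is a \emph{central} subgroup of the finite group $G_0$ on which $\tilde\chi$ is a character.

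I then form $\rho = \mathrm{Ind}_K^{G_0}\tilde\chi$ and pull it back to the finite-dimensional representation $\rho \circ f$ of $G$. Because $K$ is central in $G_0$, no conjugate of an element outside $K$ can land in $K$, so the induced-character formula gives, for the normalized trace, $\mathrm{tr}\,\rho = \tilde\chi$ on $K$ and $\mathrm{tr}\,\rho \equiv 0$ off $K$. Consequently $\mathrm{tr}(\rho \circ f)(h) = \tilde\chi(f(h)) \approx \chi(h) = \tau_\chi(h)$ for $h \in F \cap Z$, while for each non-central $g_i'$ we have $f(g_i') \notin f(Z) \supseteq K$, whence $\mathrm{tr}(\rho \circ f)(g_i') = 0 = \tau_\chi(g_i')$. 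Thus $\rho \circ f$ approximates $\tau_\chi$ on $F$ to within $\epsilon$, which completes the single-character step and, after the direct-sum assembly of the first paragraph, the proof.

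The heart of the argument is the passage from Step two to Step three: Lemma \ref{centerFinGen} only delivers approximate one-dimensional data on a finite quotient, and the nontrivial point is that \emph{inducing} this central character yields a representation whose trace is exactly the extend-by-zero pattern we need. This hinges on $K = f(H)$ being central in $G_0$ (forcing the induced character to vanish off $K$) together with the separation $f(g_i') \notin f(Z)$ guaranteed by the lemma; I expect verifying this trace computation to be the main obstacle. By contrast, the possible infinite generation of $Z(G)$ is a red herring, since everything is controlled inside the finitely generated subgroup $H$ and the finite quotient $G_0$, so Lemma \ref{centerFinGen} applies with $H$ in place of $Z(G)$; and the final recombination via rational convex combinations and direct sums is routine and identical in spirit to Lemma \ref{extreme}.
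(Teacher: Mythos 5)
Your proposal is correct and takes essentially the same route as the paper: the core step---applying Lemma \ref{centerFinGen} with $H=\langle F\cap Z(G)\rangle$, inducing $\tilde\chi$ from the central subgroup $f(H)$ of the finite quotient $G_0$, and using the Frobenius character formula together with $f(g_i')\notin f(Z(G))$ to get exactly the extend-by-zero trace---is identical to the paper's, and your Bochner-plus-rational-convex-combinations reduction is just an explicit version of the paper's reduction via Lemma \ref{extreme} to the extreme (one-dimensional) central characters extended by zero. Your remark that only finite generation of $H$, not of $Z(G)$, is used in Lemma \ref{centerFinGen} is a correct clarification of a hypothesis the paper leaves implicit when invoking that lemma.
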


\begin{proof}  By Lemma \ref{extreme} it will be sufficient to prove that each extreme point of the set of all characters induced from $Z(G)$ is a pointwise limit of traces of
finite-dimensional representations. Since an extreme point of the set of characters of an abelian group is a 1-dimensional representation, we should prove that if $\chi|_{Z(G)}$ is  a 1 -dimensional representation and $\chi$ vanishes outside $Z(G)$, then $\chi$ is a pointwise limit of traces of
finite-dimensional representations. Let $g_1, \ldots, g_N \in Z(G)$, $ g'_1, \ldots, g'_m\notin Z(G)$, $\epsilon > 0$. We need to find a finite-dimensional representation $\pi$ of $G$ such that  $|\chi(g_i) - tr(\pi(g_i))|\le \epsilon$, $i=1, \ldots, N$, and $|\chi(g'_i) - tr(\pi(g'_i))|\le \epsilon$, $i=1, \ldots, m$. Let $H$ be the subgroup generated by $g_1, \ldots, g_N$. By Lemma \ref{centerFinGen} there is a finite group $G_0$, a surjective homomorphism $f: G \to G_0$ and a 1-dimensional representation $\tilde\chi$ of $f(H)$ such that \begin{equation}\label{ChRig1}|\tilde\chi(f(g_i)) - \chi(g_i)|\le \epsilon,\end{equation} $i = 1, \ldots, N$, and \begin{equation}\label{ChRig2} f(g'_i)\notin f(Z(G)),\end{equation} $i =1, \ldots, m$.
 Let $\tilde \pi$ be the representation of $G_0$ induced from the 1-dimensional representation $\tilde\chi$ of $f(H)$. By Frobenius formula
 $$ tr \tilde \pi(f(g)) = \sum_{x\in G_0/f(H)} \hat\chi(x^{-1}f(g)x),$$ where
 \begin{equation}\label{ChRig3}
  \hat\chi(k) = \left\{
     \begin{array}{lcl}
       \tilde\chi(k) &;& k\in f(H)\\
       &&\\
       0 &;&  k\notin f(H).
     \end{array}
   \right.
\end{equation}

Since $f(H)$ is a central subgroup of $G_0$, it implies easily that

\begin{equation}\label{ChRig3}
   tr \tilde \pi(f(g)) = \left\{
     \begin{array}{lcl}
       \tilde\chi(f(g)) &;& f(g)\in f(H)\\
       &&\\
       0 &;&  f(g)\notin f(H).
     \end{array}
   \right.
\end{equation}

Let $\pi = \tilde \pi\circ f$. Then, by (\ref{ChRig1}), (\ref{ChRig2}) and (\ref{ChRig3}),   for each $i\le N$ $$|\chi(g_i) - tr(\pi(g_i))| = |\chi(g_i) - \tilde\chi(f(g_i))|
\le \epsilon, $$ and for each $i\le m$ $$|\chi(g_i') - tr(\pi(g_i'))| = 0.$$
\end{proof}

A group $G$ has {\it character rigidity} if the only extremal characters of $G$ which are not induced from the center
of $G$ are the traces of finite-dimensional representations (\cite{PetersonThom}).



\begin{corollary}\label{RFCharRig} If $G$ is RF and has character rigidity, then $G$ has the approximation property ($\ast$).
\end{corollary}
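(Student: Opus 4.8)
The plan is to invoke Lemma \ref{extreme} to reduce the claim to the extremal characters of $G$, and then to dispose of these according to whether or not they are induced from the center. Note first that the approximation property ($\ast$) asks only that every \emph{embeddable} character be a pointwise limit of traces of finite-dimensional representations; I would therefore aim for the stronger conclusion that \emph{every} character of $G$ has this property, since this trivially specializes to the embeddable ones. By Lemma \ref{extreme} this stronger conclusion follows once I show that every extremal character of $G$ is such a limit.

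First I would treat the extremal characters induced from the center. These are in particular characters induced from the center, so Theorem \ref{CharacterRigidity} — which requires only that $G$ be RF — applies and tells us directly that each of them is a pointwise limit of traces of finite-dimensional representations. This is where the residual-finiteness hypothesis is used.

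Next I would treat the extremal characters \emph{not} induced from the center. Here the character-rigidity hypothesis applies verbatim: by definition, the only extremal characters of $G$ that are not induced from the center are themselves traces of finite-dimensional representations. Such a character is its own approximation, so there is nothing to do in this case.

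Combining the two cases, every extremal character of $G$ is a pointwise limit of traces of finite-dimensional representations, and Lemma \ref{extreme} upgrades this to all characters, hence to all embeddable characters, which is exactly property ($\ast$). I do not expect a genuine obstacle here: the entire content is carried by the two cited results, and the only point that needs a moment's care is verifying that the dichotomy ``induced from the center versus not'' exhausts the extremal characters and that each alternative is handled — the former by the RF hypothesis through Theorem \ref{CharacterRigidity}, the latter by the character-rigidity hypothesis.
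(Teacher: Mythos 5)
Your proof is correct and is exactly the argument the paper intends: the corollary is stated without an explicit proof precisely because it follows by combining Lemma \ref{extreme}, Theorem \ref{CharacterRigidity}, and the definition of character rigidity in the way you describe. Your case dichotomy on extremal characters (induced from the center, handled by RF via Theorem \ref{CharacterRigidity}; not induced, hence traces of finite-dimensional representations by character rigidity) is exhaustive by definition, and deducing the stronger conclusion for all characters rather than only embeddable ones is harmless.
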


As was proved by Bekka \cite{Bekka}  $SL_{3}(\mathbb{Z})$ has character rigidity. Thus, by Corollary \ref{RFCharRig},  the necessary condition for matricial stability from  Theorem \ref{NecessaryConditionGroups} holds.  Since $SL_{3}(\mathbb{Z})$ is
non-amenable, we don't know if it is also sufficient.

\medskip

\textbf{Question}: Is $SL_{3}(\mathbb{Z})$ matricially stable?

\medskip

\section{One-relator groups with center.}

Recall that a \textit{one-relator group} is a group $G$ with a presentation
$G=\langle S | R\rangle$ where the generating set $S$ is finite and $R$ is a
single word on $S^{\pm1}$. All 1-relator groups but the Baumslag-Solitar groups
$BS(1, m)$ are non-amenable (\cite{Grigorchuk}).

We are going to prove that any one-relator group with a non-trivial center is
$II_{1}$-factor stable. All such groups are known to be residually
finite (\cite{Dyer}).

It was shown in \cite{Pietrowski1} that every such non-cyclic group is
presentable in one of two ways: as
\begin{equation}
\label{1-relatorgroupCase1}G= \left<  x_{1}, \ldots, x_{n} \;| \; x_{1}%
^{a_{1}} = x_{2}^{b_{1}}, x_{2}^{a_{2}} = x_{3}^{b_{2}}, \ldots,
x_{n-1}^{a_{n-1}} = x_{n}^{b_{n-1}}\right>
\end{equation}
where $a_{i}, b_{i}\ge2$ and $(a_{i}, b_{j}) = 1$ for $i> j$ (when the
commutator quotient group is not free abelian of rank two); or as
\begin{equation}
\label{1-relatorgroupCase2}G= \left<  u, x_{1}, \ldots, x_{m} \;|
\;ux_{1}u^{-1} = x_{m}, \; x_{1}^{a_{1}} = x_{2}^{b_{1}},\; x_{2}^{a_{2}} =
x_{3}^{b_{2}}, \; \ldots,\; x_{m-1}^{a_{m-1}} = x_{m}^{b_{m-1}}\right>
\end{equation}
where $a_{i}, b_{i} \ge2$, $a_{1}\ldots a_{m-1} = b_{1} \ldots b_{m-1},
(a_{i}, b_{j}) = 1$ for $i> j$ (when the commutator quotient group is free
abelian of rank two).

Since cyclic groups are $II_1$-factor (even $RR0$) stable, we are left with the two cases
above. We are not going to use anywhere that $(a_{i}, b_{j}) = 1$.

\medskip

We will need a lemma from \cite{MainPaper} adjusted for the case of full group $C^*$-algebras.
It states that  pointwise $\left \Vert {\;}\right \Vert _{2}%
$-limits of  liftable homomorphisms are liftable.

\begin{lemma} (\cite{MainPaper}, Lemma 2.2)
\label{LimitOfLiftable} Suppose $G$ is a group, $\left \{  \left(  \mathcal{A}_{i}%
,\rho_{i}\right)  :i\in \mathbb N\right \}  $ is a family of tracial C*-algebras,
$\alpha$ is a nontrivial ultrafilter on $\mathbb N,$ and $\pi: G \rightarrow \mathcal U\left(%
{\displaystyle \prod_{i\in \mathbb N}^{\alpha}}
\left(  \mathcal{A}_{i},\rho_{i}\right)\right)  $ is a homomorphism
such that, for each $g\in G$,
\[
\pi \left(  g\right)  =\left \{  g\left(  i\right)  \right \}  _{\alpha
}.
\]
The following are equivalent:

\begin{enumerate}
\item $\pi$ is liftable

\item For every $\varepsilon>0$ and every finite subset $F\subset G$, there is a set
$E\in \alpha$ and for every $i\in E$ there is a homomorphism
$\pi_{i}:G\rightarrow \mathcal U\left(\mathcal{A}_{i}\right)$ such that, for every $g\in F$
and every $i\in E$,%
\[
\left \Vert \pi_{i}\left(  g\right)  -g\left(  i\right)  \right \Vert
_{2,\rho_{i}}<\varepsilon.
\]

\end{enumerate}
\end{lemma}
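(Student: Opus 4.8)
The plan is to prove the two implications separately; the first is immediate, and the second is a diagonal patching argument exploiting the nontriviality of $\alpha$ together with the countability of $G$. Recall that $\pi$ being liftable means precisely that there exist homomorphisms $f_i:G\to\mathcal{U}(\mathcal{A}_i)$ with $\pi(g)=\{f_i(g)\}_\alpha$ for every $g$, i.e. $\lim_{i\to\alpha}\|f_i(g)-g(i)\|_{2,\rho_i}=0$ for each $g\in G$.

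For $(1)\Rightarrow(2)$ I would take the lifting homomorphisms $\pi_i$ provided by liftability and, for a fixed $\varepsilon>0$ and finite $F\subset G$, set $E=\bigcap_{g\in F}\{i:\|\pi_i(g)-g(i)\|_{2,\rho_i}<\varepsilon\}$. Each set in this intersection lies in $\alpha$ because the limit along $\alpha$ vanishes, and a finite intersection of members of an ultrafilter is again a member; hence $E\in\alpha$ and the required estimate holds for all $g\in F$, $i\in E$.

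For the converse $(2)\Rightarrow(1)$ I would first enumerate $G=\{g_1,g_2,\dots\}$ (using countability of $G$) and put $F_n=\{g_1,\dots,g_n\}$. Applying (2) with this $F_n$ and $\varepsilon=1/n$ produces a set $D_n\in\alpha$ and homomorphisms $\pi_i^{(n)}:G\to\mathcal{U}(\mathcal{A}_i)$ for $i\in D_n$ satisfying $\|\pi_i^{(n)}(g)-g(i)\|_{2,\rho_i}<1/n$ for all $g\in F_n$. Since $\alpha$ is nontrivial there is a decreasing sequence $E_1\supset E_2\supset\cdots$ in $\alpha$ with $\bigcap_n E_n=\varnothing$. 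I then form the decreasing sequence $A_n=E_n\cap D_1\cap\cdots\cap D_n\in\alpha$, which still has empty intersection because $A_n\subset E_n$, and define the lift by $f_i=\pi_i^{(n)}$ whenever $i\in A_n\setminus A_{n+1}$ (with $f_i$ arbitrary for $i\notin A_1$); this is well defined since the sets $A_n\setminus A_{n+1}$ partition $A_1$.

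It remains to verify that $\{f_i(g)\}_\alpha=\pi(g)$ for each $g$, which is the crux. Fixing $g=g_m$, for any $n\ge m$ and any $i\in A_n$ the index $i$ lies in some $A_k\setminus A_{k+1}$ with $k\ge n\ge m$, so $f_i=\pi_i^{(k)}$ and $g_m\in F_k$, whence $\|f_i(g_m)-g_m(i)\|_{2,\rho_i}<1/k\le 1/n$. Thus for every $n\ge m$ the set $A_n\in\alpha$ forces $\|f_i(g_m)-g_m(i)\|_{2,\rho_i}<1/n$, giving $\lim_{i\to\alpha}\|f_i(g_m)-g_m(i)\|_{2,\rho_i}=0$, i.e. $\pi(g_m)=\{f_i(g_m)\}_\alpha$. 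The main obstacle is exactly this coherent patching: condition (2) only supplies homomorphisms depending on the pair $(\varepsilon,F)$, and the real work is to glue them into a single family valid for all $g$ simultaneously. The nontrivial ultrafilter, through a decreasing chain with empty intersection, is what makes the gluing possible, and this is the same mechanism already used in the proof that the two definitions of stability coincide.
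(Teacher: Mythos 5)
Your proposal is correct, and it is essentially the paper's own argument: the paper does not reprove this lemma (it quotes it from \cite{MainPaper}), but the identical mechanism --- finite intersections of ultrafilter sets for $(1)\Rightarrow(2)$, and for $(2)\Rightarrow(1)$ a decreasing chain $A_{n}\in\alpha$ with $A_{n}\subset E_{n}$ and $\bigcap_{n}A_{n}=\varnothing$, patching $f_{i}=\pi_{i}^{(n)}$ on $A_{n}\setminus A_{n+1}$ and choosing $f_{i}$ arbitrarily off $A_{1}$ --- is exactly what appears in the paper's proof that its two definitions of stability coincide, as you yourself note. Your verification that each $i\in A_{n}$ lies in some $A_{k}\setminus A_{k+1}$ with $k\geq n$, so that $\left\Vert f_{i}(g_{m})-g_{m}(i)\right\Vert _{2,\rho_{i}}<1/n$ on all of $A_{n}$, correctly supplies the one step the paper leaves implicit.
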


\subsection{Groups of the form (\ref{1-relatorgroupCase1}).}

\begin{lemma}
\label{projections} Suppose $\left \{  \left(  \mathcal{A}_{n},\rho_{n}\right)
\right \}  $ is a sequence of tracial $C^{*}$-algebras of real rank zero,
$\alpha$ is a non-trivial ultrafilter on $\mathbb{N}$, and $r_{1}, \ldots,
r_{N}, q\in%
{\displaystyle \prod_{n\in \mathbb{N}}^{\alpha}}
\left(  \mathcal{A}_{n},\rho_{n}\right)  $ are projections such that
$\sum_{i=1}^{N} r_{i} = q$. Suppose projections $Q_{n}\in \mathcal{A}_{n}$,
$n\in \mathbb{N}$ are such that $\left \{  Q_{n}\right \}  _{\alpha} = q$. Then
there exist projections $R_{i, n}\in \mathcal{A}_{n}$, $n\in \mathbb{N}$, $i =
1, \ldots, N$, such that $\left \{  R_{i, n}\right \}  _{\alpha} = r_{i}$, $i=
1, \ldots, N$, and $\sum_{i=1}^{N} R_{i, n} = Q_{n}.$
\end{lemma}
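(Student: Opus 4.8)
The plan is to reduce the statement to the lifting of a single subprojection and to settle that via real rank zero. First I would observe that, since $q - r_i = \sum_{j \neq i} r_j \geq 0$, each $r_i$ is a subprojection of $q$; in particular $q - r_1$ is again a projection and $r_2 + \cdots + r_N = q - r_1$. This lets me argue by induction on $N$. The inductive step will only require the following special case: given a subprojection $p \leq q$ in $\prod_{n}^{\alpha}(\mathcal{A}_n, \rho_n)$ together with projections $Q_n \in \mathcal{A}_n$ satisfying $\{Q_n\}_\alpha = q$, one can find projections $P_n \leq Q_n$ in $\mathcal{A}_n$ with $\{P_n\}_\alpha = p$. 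Granting this, I lift $r_1$ to projections $R_{1,n} \leq Q_n$, replace $q$ by the projection $q - r_1$ and $Q_n$ by the projection $Q_n - R_{1,n}$ (which lifts $q - r_1$), and apply the induction hypothesis to $r_2, \ldots, r_N$; the resulting $R_{2,n}, \ldots, R_{N,n}$ lie under $Q_n - R_{1,n}$, hence are orthogonal to $R_{1,n}$ and sum to $Q_n$, as required.

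For the subprojection lift I would pass to corners. The corner $Q_n \mathcal{A}_n Q_n$ is a tracial $C^*$-algebra of real rank zero (\cite{BP}), and the compression of the ultraproduct by $q$ is canonically identified with $\prod_{n}^{\alpha}(Q_n \mathcal{A}_n Q_n, \rho_n)$; a projection lifted inside the corner automatically lies under $Q_n$. Thus it suffices to prove the general fact that \emph{in a tracial ultraproduct of real rank zero $C^*$-algebras every projection lifts to a sequence of projections}. Starting from an arbitrary lift of $p$, I symmetrize it and apply the continuous function $g(t) = \max(0, \min(1, t))$; since the quotient map onto the ultraproduct is a $*$-homomorphism it commutes with continuous functional calculus, so I obtain self-adjoint lifts $b_n$ with $0 \leq b_n \leq 1$ and $\{b_n\}_\alpha = p$, whence $\lim_{n \to \alpha}\|b_n - b_n^2\|_2 = 0$ and therefore $\lim_{n \to \alpha} \rho_n(b_n - b_n^2) = 0$ (using $b_n - b_n^2 \geq 0$ and $\|\cdot\|_1 \leq \|\cdot\|_2$).

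The decisive step, where real rank zero is genuinely used, is to produce from each $b_n$ an honest projection $P_n$ in the algebra that is $2$-norm close to $b_n$. Here I approximate $b_n$ in operator norm by a self-adjoint element $c_n$ of finite spectrum with values in $[0,1]$, write $c_n = \sum_j \lambda_j^{(n)} e_j^{(n)}$ with orthogonal projections $e_j^{(n)}$ in the corner, and set $P_n = \sum_{\lambda_j^{(n)} \geq 1/2} e_j^{(n)}$. The point is the elementary inequality $\mathrm{dist}(\lambda, \{0,1\})^2 \leq \lambda(1-\lambda)$ for $\lambda \in [0,1]$, which yields $\|P_n - c_n\|_2^2 \leq \rho_n(c_n - c_n^2)$; combined with the operator-norm closeness of $c_n$ to $b_n$ this forces $\lim_{n \to \alpha}\|P_n - b_n\|_2 = 0$, i.e. $\{P_n\}_\alpha = p$. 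I expect this quantitative real rank zero construction to be the main obstacle: the hypothesis only supplies an operator-norm approximation by finite-spectrum elements, and the work lies in converting that into the correct $2$-norm control while checking that symmetrizing and clamping do not disturb the ultraproduct class of the lift. Once this projection-lifting fact is established, the corner reduction and the induction on $N$ are routine.
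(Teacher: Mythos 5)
Your proof is correct, but it takes a genuinely different route from the paper's, whose argument is essentially two lines: after the same first move as yours --- compressing to the corners $Q_n\mathcal{A}_nQ_n$, so that $q$ becomes the unit of the corner ultraproduct $\prod_{n\in\mathbb{N}}^{\alpha}\bigl(Q_n\mathcal{A}_nQ_n,\tfrac{1}{\rho_n(Q_n)}\rho_n\bigr)$ --- the paper observes that projections summing to $1$ generate a commutative $C^*$-algebra (a quotient of $\mathbb{C}^N$) and invokes Th.~2.5 of \cite{MainPaper}, which says commutative $C^*$-algebras are $RR0$-tracially stable; the lifted unital $*$-homomorphism then produces all the $R_{i,n}$ simultaneously, with $\sum_{i}R_{i,n}=Q_n$ automatic from unitality of the lift. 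You instead reprove exactly this special case of the stability theorem by hand: your induction on $N$ replaces the simultaneous lift by successive complementation (using that $q-r_1=\sum_{j\geq 2}r_j$ is again a projection, lifted by $Q_n-R_{1,n}$), and your single-projection lifting --- clamping a self-adjoint lift, approximating by finite-spectrum self-adjoints via real rank zero of the corner (hereditary subalgebras of $RR0$ algebras are $RR0$, \cite{BP}), and cutting the spectrum at $1/2$ --- is sound; the inequality $\operatorname{dist}(\lambda,\{0,1\})^2\leq\lambda(1-\lambda)$ on $[0,1]$ is precisely the right device to convert $\lim_{n\to\alpha}\rho_n(b_n-b_n^2)=0$ into $\lim_{n\to\alpha}\|P_n-b_n\|_{2,\rho_n}=0$. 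What each approach buys: the paper's proof is shorter but outsources all content to the companion paper, while yours is self-contained and quantitative, and it even sidesteps a small normalization point --- your estimates work with the unnormalized $\rho_n$, whereas the paper's normalized corner trace $\tfrac{1}{\rho_n(Q_n)}\rho_n$ tacitly assumes $\rho_n(Q_n)\neq 0$ (the degenerate case is trivial anyway, since $\rho(q)=0$ forces $q=0$ by faithfulness of the ultraproduct trace). The two technical points you flagged are indeed only routine: the spectrum of the finite-spectrum approximant $c_n$ lies within $\epsilon_n$ of $[0,1]$, so a second clamp costs at most $\epsilon_n$ in operator norm; and a projection in the corner is automatically $\leq Q_n$, so the orthogonality bookkeeping in your induction closes.
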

\begin{proof}
All $r_{i}$'s belong to the tracial ultraproduct $%
{\displaystyle \prod_{n\in \mathbb{N}}^{\alpha}}
\left(  Q_{n}\mathcal{A}_{n}Q_{n},\frac{1}{\rho_{n}\left(  Q_{n}\right)
}\rho_{n}\right)  $. $q$ is the unit element in this ultraproduct. Since
projections with sum 1 generate a commutative $C^{*}$-algebra, hence
$RR0$-stable by [Th. 2.5, \cite{MainPaper}], the statement follows.
\end{proof}

\begin{theorem}
\label{1-relator1} Let $G$ be as in (\ref{1-relatorgroupCase1}). Then $G$
 is $RR0$-stable.
\end{theorem}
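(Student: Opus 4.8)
The plan is to reduce, via Lemma \ref{LimitOfLiftable}, to approximating the given homomorphism in $\|\cdot\|_2$ by \emph{liftable} homomorphisms, and to produce such approximants by exploiting the central element of $G$ together with the projection-lifting Lemma \ref{projections}. By Lemma \ref{LimitOfLiftable} it suffices to show that every homomorphism $f:G\to\mathcal U(M)$ into $M=\prod^\alpha_m(\mathcal A_m,\rho_m)$ is a pointwise $\|\cdot\|_2$-limit, on finite subsets, of liftable homomorphisms: if $\psi$ is liftable with lifts $\psi_m$ and $\|\psi(g)-f(g)\|_2<\epsilon$ on a finite set $F$, then $\lim_\alpha\|\psi_m(g)-g(m)\|_2<\epsilon$ supplies the approximate homomorphisms required by condition (2) of that lemma. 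Writing $U_i=f(x_i)$, the relations give $U_i^{a_i}=U_{i+1}^{b_i}$ exactly in $M$. A direct computation shows that $U_i^{D_i}=C$ for every $i$, where $D_i=b_1\cdots b_{i-1}\,a_i\cdots a_{n-1}$ and $C:=U_1^{a_1\cdots a_{n-1}}=U_n^{b_1\cdots b_{n-1}}$; moreover $C$ is a unitary lying in the center of $W^*(U_1,\dots,U_n)$, so its spectral projections commute with every $U_i$.

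Next I would discretize over $C$. Since $M$ is a finite von Neumann algebra, hence of real rank zero, and $C$ is central in $W^*(U_1,\dots,U_n)$, I approximate $C$ in $\|\cdot\|_2$ by a central unitary $C'=\sum_j\lambda_j P_j$ with finite spectrum, where the $P_j$ are orthogonal central projections summing to $1$ and commuting with all $U_i$. On each corner $P_jMP_j$ we have $U_i^{D_i}=C\approx\lambda_j$, so after dividing by a scalar $D_i$-th root $\mu_{i,j}$ of $\lambda_j$ the unitary $\mu_{i,j}^{-1}U_iP_j$ is approximately of finite order $D_i$. Perturbing it spectrally, at small $\|\cdot\|_2$-cost, to a genuine unitary whose spectrum lies in the $D_i$-th roots of unity, I obtain unitaries $U_i'$ close to $U_i$ for which, on every corner, each $U_i'$ is a root-of-unity combination of an orthogonal family of spectral projections summing to $P_j$, and for which all relations $(U_i')^{a_i}=(U_{i+1}')^{b_i}$ now hold exactly. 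This defines the approximating homomorphism $\psi:x_i\mapsto U_i'$.

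Finally I would show $\psi$ is liftable using Lemma \ref{projections}. The key observation is that both $Z_{i-1}=(U_i')^{b_{i-1}}$ and $Z_i=(U_i')^{a_i}$ are powers of $U_i'$, so the data organize into a chain $U_1'\rightsquigarrow Z_1\rightsquigarrow U_2'\rightsquigarrow\cdots$ in which the spectral projections of $U_{i+1}'$ refine those of $Z_i=(U_{i+1}')^{b_i}=(U_i')^{a_i}$. I lift along this chain: first lift the family $\{P_j\}$ (summing to $1$) by Lemma \ref{projections}; then, inside each lifted corner, lift the spectral projections of $U_1'$ (summing to $P_j$), which determines a lift of $Z_1$ whose spectral projections are partial sums of the lifted family. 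Applying Lemma \ref{projections} again within each spectral projection of the lifted $Z_1$ lifts the spectral projections of $U_2'$ compatibly, so that $(U_2')^{b_1}=Z_1=(U_1')^{a_1}$ is preserved in each component; iterating through $U_3',\dots,U_n'$ yields component homomorphisms $\psi_m$ with all relations exact, and reassembling the lifted projections into unitaries gives the required lift of $\psi$.

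The main obstacle is the construction of the approximant $\psi$ — simultaneously discretizing the central element and perturbing all $n$ generators so that the entire chain of relations becomes exact on each corner at small $\|\cdot\|_2$-cost — together with the bookkeeping of the sequential, non-commuting projection families: the families attached to different generators need not commute, so Lemma \ref{projections} must be applied separately to each, anchored to the common lifts supplied by the refinement relations at the intermediate elements $Z_i$. Verifying that these refinements hold \emph{exactly}, not merely approximately, after the perturbation, so that the iterated application of Lemma \ref{projections} closes up and all eigenvalue scalars $\mu_{i,j}$ match consistently, is the delicate point.
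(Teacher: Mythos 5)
Your overall architecture is the same as the paper's: extract the central unitary $C=U_1^{a_1\cdots a_{n-1}}$, note its spectral projections commute with all the $U_i$, discretize it to a finite-spectrum central unitary $\sum_j\lambda_jP_j$, reduce the generators to finite-order unitaries encoded by families of projections satisfying linear relations, lift those families by commutative $RR0$-stability and Lemma \ref{projections} anchored along the chain of intermediate elements, and conclude by Lemma \ref{LimitOfLiftable}. Your lifting stage (lift the spectral family of $U_1'$, observe that the spectral projections of $(U_i')^{a_i}=(U_{i+1}')^{b_i}$ are partial sums of the already-lifted family, then apply Lemma \ref{projections} under each such partial sum to lift the next generator) is, in different words, exactly the paper's sequential treatment of the linear systems. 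The divergence, and the genuine gap, is in the construction of the approximant $\psi$: you divide $U_iP_j$ by a \emph{scalar} $D_i$-th root $\mu_{i,j}$ of $\lambda_j$, obtaining unitaries that are only \emph{approximately} of order $D_i$, and then perturb each generator spectrally to have spectrum in the $D_i$-th roots of unity, asserting that afterwards ``all relations $(U_i')^{a_i}=(U_{i+1}')^{b_i}$ now hold exactly.'' That assertion is unjustified and, for an arbitrary spectral perturbation, false: perturbing the two sides of an exact relation independently leaves the relation only approximate, and two unitaries with spectrum in the same finite set that are $\|\cdot\|_2$-close need not be equal. You yourself defer precisely this point (consistency of the $\mu_{i,j}$ and exactness of the refinements after perturbation) as ``the delicate point'' without resolving it --- but this point is the entire content of the theorem; everything else is bookkeeping.

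The gap is repairable, in two ways. Along your route one must (a) choose the scalars coherently, e.g.\ $\mu_{i,j}=\lambda_j^{1/D_i}$ with the fixed branch $z^{t}=|z|^{t}e^{itArg z}$, so that the identity $a_i/D_i=b_i/D_{i+1}$ gives $\mu_{i,j}^{a_i}=\mu_{i+1,j}^{b_i}$ and the relations hold \emph{exactly} for $V_{i,j}=\mu_{i,j}^{-1}U_iP_j$ already before any perturbation; and (b) perturb not arbitrarily but by the specific Borel function $h_{D_i}$ rounding the spectrum to the nearest $D_i$-th root of unity, using the compatibility $h_{D_i}(z)^{a_i}=h_{D_i/a_i}(z^{a_i})$ (valid since $a_i$ divides $D_i$, up to a fixed convention at midpoints) together with $D_i/a_i=D_{i+1}/b_i$, so that exactness of the relations survives the rounding. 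Neither ingredient appears in your write-up. The paper sidesteps the problem entirely with a cleaner device: it divides by the exact \emph{operator} root, setting $U_i'=U_iC^{-1/D_i}$ via Borel functional calculus on $C$ itself, which yields $(U_i')^{D_i}=1$ and all relations exactly with no perturbation at all; the $\varepsilon$-error enters only at the very end, when the lifted finite-order unitaries are multiplied back by the roots of $\Omega_n=\sum_j\lambda_jP_{n,j}$ --- the finite-spectrum discretization being needed precisely so that these roots exist inside the real-rank-zero C*-algebras $\mathcal A_n$ rather than only in a von Neumann algebra. As written, your construction of $\psi$ does not prove the theorem; with repair (a)+(b), or with the paper's operator-root trick substituted for your second paragraph, the rest of your argument goes through.
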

\begin{proof}
To avoid notational nightmare we will prove $RR0$-tracial stability for the
case $G= \left<  x, y, z\;|\; x^{2} = y^{3}, y^{5} = z^{7}\right>  $, and the
proof for the general case is absolutely similar.

Suppose $\left \{  \left(  \mathcal{A}_{n},\rho_{n}\right)  \right \}  $ is a
sequence of tracial $C^{*}$-algebras of real rank zero, $\alpha$ is a
non-trivial ultrafilter on $\mathbb{N}$, and $X,Y,Z\in%
{\displaystyle \prod_{n\in \mathbb{N}}^{\alpha}}
\left(  \mathcal{A}_{n},\rho_{n}\right)  =_{\text{\textrm{def}}}\left(
\mathcal{A},\rho \right)  $ are unitary and $X^{2}=Y^{3},Y^{5}=Z^{7}$. Then
$X^{10}=Y^{15}=Z^{21}=_{\text{\textrm{def}}}W$. We can write $X=\left \{
X_{n}\right \}  _{\alpha}$, $Y=\left \{  Y_{n}\right \}  _{\alpha}$, $Z=\left \{
Z_{n}\right \}  _{\alpha}$. Suppose $\varepsilon>0$. Since $X$, $Y$ and $Z$
commute with $W$, they commute with every spectral projection of $W$, and
since $\mathcal{A}$ is a von Neumann algebra, the spectral projections of $W$
are in $\mathcal{A}$. We can choose an orthogonal family of nonzero spectral
projections $\left \{  P_{1},\ldots,P_{s}\right \}  $ of $W$ whose sum is $1$
and we can choose $\lambda_{1},\ldots,\lambda_{s}\in \mathbb{T}$ such that if
 $\Omega=\sum_{k=1}^{s}\lambda_{k}P_{k}$, then%
\[
\left \Vert W^{m}-\Omega^{m}\right \Vert _{2}<\varepsilon,
\]
for $m\in \left \{  1,1/10,1/15,1/21\right \}  .$ Here and below by $W^{1/10}, W^{1/15}$, etc., we mean the normal operators obtained
by applying the Borel functions
$z^{1/10} =_{def} |z|^{1/10}e^{\frac{iArg z}{10}}$, etc.,  to $W$.

  Let $X^{\prime}=XW^{-1/10}$,
$Y^{\prime}=YW^{-1/15}$, $Z^{\prime}=ZW^{-1/21}$. Then $X^{\prime},$
$Y^{\prime}$ and $Z^{\prime}$ are unitary and
\begin{equation}
\left(  X^{\prime}\right)  ^{2}=(Y^{\prime})^{3} \label{powers1}%
\end{equation}%
\begin{equation}
(Y^{\prime})^{5}=(Z^{\prime})^{7}, \label{powers2}%
\end{equation}%
\begin{equation}
(X^{\prime})^{10}=(Y^{\prime})^{15}=(Z^{\prime})^{21}=1. \label{powers=1}%
\end{equation}

Moreover,
\begin{equation}
\label{estimate1}\| X-X^{\prime}\Omega^{1/10}\|_{2}<\varepsilon,
\end{equation}
\begin{equation}
\label{estimate2}\|Y-Y^{\prime}\Omega^{1/15}\|_{2}<\varepsilon,
\end{equation}
\begin{equation}
\label{estimate3}\|Z-Z^{\prime}\Omega^{1/21}\|_{2}<\varepsilon.
\end{equation}
Clearly
\[
T=\sum_{k=1}^{s}P_{k}TP_{k}%
\]
for $T\in \left \{  X,Y,Z, X^{\prime}, Y^{\prime}, Z^{\prime}, W, \Omega\right \}  .$
For each $n$ we can find an orthogonal family $\left \{  P_{n,1},\ldots
,P_{n,s}\right \}  $ of projections in $\mathcal{A}_{n}$ whose sum is $1$ such
that, for $1\leq k\leq s,$%
\[
P_{k}=\left \{  P_{n,k}\right \}  _{\alpha}\text{ .}%
\]
It is clear that $\sum_{k=1}^{s}P_{k}\mathcal{A}P_{k}$ is the tracial
ultraproduct $%
{\displaystyle \prod_{n\in \mathbb{N}}^{\alpha}}
\left(  \sum_{k=1}^{s}P_{n,k}\mathcal{A}_{n}P_{n,k},\rho_{n}\right)  $ and
that each $\left(  P_{k}\mathcal{A}P_{k},\frac{1}{\rho \left(  P_{k}\right)
}\rho \right)  $ is the tracial ultraproduct $%
{\displaystyle \prod_{n\in \mathbb{N}}^{\alpha}}
\left(  P_{n,k}\mathcal{A}_{n}P_{n,k},\frac{1}{\rho_{n}\left(  P_{n,k}\right)
}\rho_{n}\right)  $. By (\ref{powers=1}) $X^{\prime},Y^{\prime},Z^{\prime}$
can be written in the form
\[
X^{\prime}=\sum_{j=1}^{10}e^{\frac{2\pi ij}{10}}q_{j},\;Y^{\prime}=\sum
_{j=1}^{15}e^{\frac{2\pi ij}{15}}r_{j},\;Z^{\prime}=\sum_{j=1}^{21}%
e^{\frac{2\pi ij}{21}}s_{j},
\]
where $\{q_{j}\},\{r_{j}\},\{s_{j}\}$ are families of projections in
$\sum_{k=1}^{s}P_{k}\mathcal{A}P_{k}$ which sum to $1$. It is easy to see that
(\ref{powers1}) is equivalent to the system of equations
\[
q_{1}+q_{6}=r_{1}+r_{6}+r_{11}%
\]%
\[
q_{2}+q_{7}=r_{2}+r_{7}+r_{12}%
\]%
\[
q_{3}+q_{8}=r_{3}+r_{8}+r_{13}%
\]%
\[
q_{4}+q_{9}=r_{4}+r_{9}+r_{14}%
\]%
\[
q_{5}+q_{10}=r_{5}+r_{10}+r_{15}%
\]
and (\ref{powers2}) is equivalent to the system of equations
\[
r_{1}+r_{4}+r_{7}+r_{10}+r_{13}=s_{1}+s_{4}+s_{7}+s_{10}+s_{13}+s_{16}+s_{19}%
\]%
\[
r_{2}+r_{5}+r_{8}+r_{11}+r_{14}=s_{2}+s_{5}+s_{8}+s_{11}+s_{14}+s_{17}+s_{20}%
\]%
\[
r_{3}+r_{6}+r_{9}+r_{12}+r_{15}=s_{3}+s_{6}+s_{9}+s_{12}+s_{15}+s_{18}%
+s_{21}.
\]
Since $C^{\ast}(q_{1},\ldots,q_{10})$ is commutative, it is $RR0$-stable, so we can find projections $Q_{1}=\left \{  Q_{n, 1}\right \}
,\ldots,Q_{10}=\left \{  Q_{n, 10}\right \}  \in%
{\displaystyle \prod_{n\in \mathbb{N}}^{\alpha}}
\left(  \sum_{k=1}^{s}P_{n,k}\mathcal{A}_{n}P_{n,k}\right)  $ with sum 1 such
that $q_{1}=\left \{  Q_{n, 1}\right \}  _{\alpha},\ldots,q_{10}=\left \{
Q_{n, 10}\right \}  _{\alpha}$. By Lemma \ref{projections} we can find
projections $R_{1}=\left \{  R_{n, 1}\right \}  ,\ldots,R_{15}=\left \{
R_{n, 15}\right \}  \in%
{\displaystyle \prod_{n\in \mathbb{N}}^{\alpha}}
\left(  \sum_{k=1}^{s}P_{n,k}\mathcal{A}_{n}P_{n,k}\right)  $ such that
$r_{1}=\left \{  R_{n, 1}\right \}  _{\alpha},\ldots,r_{15}=\left \{
R_{n, 15}\right \}  _{\alpha}$ and
\[
Q_{1}+Q_{6}=R_{1}+R_{6}+R_{11}%
\]%
\[
Q_{2}+Q_{7}=R_{2}+R_{7}+R_{12}%
\]%
\[
Q_{3}+Q_{8}=R_{3}+R_{8}+R_{13}%
\]%
\[
Q_{4}+Q_{9}=R_{4}+R_{9}+R_{14}%
\]%
\[
Q_{5}+Q_{10}=R_{5}+R_{10}+R_{15}.
\]
Again by Lemma \ref{projections} we can find projections $S_{1}=\left \{
S_{n, 1}\right \}  ,\ldots,S_{21}=\left \{  S_{n, 21}\right \}  \in%
{\displaystyle \prod_{n\in \mathbb{N}}^{\alpha}}
\left(  \sum_{k=1}^{s}P_{n,k}\mathcal{A}_{n}P_{n,k}\right)  $ such that
$s_{1}=\left \{  S_{n, 1}\right \}  _{\alpha},\ldots,s_{15}=\left \{
S_{n, 15}\right \}  _{\alpha}$ and
\[
R_{1}+R_{4}+R_{7}+R_{10}+R_{13}=S_{1}+S_{4}+S_{7}+S_{10}+S_{13}+S_{16}+S_{19}%
\]%
\[
R_{2}+R_{5}+R_{8}+R_{11}+R_{14}=S_{2}+S_{5}+S_{8}+S_{11}+S_{14}+S_{17}+S_{20}%
\]%
\[
R_{3}+R_{6}+R_{9}+R_{12}+R_{15}=S_{3}+S_{6}+S_{9}+S_{12}+S_{15}+S_{18}%
+S_{21}.
\]
Let
\[
X_{n}^{\prime}=\sum_{j=1}^{10}e^{\frac{2\pi ij}{10}}Q_{n, j},\;Y_{n}^{\prime
}=\sum_{j=1}^{15}e^{\frac{2\pi ij}{15}}R_{n, j},\;Z_{n}^{\prime}=\sum
_{j=1}^{21}e^{\frac{2\pi ij}{21}}S_{n, j}.
\]

For each $n$, let $\Omega_{n}=\sum_{k=1}^{s}\lambda_{k}P_{n,k}$. Then $\Omega=\left \{
\Omega_{n}\right \}  _{\alpha}.$ For each $n\in \mathbb{N}$ there is a unital $\ast
$-homomorphism $\pi_{n}: C^*(G) \rightarrow \mathcal{A}_{n}$ such that%
\[
\pi_{n}\left(  x\right)  =X_{n}^{\prime}\Omega_{n}^{1/10}, \; \pi_{n}\left(
y\right)  =Y_{n}^{\prime}\Omega_{n}^{1/15}, \; \pi_{n}\left(  z\right)
=Z_{n}^{\prime}\Omega_{n}^{1/21}.
\] (Here again by  $\Omega_n^{1/10}$, etc., we mean the normal operator obtained
by applying the Borel function
$z^{1/10} =_{def} |z|^{1/10}e^{\frac{iArg z}{10}}$, etc.,  to $\Omega_n$. Since $\Omega_n$ has finite spectrum, $\Omega_n^{1/10}$, etc.,
belong to $\mathcal{A}_{n}$.)
Clearly, $$\left \{  \pi_{n}\left(  x\right)  \right \}  _{\alpha}=X^{\prime}\Omega^{
1/10}, \left \{  \pi_{n}\left(  y\right)  \right \}  _{\alpha}=Y^{\prime
}\Omega^{1/15}, \left \{  \pi_{n}\left(  z\right)  \right \}  _{\alpha}=Z^{\prime
}\Omega^{1/21}.$$ By (\ref{estimate1}), (\ref{estimate2}), (\ref{estimate3}) and
Lemma \ref{LimitOfLiftable}, $G $ is $RR0$-stable.
\end{proof}

\subsection{Groups of the form (\ref{1-relatorgroupCase2}).}

We will need a few easy lemmas. The first lemma is folklore.

\begin{lemma}
\label{Lemma2FromSeparateFile} Let $\mathcal M$ be a $II_1$-factor, $p\in \mathcal{M}$ be a projection and
$0\le \beta \le \tau(p)$. Then there is a projection $p^{\prime}\in p\mathcal{M
}p$ such that $\tau(p^{\prime}) = \beta$.
\end{lemma}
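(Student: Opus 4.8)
The plan is to reduce to the standard Murray--von Neumann fact that in a $II_1$-factor the trace attains every value in $[0,1]$ on projections, applied inside the corner $p\mathcal{M}p$. If $p=0$ the claim is trivial with $\beta=0$ and $p'=0$, so assume $p\neq 0$. The corner $p\mathcal{M}p$ is again a factor, and since $\mathcal{M}$ has no minimal projections neither does $p\mathcal{M}p$; thus $(p\mathcal{M}p,\tau_p)$ is a $II_1$-factor, where $\tau_p=\tau(p)^{-1}\tau|_{p\mathcal{M}p}$ is its normalized trace. It therefore suffices to produce, for the given $\beta$ with $0\le\beta\le\tau(p)$, a projection $p'\le p$ with $\tau(p')=\beta$.

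I would obtain such a $p'$ by a maximality argument. Consider the set $\mathcal{P}=\{q\in\mathcal{M}:q \text{ is a projection},\ q\le p,\ \tau(q)\le\beta\}$, partially ordered by $\le$. It is nonempty since $0\in\mathcal{P}$. Every chain $\{q_i\}$ in $\mathcal{P}$ has an upper bound in $\mathcal{P}$: the supremum $q=\bigvee_i q_i$ is a projection with $q\le p$, and by normality (order-continuity) of $\tau$ one has $\tau(q)=\sup_i\tau(q_i)\le\beta$, so $q\in\mathcal{P}$. By Zorn's lemma $\mathcal{P}$ has a maximal element $p'$, and I claim $\tau(p')=\beta$. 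Suppose instead $\tau(p')<\beta$. Then $p-p'$ is a nonzero projection, since $\tau(p-p')=\tau(p)-\tau(p')>\tau(p)-\beta\ge 0$. The key ingredient is that a nonzero projection in a $II_1$-factor has nonzero subprojections of arbitrarily small trace: as $\mathcal{M}$ is diffuse, $p-p'$ splits into two nonzero orthogonal subprojections, one of trace at most $\tfrac{1}{2}\tau(p-p')$, and iterating this halving yields a nonzero projection $e\le p-p'$ with $\tau(e)\le\beta-\tau(p')$. Then $e$ is orthogonal to $p'$ and $p'+e\le p$ is a projection with $\tau(p'+e)=\tau(p')+\tau(e)\le\beta$, contradicting the maximality of $p'$. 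Hence $\tau(p')=\beta$.

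The only substantive point, and the one I expect to be the main obstacle, is the halving step used to produce subprojections of arbitrarily small trace; it rests on $\mathcal{M}$ having no minimal projections (being of type $II_1$), so it must be argued independently of the present lemma to avoid circularity. An alternative, perhaps cleaner, route bypasses Zorn entirely: choose a maximal abelian self-adjoint subalgebra $\mathcal{A}\subseteq p\mathcal{M}p$, which is diffuse since $p\mathcal{M}p$ has no minimal projections, hence isomorphic to $L^\infty$ of a non-atomic measure space of total mass $\tau(p)$; the traces of the corresponding indicator projections then realize every value in $[0,\tau(p)]$, and in particular $\beta$, giving the desired $p'\in\mathcal{A}\subseteq p\mathcal{M}p$.
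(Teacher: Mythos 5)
Your proof is correct and takes essentially the same route as the paper, which disposes of the lemma in one line by citing the folklore fact that a $II_1$-factor contains projections of every prescribed trace (implicitly applied in the corner $p\mathcal{M}p$, exactly your reduction). Your Zorn-plus-halving argument is just the standard proof of that folklore fact, and your worry about circularity is correctly resolved: the halving step uses only the absence of minimal projections, not the lemma itself.
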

\begin{proof} It follows from folklore fact that in $II_1$-factor one can find a projection
with prescribed trace.
\end{proof}

\begin{lemma}
\label{LiftProjectionPreservingTrace} Let $p\in \prod_{\alpha} (\mathcal{M}%
_{i}, \rho_{i})$ be a projection. Then $p$ can be lifted to a projection
$\{P_{i}\} \in \prod \mathcal{M}_{i}$ with $\rho_{i}(P_{i}) = \rho(p)$, for all
$i$.
\end{lemma}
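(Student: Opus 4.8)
The plan is to proceed in two stages: first lift $p$ to \emph{some} sequence of projections, and then correct the traces index-by-index using the $II_1$-factor structure and Lemma \ref{Lemma2FromSeparateFile}.

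First I would produce a projection lift, ignoring the trace constraint. Since the quotient map $\prod_{i}\mathcal{M}_i \to \prod_{\alpha}(\mathcal{M}_i,\rho_i)$ is a surjective $\ast$-homomorphism and $p=p^{\ast}$, I can choose a self-adjoint lift and then apply the continuous function $h(t)=\max(0,\min(1,t))$; continuous functional calculus is preserved by $\ast$-homomorphisms and $h$ fixes $p$ (its spectrum lies in $\{0,1\}\subseteq[0,1]$), so I obtain a lift $\{a_i\}$ with $0\le a_i\le 1$ and $\{a_i\}_{\alpha}=p$. Because $p^2=p$, one has $\lim_{i\to\alpha}\|a_i^2-a_i\|_{2,\rho_i}=0$. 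As each $\mathcal{M}_i$ is a von Neumann algebra, the spectral projection $P_i=\chi_{[1/2,1]}(a_i)$ lies in $\mathcal{M}_i$, and the elementary pointwise inequality $(\chi_{[1/2,1]}(t)-t)^2\le |t^2-t|$ on $[0,1]$ gives $\|P_i-a_i\|_{2,\rho_i}^2\le\rho_i(|a_i^2-a_i|)\le\|a_i^2-a_i\|_{2,\rho_i}$. Hence $\lim_{i\to\alpha}\|P_i-a_i\|_{2,\rho_i}=0$, so $\{P_i\}_{\alpha}=p$ and in particular $\lim_{i\to\alpha}\rho_i(P_i)=\rho(p)=:\beta_0$.

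The remaining and main point is to replace $\{P_i\}$ by projections whose trace is \emph{exactly} $\beta_0$ at every index, and here Lemma \ref{Lemma2FromSeparateFile} is essential. For each $i$ I distinguish two cases. If $\rho_i(P_i)\ge\beta_0$, then since $0\le\beta_0\le\rho_i(P_i)$, Lemma \ref{Lemma2FromSeparateFile} applied in the factor $(\mathcal{M}_i,\rho_i)$ yields a projection $\tilde P_i\le P_i$ with $\rho_i(\tilde P_i)=\beta_0$. If $\rho_i(P_i)<\beta_0$, then $0\le\beta_0-\rho_i(P_i)\le 1-\rho_i(P_i)=\rho_i(1-P_i)$, so applying Lemma \ref{Lemma2FromSeparateFile} to $1-P_i$ produces $e_i\le 1-P_i$ with $\rho_i(e_i)=\beta_0-\rho_i(P_i)$, and I set $\tilde P_i=P_i+e_i$, an orthogonal sum of projections with $\rho_i(\tilde P_i)=\beta_0$. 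In either case $\tilde P_i$ is a projection satisfying $\rho_i(\tilde P_i)=\beta_0=\rho(p)$ for every $i$, and $\|\tilde P_i-P_i\|_{2,\rho_i}^2=|\rho_i(P_i)-\beta_0|$.

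Finally, since $\lim_{i\to\alpha}\rho_i(P_i)=\beta_0$, the quantity $|\rho_i(P_i)-\beta_0|$ tends to $0$ along $\alpha$, whence $\{\tilde P_i\}_{\alpha}=\{P_i\}_{\alpha}=p$. Thus $\{\tilde P_i\}\in\prod_i\mathcal{M}_i$ is a projection lift of $p$ with $\rho_i(\tilde P_i)=\rho(p)$ for all $i$, as required. The only genuine difficulty is arranging the prescribed trace \emph{exactly} at every single index rather than merely in the ultralimit; this is precisely what the $II_1$-factor hypothesis, via Lemma \ref{Lemma2FromSeparateFile}, delivers, whereas the projection lifting of the first stage is routine functional calculus in von Neumann algebras.
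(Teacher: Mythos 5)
Your proof is correct and takes essentially the same route as the paper: after producing some projection lift, you correct the trace at every index by the same two-case application of Lemma \ref{Lemma2FromSeparateFile} (pass to a subprojection of $P_i$ when $\rho_i(P_i)\ge\rho(p)$, add an orthogonal projection under $1-P_i$ otherwise) and conclude via the same estimate $\|\tilde P_i-P_i\|_{2,\rho_i}^2=|\rho_i(P_i)-\rho(p)|\to_\alpha 0$. The only difference is that you spell out, via the functions $h$ and $\chi_{[1/2,1]}$, the initial projection lifting that the paper asserts without proof, and you state the trace of the correcting projection correctly where the paper's second case contains a small typo ($\rho_i(Q_i)$ should be $\rho(p)-\rho_i(\tilde P_i)$, not $\rho_i(1-\tilde P_i)-\rho(p)$).
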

\begin{proof}
Lift $p$ to a projection $\{ \tilde P_{i}\}$. Then
\[
\rho_{i}(\tilde P_{i}) - \rho(p) \to_{\alpha} 0.
\]
If $\rho_{i}(\tilde P_{i}) \ge \rho(p)$, then by Lemma
\ref{Lemma2FromSeparateFile} there is a projection $Q_{i}\in \tilde
P_{i}\mathcal{M}_{i}\tilde P_{i}$ such that $\rho_{i}(Q_{i}) = \rho_{i}(\tilde
P_{i}) - \rho(p).$ Let
\[
P_{i} = \tilde P_{i} - Q_{i}.
\]
If $\rho_{i}(\tilde P_{i}) \le \rho(p)$, then $\rho_{i}(1-\tilde P_{i}) \ge
\rho(1-p)$. By Lemma \ref{Lemma2FromSeparateFile} there is a projection
$Q_{i}\in(1-\tilde P_{i})\mathcal{M}_{i} (1-\tilde P_{i})$ such that $\rho
_{i}(Q_{i}) = \rho_{i}(1-\tilde P_{i}) - \rho(p).$ In this case let
\[
P_{i} = \tilde P_{i} + Q_{i}.
\]
Either way $P_{i}$ is a projection and $\rho_{i}(P_{i}) = \rho(p).$  We have
\[
\rho_{i}(P_{i} - \tilde P_{i}) = \rho(p) - \rho_{i}(\tilde P_{i}) \to_{\alpha}
0
\]
and hence $\{P_{i}\}$ is a lift of $p$.
\end{proof}

\begin{lemma}
\label{LiftLinearEquationPreservinTrace} Let $p, q_{1}, \ldots, q_{n}\in
\prod_{\alpha} (\mathcal{M}_{i}, \rho_{i})$ be projections and $\sum_{k=1}^{n}
q_{k} = p$. Suppose $p$ is lifted to a projection $\{P_{i}\}$ with $\rho
_{i}(P_{i}) = \rho(p)$. Then each $q_{k}$ can be lifted to a projection
$\{Q_{k, i}\}$ such that for all $i$
\[
\sum_{k=1}^{n} Q_{k, i} = P_{i}%
\]
and for all $i, k$%
\[
\rho_{i}(Q_{k, i}) = \rho(q_{k}).
\]

\end{lemma}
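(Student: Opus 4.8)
The plan is to argue by induction on $n$, reducing everything to the single case of lifting one subprojection of $p$ while preserving its trace, which in turn follows from Lemma \ref{LiftProjectionPreservingTrace} applied inside a corner.

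First I would isolate the key building block: given a projection $q\le p$ in $\mathcal{M}=\prod_{\alpha}(\mathcal{M}_i,\rho_i)$ together with the prescribed lift $\{P_i\}$ of $p$ satisfying $\rho_i(P_i)=\rho(p)$, one can lift $q$ to a projection $\{Q_i\}$ with $Q_i\le P_i$ and $\rho_i(Q_i)=\rho(q)$ for all $i$. To see this, note (as already used in the proofs of Lemma \ref{projections} and Theorem \ref{1-relator1}) that the corner $p\mathcal{M}p$ is canonically the tracial ultraproduct $\prod_{\alpha}(P_i\mathcal{M}_iP_i,\frac{1}{\rho_i(P_i)}\rho_i)$. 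Each $P_i\mathcal{M}_iP_i$ is again a $II_1$-factor (the case $\rho(p)=0$ forcing $p=0$ and all $q_k=0$ is trivial, so we may assume $P_i\neq 0$), so Lemma \ref{LiftProjectionPreservingTrace} applies to this corner ultraproduct and lifts $q$ to a projection $\{Q_i\}$ with $Q_i\in P_i\mathcal{M}_iP_i$, hence $Q_i\le P_i$, whose normalized trace equals that of $q$. Since $\rho_i(P_i)=\rho(p)$ for every $i$, the normalized trace of $q$ is $\rho(q)/\rho(p)$, and matching normalized traces therefore yields exactly $\rho_i(Q_i)=\rho(q)$.

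With this building block the induction is routine. For $n=1$ we take $Q_{1,i}=P_i$. For the inductive step, set $p'=q_1+\cdots+q_{n-1}=p-q_n$. Apply the building block to the subprojection $q_n\le p$ to obtain a lift $\{Q_{n,i}\}$ with $Q_{n,i}\le P_i$ and $\rho_i(Q_{n,i})=\rho(q_n)$, and put $P_i'=P_i-Q_{n,i}$. Then $\{P_i'\}$ is a projection lift of $p'$ with $\rho_i(P_i')=\rho_i(P_i)-\rho_i(Q_{n,i})=\rho(p)-\rho(q_n)=\rho(p')$ for all $i$, so the inductive hypothesis applies to $q_1,\ldots,q_{n-1}$ summing to $p'$ with the lift $\{P_i'\}$, yielding orthogonal lifts $\{Q_{k,i}\}$, $k\le n-1$, with $Q_{k,i}\le P_i'$, $\sum_{k=1}^{n-1}Q_{k,i}=P_i'$, and $\rho_i(Q_{k,i})=\rho(q_k)$. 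Adding $Q_{n,i}$ back gives $\sum_{k=1}^{n}Q_{k,i}=P_i'+Q_{n,i}=P_i$, and all the required trace identities hold, completing the induction.

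I do not expect a serious obstacle: the argument is essentially bookkeeping around Lemma \ref{LiftProjectionPreservingTrace}. The only point needing care is the trace normalization in the corner. One must use the \emph{exact} hypothesis $\rho_i(P_i)=\rho(p)$, not merely $\rho_i(P_i)\to_{\alpha}\rho(p)$, in order to convert the matching of normalized traces in $p\mathcal{M}p$ into the level-wise identities $\rho_i(Q_i)=\rho(q)$, and to guarantee that $P_i'=P_i-Q_{n,i}$ again carries the exactly prescribed trace $\rho(p')$ so that the induction can be iterated.
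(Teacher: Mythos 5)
Your proof is correct and follows essentially the same route as the paper: both arguments reduce to Lemma \ref{LiftProjectionPreservingTrace} applied in the corner ultraproduct $\prod_{\alpha}\left(P_i\mathcal{M}_iP_i,\frac{1}{\rho_i(P_i)}\rho_i\right)$, using the exact equality $\rho_i(P_i)=\rho(p)$ to turn matched normalized traces into the level-wise identities $\rho_i(Q_{k,i})=\rho(q_k)$, and obtaining the last lift as the remainder $P_i-\sum_{k<n}Q_{k,i}$. The only cosmetic difference is that the paper lifts $q_1,q_2,\ldots,q_{n-1}$ successively in shrinking corners while you phrase the same iteration as an induction peeling off $q_n$; your explicit handling of the degenerate case $\rho(p)=0$ and of the corner's $II_1$-factor structure is a welcome extra precision.
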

\begin{proof}
By Lemma \ref{LiftProjectionPreservingTrace} we can lift $q_1$ to $\{Q_{1, i}\} \in \prod_{\alpha} (P_{i}\mathcal{M}_{i}%
P_{i}, \frac{\rho_{i}}{\rho_{i}(P_{i})})$ with $\rho_{i}(Q_{1, i}) =
\rho(q_{1}).$ Now, again by  Lemma \ref{LiftProjectionPreservingTrace}, we can lift $q_2$ to $$\{Q_{2, i}\} \in \prod_{\alpha} \left(\left(P_{i} - Q_{1, i}\right)\mathcal{M}_{i}%
\left(P_{i}- Q_{1, i}\right), \frac{\rho_{i}}{\rho_{i}\left(P_{i} - Q_{1, i}\right)}\right)$$ with $\rho_{i}(Q_{2, i}) =
\rho(q_{2}).$ Then we lift $q_3$ to $$\{Q_{3, i}\} \in \prod_{\alpha} \left(\left(P_{i} - \sum_{k=1}^2 Q_{k, i}\right)\mathcal{M}_{i}%
\left(P_{i}- \sum_{k=1}^2 Q_{k, i}\right), \frac{\rho_{i}}{\rho_{i}\left(P_{i} - \sum_{k=1}^2 Q_{k, i}\right)}\right)$$ with $\rho_{i}(Q_{3, i}) =
\rho(q_{3}).$ Continuing this process we obtain pairwisely orthogonal lifts $\{Q_{k, i}\} \in \prod_{\alpha} (P_{i}\mathcal{M}_{i}%
P_{i}, \frac{\rho_{i}}{\rho_{i}(P_{i})})$ of $q_k$, $k \le n-1$, such that  $\rho_{i}(Q_{k, i}) =
\rho(q_{k}).$
 Now we lift $q_{n}$  to the projection $\{P_{i} - \sum
_{k=1}^{n-1} Q_{k, i}\}\in \prod_{\alpha} (P_{i}\mathcal{M}_{i}%
P_{i}, \frac{\rho_{i}}{\rho_{i}(P_{i})})$. Then for all i
\[
\rho_{i}\left( P_{i} - \sum_{k=1}^{n-1}Q_{k, i}\right)  = \rho(p) - \rho \left(
\sum_{k=1}^{n-1}q_{k}\right)  = \rho(q_{n})
\]
and
\[
\sum_{k=1}^{n} Q_{k, i} = P_{i}%
.\]

\end{proof}

\begin{theorem}
\label{1-relator2} Let $G$ be as in (\ref{1-relatorgroupCase2}). Then
G is $II_{1}$-factor stable.
\end{theorem}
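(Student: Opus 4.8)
The plan is to follow the architecture of the proof of Theorem \ref{1-relator1}: peel off a central unitary, reduce the generators $x_1,\dots,x_m$ to finite-order unitaries that decompose into spectral projections, and lift those projections through the tracial ultraproduct. The genuinely new ingredient is the extra generator $u$ together with the conjugacy relation $ux_1u^{-1}=x_m$, and this is exactly where $II_1$-factor comparison theory (equal-trace projections are unitarily equivalent) is forced into the argument, explaining why here one obtains only $II_1$-factor stability. By Lemma \ref{LimitOfLiftable} it suffices, given a homomorphism into $\mathcal U(\mathcal M)$ with $\mathcal M=\prod_{i}^{\alpha}(\mathcal M_i,\rho_i)$ and each $\mathcal M_i$ a $II_1$-factor, to produce finite-level homomorphisms $\pi_i$ agreeing with the given generators up to a prescribed $\varepsilon$ in $\|\cdot\|_2$.

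Starting from unitaries $U,X_1,\dots,X_m\in\mathcal U(\mathcal M)$ satisfying the relations, I set $N=a_1\cdots a_{m-1}=b_1\cdots b_{m-1}$ and $W=X_1^{N}$. Telescoping $X_j^{a_j}=X_{j+1}^{b_j}$ gives $W=X_j^{d_j}$ with $d_j=b_1\cdots b_{j-1}a_j\cdots a_{m-1}$ (so $d_1=d_m=N$); thus $W$ is a power of every $X_j$ and commutes with all of them, while $UWU^{-1}=UX_1^{N}U^{-1}=X_m^{N}=W$, the last step using precisely the hypothesis $a_1\cdots a_{m-1}=b_1\cdots b_{m-1}$. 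Hence $W$ is central. As in Theorem \ref{1-relator1} I approximate $W$ in $\|\cdot\|_2$ by $\Omega=\sum_k\lambda_kP_k$ with $\{P_k\}$ orthogonal spectral projections of $W$ summing to $1$, controlling $\|W^{1/d_j}-\Omega^{1/d_j}\|_2$, and put $X_j'=X_jW^{-1/d_j}$. A short exponent check gives $(X_j')^{d_j}=1$ and $(X_j')^{a_j}=(X_{j+1}')^{b_j}$, and, crucially, $UX_1'U^{-1}=X_m'$, where $d_1=d_m=N$ and the centrality of $W$ make the $W$-factors cancel.

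Because $U$ commutes with $W$ it commutes with every $P_k$, so everything lives in the block-diagonal algebra $\bigoplus_kP_k\mathcal MP_k$ and $U=\bigoplus_kU^{(k)}$. There each $X_j'=\sum_l\mu_{j,l}p_{j,l}$ with the $\mu_{j,l}$ roots of unity, the relations $(X_j')^{a_j}=(X_{j+1}')^{b_j}$ become linear equations among the $p_{j,l}$, and $UX_1'U^{-1}=X_m'$ forces $Up_{1,l}U^{-1}=p_{m,l}$ (matching eigenvalues), whence $\rho(P_kp_{1,l})=\rho(P_kp_{m,l})$ in each block. I first lift the $P_k$ to orthogonal $P_{k,i}$ summing to $1$, giving a finite-level decomposition into corners $P_{k,i}\mathcal M_iP_{k,i}$, each again a $II_1$-factor; inside each corner I lift the spectral projections via Lemmas \ref{LiftProjectionPreservingTrace} and \ref{LiftLinearEquationPreservinTrace} (applied repeatedly, as in Theorem \ref{1-relator1}), preserving traces and the linear relations. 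This yields block-diagonal finite-order unitaries $X_{j,i}'=\sum_l\mu_{j,l}p_{j,l,i}$ with $(X_{j,i}')^{a_j}=(X_{j+1,i}')^{b_j}$ holding exactly and $\rho_i(p_{1,l,i})=\rho_i(p_{m,l,i})$ exactly, the common ultraproduct trace being preserved.

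The main obstacle is realizing the conjugacy relation at the finite level with a conjugator close to $U$. In block $k$ I polish the compression $P_{k,i}U_iP_{k,i}$ to a unitary $\tilde U_i^{(k)}$ of the corner, which is possible along $\alpha$ since $U$ commutes with $P_k$ and the compressions are therefore asymptotically unitary. Because $U^{(k)}$ conjugates $p_{1,l}$ to $p_{m,l}$ exactly at the ultraproduct level, the families $\{\tilde U_i^{(k)}p_{1,l,i}(\tilde U_i^{(k)})^{-1}\}_l$ and $\{p_{m,l,i}\}_l$ are orthogonal, sum to $P_{k,i}$, have equal traces, and are close in $\|\cdot\|_2$ along $\alpha$; the standard fact that two such projection families are intertwined by a unitary close to the identity (equal traces plus $II_1$-factor comparison) produces $D_i^{(k)}$ near $P_{k,i}$ with $V_i^{(k)}:=D_i^{(k)}\tilde U_i^{(k)}$ conjugating $p_{1,l,i}$ to $p_{m,l,i}$. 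Then $V_i=\bigoplus_kV_i^{(k)}$ is a unitary commuting with $\Omega_i=\sum_k\lambda_kP_{k,i}$, satisfying $V_iX_{1,i}'V_i^{-1}=X_{m,i}'$, with $\{V_i\}_\alpha=U$ since $U$ is already block-diagonal. Defining $\pi_i(x_j)=X_{j,i}'\Omega_i^{1/d_j}$ and $\pi_i(u)=V_i$, block-diagonality of $V_i$ makes the central factors cancel in $\pi_i(u)\pi_i(x_1)\pi_i(u)^{-1}=\pi_i(x_m)$, while the power relations hold by the exponent matching; since $\{\pi_i(x_j)\}_\alpha=X_j'\Omega^{1/d_j}$ lies within $\varepsilon$ of $X_j$, Lemma \ref{LimitOfLiftable} gives liftability, i.e. $G$ is $II_1$-factor stable.
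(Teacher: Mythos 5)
Your proposal is correct, and its overall architecture is the one the paper uses: extract the central unitary $W=\tilde\Omega$, cut by finitely many spectral projections, replace the $x_j$ by finite-order unitaries $x_j'=x_jW^{-1/N_j}$, lift the resulting spectral projections with exact traces and exact linear relations via Lemmas \ref{LiftProjectionPreservingTrace} and \ref{LiftLinearEquationPreservinTrace}, and conclude with Lemma \ref{LimitOfLiftable}. The one place where you genuinely diverge is the treatment of the conjugator $u$ (the paper's STEP 2). The paper first manufactures, in each corner, an exact intertwiner $W_n$ between the two lifted projection families (equal traces plus $II_1$-factor comparison), then takes an \emph{arbitrary} lift $\{X_n\}$ of $u$, compresses it to $\tilde X_n=\sum_k P^{(m)}_{k,n}X_nP^{(1)}_{k,n}$ (still a lift of $u$, since $u$ intertwines exactly in the ultraproduct), and extracts the unitary polar part $\tilde V_n=W_nV_n$; that $\{\tilde V_n\}_\alpha=u$ comes for free because $|\tilde X_n|\to 1$ in $\|\cdot\|_2$ along $\alpha$, so no quantitative perturbation estimate is ever needed. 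You instead lift $u$ corner-wise to honest unitaries $\tilde U_i^{(k)}$ (polar parts of the compressions, which is the same $II_1$-factor maneuver), conjugate the lifted projections, and then invoke a quantitative folklore lemma: two orthogonal families of projections with matching traces, summing to the corner unit and $\|\cdot\|_2$-close, are conjugate by a unitary near the identity. That lemma is true (polar-decompose $T=\sum_l q_l'q_l$, which intertwines the families on its support, and fill in the trace-matched defect projections by comparison; the defect trace is controlled by $\|T-P\|_2$), so your argument closes, but you are carrying a perturbation estimate that the paper's compress-then-polar trick sidesteps entirely: in the paper the only "closeness" statement needed is the soft ultraproduct fact that the polar part of a sequence converging to a unitary again converges to that unitary. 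Your route buys a reusable quantitative statement; the paper's buys a shorter verification at the cost of needing the intertwiner $W_n$ up front. Either way, the trace-preserving lifts and $II_1$-factor comparison are the load-bearing ingredients, which is exactly why the conclusion is $II_1$-factor stability rather than $RR0$-stability as in Theorem \ref{1-relator1}.
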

We would like to warn the reader that the notation in the proof below differs slightly from the notation
in the proof of Theorem \ref{1-relator1}: $\tilde \Omega$ now plays the role of $W$  and $Q_i$'s play the role of $P_i$'s.
\begin{proof}
Suppose $\left \{  \left(  \mathcal{A}_{n},\rho_{n}\right)  \right \}  $ is a
sequence of $II_{1}$-factors, $\alpha$ is a non-trivial ultrafilter on
$\mathbb{N}$, and $u, x_{1}, \ldots, x_{m}\in%
{\displaystyle \prod_{n\in \mathbb{N}}^{\alpha}}
\left(  \mathcal{A}_{n},\rho_{n}\right)  =_{\text{\textrm{def}}}\left(
\mathcal{A},\rho \right)  $ are unitaries satisfying the group relations. Then
\[
x_{1}^{a_{1}\ldots a_{m-1}} = x_{2}^{b_{1}a_{2}\ldots a_{m-1}} = \ldots=
x_{m}^{b_{1}\ldots b_{m-1}}=_{\text{\textrm{def}}}\tilde \Omega.
\]
Obviously $x_{1}, \ldots, x_{m}$ commute with $\tilde \Omega $. Let
\[
N_{i} = b_{1}\ldots b_{i-1}a_{i}\ldots a_{m-1}.
\]
Since $N_{1} = N_{m} $,
\[
u\tilde \Omega u^{-1} = (ux_{1}u^{-1})^{N_{1}} = x_{m}^{N_{1}} = x_{m}^{N_{m}} = \tilde \Omega.
\]
Thus $u, x_{1}, \ldots, x_{m}$ commute with $\tilde\Omega$ and hence with every spectral
projection of $\tilde \Omega$, and since $\mathcal{A}$ is a von Neumann algebra, the
spectral projections of $\tilde \Omega$ are in $\mathcal{A}$.

Let $\varepsilon>0$. We can choose an orthogonal family of nonzero spectral
projections $\left \{  Q_{1},\ldots, Q_{s}\right \}  $ of $\tilde \Omega$ whose sum is $1$
and we can choose $\lambda_{1},\ldots,\lambda_{s}\in \mathbb{T}$ such that if
$\Omega=\sum_{k=1}^{s}\lambda_{k}Q_{k}$, then%
\[
\left \Vert \tilde \Omega ^{i}-\Omega^{i}\right \Vert _{2}<\varepsilon,
\]
for $i\in \left \{  1,\frac{1}{N_{1}}, \ldots, \frac{1}{N_{m}}\right \}  .$
Here and below  by  $\Omega^{t}$, $\tilde \Omega^t$ etc., we mean the normal operator obtained
by applying the Borel function
$z^{t} =_{def} |z|^{t}e^{itArg z}$, etc.,  to $\Omega, \tilde \Omega$, etc.
Clearly
\[
T=\sum_{k=1}^{s}Q_{k}TQ_{k}%
\]
for $T\in \left \{  u, x_{1}, \ldots, x_{m}, \tilde \Omega, \Omega\right \}  .$ For each $n$ we
can find an orthogonal family $\left \{  Q_{n,1},\ldots, Q_{n,s}\right \}  $ of
projections in $\mathcal{A}_{n}$ whose sum is $1$ such that, for $1\leq k\leq
s,$%
\[
Q_{k}=\left \{  Q_{n,k}\right \}  _{\alpha}\text{ .}%
\]
It is clear that $\sum_{k=1}^{s}Q_{k}\mathcal{A}Q_{k}$ is the tracial
ultraproduct $%
{\displaystyle \prod_{n\in \mathbb{N}}^{\alpha}}
\left(  \sum_{k=1}^{s}Q_{n,k}\mathcal{A}_{n}Q_{n,k},\rho_{n}\right)  $ and
that each $\left(  Q_{k}\mathcal{A}Q_{k},\frac{1}{\rho \left(  Q_{k}\right)
}\rho \right)  $ is the tracial ultraproduct $%
{\displaystyle \prod_{n\in \mathbb{N}}^{\alpha}}
\left(  Q_{n,k}\mathcal{A}_{n}Q_{n,k},\frac{1}{\rho_{n}\left(  Q_{n,k}\right)
}\rho_{n}\right)  $. Let
\[
x_{i}^{\prime} = x_{i}\tilde \Omega^{-\frac{1}{N_{i}}},
\]
$i = 1, \ldots, m.$ Then
\begin{equation}
\label{estimate11}\| x_{i}-x_{i}^{\prime}\Omega^{1/{N_{i}}}\|_{2}<\varepsilon,
\end{equation}
and
\begin{equation}
\label{newequation}ux_{1}^{\prime}u^{-1} = x_{m}^{\prime}%
\end{equation}
\begin{equation}
\label{old1}(x_{1}^{\prime})^{a_{1}} = (x_{2}^{\prime})^{b_{1}},
(x_{2}^{\prime})^{a_{2}} = (x_{3}^{\prime})^{b_{2}}, \ldots, (x_{m-1}^{\prime
})^{a_{m-1}} = (x_{m}^{\prime})^{b_{m-1}}%
\end{equation}%
\begin{equation}
\label{old2}(x_{1}^{\prime})^{a_{1}\ldots a_{m-1}} = \ldots= (x_{m}^{\prime
})^{b_{1}\ldots b_{m-1}} = 1.
\end{equation}

We notice also that $u, x_{1}^{\prime}\Omega^{1/{a_{1}\ldots a_{m-1}}}, \ldots,
x_{m}^{\prime}\Omega^{1/{b_{1}\ldots b_{m-1}}}$ satisfy the group relations.

Now we are going to "lift" the relations (\ref{newequation}) -- (\ref{old2})
and we will do it in two steps.

\bigskip

\textbf{STEP 1}: To "lift" the relations (\ref{old1}) and (\ref{old2}) so that
$x_{1}^{\prime}$ and $x_{m}^{\prime}$ will be lifted to $\{X^{\prime(1)}_{
n}\}$, $\{X^{\prime(m)}_{n}\} \in \prod \left(  \sum_{k=1}^{s}Q_{n,k}%
\mathcal{A}_{n}Q_{n,k}\right) $ unitarily equivalent to each other. (Possibly
this unitary equivalence won't be a lift of $u$.)

\bigskip To do STEP 1 we notice that the relation (\ref{old2}) implies that
each $x_{i}^{\prime}$ can be written as a linear combination of projections
\[
x_{i}^{\prime}= \sum_{k=1}^{N_{i}} e^{\frac{2\pi i k}{N_{i}} }p^{(i)}_{k},
\]
$i = 1, \ldots, m$. In (\ref{old1}) each relation
\[
(x_{i}^{\prime})^{a_{i}} = (x_{i+1}^{\prime})^{b_{i}}%
\]
now translates into a system of linear equations with some of $p_{k}^{(i)}$,
$k= 1, \ldots, N_{i}$, in the left-hand sides and some of $p_{k}^{(i+1)}$, $k=
1, \ldots, N_{i+1}$, in the right-hand sides. (We don't write out the details
since we did it in the proof of Theorem \ref{1-relator1}).

Since each $p_{k}^{(i)}$ is the direct sum of the projections $Q_{j}%
p_{k}^{(i)}Q_{j}$, this system of linear equations translates into $s$ systems
of linear equations, one for each coordinate. Thus for each $j = 1, \ldots, s$
we have a system of linear equations with some of $Q_{j}p_{k}^{(i)}Q_{j}$, $k=
1, \ldots, N_{i}$, in the left-hand sides and some of $Q_{j}p_{k}^{(i+1)}%
Q_{j}$, $k= 1, \ldots, N_{i+1}$, in the right-hand sides.

We notice also that (\ref{newequation}) implies that
\[
up_{k}^{(1)}u^{-1} = p_{k}^{(m)}%
\]
for all $k= 1, \ldots, N = N_{1} = N_{m}$. In particular
\begin{equation}
\label{TracesEqual}\rho \left( Q_{j}p_{k}^{(1)}Q_{j}\right)  = \rho \left(
Q_{j}p_{k}^{(m)}Q_{j}\right)
\end{equation}
for all $k = 1, \ldots, N$, $j=1, \ldots, s$.

By Lemma \ref{LiftProjectionPreservingTrace} we can lift each projection
$Q_{j}p_{k}^{(1)}Q_{j}$ to a projection $\{P_{k, n, j}^{(1)}\} \in \prod \left(
Q_{n,j}\mathcal{A}_{n}Q_{n,j}\right) $ of the same trace as $Q_{j}p_{k}%
^{(1)}Q_{j}$. By Lemma \ref{LiftLinearEquationPreservinTrace} we can lift each
$Q_{j}p_{k}^{(2)}Q_{j}$ to a projection $\{P_{k, n, j}^{(2)}\} \in \prod \left(
Q_{n,k}\mathcal{A}_{n}Q_{n,k}\right) $ of the same trace as $Q_{j}p_{k}%
^{(2)}Q_{j}$ and such that the family

\noindent$\left \{ \{P_{k, n, j}^{(1)}\}, \{P_{l, n, j}^{(2)}\} \;|\; k=1,
\ldots, N, \;l = 1, \ldots, N_{2}\right \}  $ would satisfy the same linear
relations as the family $\{Q_{j}p_{k}^{(1)}Q_{j}, Q_{j}p_{l}^{(2)}Q_{j} \;|\;
k=1, \ldots, N, \;l = 1, \ldots, N_{2}\}$. We keep doing this. We end up with
projections $\{P_{k, n, j}^{(i)}\}$, $i= 1, \ldots, m$, $k=1, \ldots, N_{i}$,
$j = 1, \ldots, s$ of the same trace as $Q_{j}p_{k}^{(i)}Q_{j}$ and satisfying
the same system of linear relations. In particular, by (\ref{TracesEqual}) we
have
\[
\rho_{n}(P_{k, n, j}^{(1)}) = \rho_{n}(P_{k, n, j}^{(m)}),
\]
$n\in \mathbb{N}$, $k= 1, \ldots, N$, $j = 1, \ldots, s$. Then there is unitary $\{W_{n, j}\} \in
\prod \left( Q_{n,k}\mathcal{A}_{n}Q_{n,k}\right) $ such that
\begin{equation}
W_{n, j} P_{k, n, j}^{(1)} W_{n, j}^{-1} = P_{k, n, j}^{(m)}%
\end{equation}
for all $n, k, j$. Let for each $k, n, i$
\[
P_{k, n}^{(i)} = \sum_{j=1}^{s} P_{k, n, j}^{(i)}.
\]
For each $n$ let
\[
W_{n} = \sum_{j=1}^{s} W_{n, j}.
\]
Then the projections $\{P_{k, n}^{(i)}\}$, $k= 1, \ldots, N_{i}$, $i = 1,
\ldots, m$ are lifts of $p_{k}^{(i)}$ and satisfy the same system of linear
equations. We have also
\begin{equation}
\label{liftsx_1andx_munitarilyequivalent}W_{n} P_{k, n}^{(1)} W_{n}^{-1} =
P_{k, n}^{(m)}%
\end{equation}
for all $n, k$. Let
\[
X^{\prime(i)}_{ n} = \sum_{k=1}^{N_{i}} e^{\frac{2\pi i k}{N_{i}} }P^{(i)}_{k,
n},
\]
$i= 1, \ldots, m$, $k=1, \ldots, N_{i}$, $n\in \mathbb{N}$. Then the unitaries
$\{X^{\prime(i)}_{ n}\}$, $i= 1, \ldots, m$, are lifts of $x_{i}$'s and
satisfy the relations (\ref{old1}) and (\ref{old2}). It follows from
(\ref{liftsx_1andx_munitarilyequivalent}) that $\{X^{\prime(1)}_{ n}\}$,
$\{X^{\prime(m)}_{n}\} \in \prod \left(  \sum_{k=1}^{s}Q_{n,k}\mathcal{A}%
_{n}Q_{n,k}\right) $ are unitarily equivalent to each other. STEP 1 is done.

\bigskip

\bigskip

\textbf{STEP 2}: Given the lifts $\{X^{\prime(i)}_{n}\}$ of $x^{\prime}_{i}$,
$i=1, \ldots, m$, constructed in STEP 1, to find a lift of $u$ which would
conjugate $\{X_{n}^{\prime(1)}\}$ and $\{X_{n}^{\prime(m)}\}$.

\bigskip

At first we lift $u$ to anything, say $\{X_{n}\} \in \prod \left(  \sum_{k=1}%
^{s}Q_{n,k}\mathcal{A}_{n}Q_{n,k}\right) $, that is
\[
\{X_{n}\}_{\alpha} = u.
\]
Let for each n
\[
\tilde X_{n} = \sum_{k=1}^{N} P_{k, n}^{(m)}X_{n}P_{k, n}^{(1)},
\]
where $P_{k, n}^{(m)}$ and $P_{k, n}^{(1)}$ are projections constructed in
STEP 1. Then
\[
P_{k, n}^{(m)}\tilde X_{n} = \tilde X_{n} P_{k, n}^{(1)}%
\]
for all $k, n$ and
\[
\{ \tilde X_{n}\}_{\alpha} = u,
\]
because
\[
\sum p_{k}^{(m)}up_{k}^{(1)} = \sum up_{k}^{(1)}p_{k}^{(1)} = \sum
up_{k}^{(1)} = u.
\]
We are going to show that the unitary from the polar decomposition of $\tilde
X_{n}$ also will conjugate $P_{k, n}^{(m)}$ and $P_{k, n}^{(1)}$, for all $k$'s.

By (\ref{liftsx_1andx_munitarilyequivalent}), for each $n$  we have
\begin{equation}
\label{2.2FromSeparateFile}\tilde X_{n} = W_{n}\sum_{k=1}^{N} P_{k, n}%
^{(1)}W_{n}^{-1}X_{n} P_{k, n}^{(1)}.
\end{equation}
For each $k, n, j$, $\; \;P_{k, n, j}^{(1)}\mathcal{A}_{n} P_{k, n, j}^{(1)}$
is a $II_{1}$-factor. As is well known, in $II_{1}$-factors a partial isometry
in polar decomposition can always be chosen unitary. Since
\[
P_{k, n}^{(1)}\left(  \sum_{j=1}^{s}Q_{n,j}\mathcal{A}_{n}Q_{n,j}\right)
P_{k, n}^{(1)} = \oplus_{j=1}^{s} P_{k, n, j}^{(1)}\mathcal{A}_{n} P_{k, n,
j}^{(1)},
\]
for each $k, n$ we have
\[
P_{k, n}^{(1)}W_{n}^{-1}X_{n}P_{k,n}^{(1)} = V_{k, n}\left| P_{k, n}%
^{(1)}W_{n}^{-1}X_{n}P_{k,n}^{(1)}\right|
\]
with $V_{k, n}\in P_{k, n}^{(1)}\left(  \sum_{j=1}^{s}Q_{n,j}\mathcal{A}%
_{n}Q_{n,j}\right)  P_{k, n}^{(1)}$ being unitary.  Let
\[
V_{n} = \sum_{k=1}^{N} V_{k, n}.
\]
It is unitary and
\begin{equation}
\label{2.3FromSeparateFile}V_{n} = \sum_{k=1}^{N} P_{k, n}^{(1)} V_{n} P_{k,
n}^{(1)}.
\end{equation}
We have, by (\ref{liftsx_1andx_munitarilyequivalent}) and
(\ref{2.2FromSeparateFile}),
\begin{multline}
\tilde X_{n} = W_{n}V_{n}\left| \sum_{k=1}^{N} P_{k, n}^{(1)}W_{n}^{-1}%
X_{n}P_{k, n}^{(1)}\right|  = W_{n}V_{n}\left| \sum_{k=1}^{N} W_{n}^{-1}P_{k,
n}^{(m)}W_{n}W_{n}^{-1}X_{n}P_{k, n}^{(1)}\right|  =\\
W_{n}V_{n}\left| W_{n}^{-1}\sum_{k=1}^{N} P_{k, n}^{(m)}X_{n}P_{k, n}%
^{(1)}\right|  = W_{n}V_{n}\left| \sum_{k=1}^{N} P_{k, n}^{(m)}X_{n}P_{k,
n}^{(1)}\right|  = W_{n}V_{n}\left| \tilde X_{n}\right| .
\end{multline}
Let
\[
\tilde V_{n} = W_{n} V_{n}.
\]
Then
\[
\tilde X_{n} = \tilde V_{n} \left| \tilde X_{n}\right|
\]
and since $\{ \tilde X_{n}\}_{\alpha} = u$, we conclude that
\[
\{ \tilde V_{n}\}_{\alpha} = u.
\]
By (\ref{2.3FromSeparateFile})
\[
\tilde V_{n} = W_{n}\sum_{k=1}^{N} P_{k, n}^{(1)}V_{n}P_{k, n}^{(1)} =
\sum_{k=1}^{N} P_{k, n}^{(m)}W_{n}V_{n}P_{k, n}^{(1)}.
\]
Hence
\[
P_{k, n}^{(m)}\tilde V_{n} = \tilde V_{n} P_{k, n}^{(1)}%
\]
which implies that
\[
X_{n}^{\prime(m)}\tilde V_{n} = \tilde V_{n} X_{n}^{\prime(1)}.
\]
STEP 2 is done.

\bigskip

For each $n$, let $\Omega_{n}=\sum_{k=1}^{s}\lambda_{k}Q_{n,k}$. Then $\Omega=\left \{
\Omega_{n}\right \}  _{\alpha}.$ Let
\[
X_{n}^{(i)} = X_{n}^{\prime(i)}\Omega_{n}^{1/N_{i}}.
\]
Then by (\ref{estimate11})
\[
\|x_{i} - \{X_{n}^{(i)}\}_{\alpha}\|_{2} \le \epsilon.
\]
Since $\tilde V_{n}, X_{n}^{(1)}, \ldots, X_{n}^{(m)}$ satisfy the group
relations, Lemma \ref{LimitOfLiftable} completes the proof.
\end{proof}

Theorems \ref{1-relator1} and \ref{1-relator2} imply that

\begin{theorem}
\label{1-relatorTheorem} One-relator groups with a nontrivial center are
$II_{1}$-factor stable.
\end{theorem}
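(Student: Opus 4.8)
The plan is to reduce everything to the structural classification of one-relator groups with non-trivial center and then apply the two preceding theorems case by case. First I would invoke the result of Pietrowski (\cite{Pietrowski1}) quoted above: every non-cyclic one-relator group with a non-trivial center admits a presentation of the form (\ref{1-relatorgroupCase1}) or of the form (\ref{1-relatorgroupCase2}). This splits the proof into three cases (including the cyclic one, which Pietrowski's theorem excludes and which must therefore be handled separately), and in each case the substantive analytic work has already been carried out, so what remains is essentially bookkeeping about which flavour of stability each case supplies.

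For the cyclic case, a cyclic group is abelian, hence its full $C^*$-algebra is commutative and thus $RR0$-stable by [Th.~2.5, \cite{MainPaper}] (the same fact used inside the proof of Theorem \ref{1-relator1}); I would simply record this. For groups of the form (\ref{1-relatorgroupCase1}), Theorem \ref{1-relator1} gives $RR0$-stability outright. For groups of the form (\ref{1-relatorgroupCase2}), Theorem \ref{1-relator2} gives $II_{1}$-factor stability directly.

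To merge the cases I would use the hierarchy of stability notions established in Section 2: $RR0$-stability implies $W^{*}$-factor stability (since every von Neumann algebra has real rank zero, \cite{BP}), and $W^{*}$-factor stability implies $II_{1}$-factor stability. Thus the $RR0$-stability obtained for the cyclic case and for groups of the form (\ref{1-relatorgroupCase1}) downgrades to $II_{1}$-factor stability, which matches the conclusion supplied directly by Theorem \ref{1-relator2} for groups of the form (\ref{1-relatorgroupCase2}). Taking the three cases together yields $II_{1}$-factor stability for an arbitrary one-relator group with non-trivial center.

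Since Theorems \ref{1-relator1} and \ref{1-relator2} do all the real work, there is no serious obstacle here; the only points demanding care are to invoke the classification so that it genuinely exhausts every such group (not forgetting the cyclic groups), and to remember that $RR0$-stability is strictly stronger than $II_{1}$-factor stability, so that all three cases can be concluded uniformly at the weaker $II_{1}$ level.
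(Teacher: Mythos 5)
Your proposal is correct and takes essentially the same route as the paper: the paper likewise reduces via Pietrowski's classification to the presentations (\ref{1-relatorgroupCase1}) and (\ref{1-relatorgroupCase2}), notes separately that cyclic groups are $RR0$-stable, and concludes by combining Theorem \ref{1-relator1} (with the implication $RR0$-stable $\Rightarrow$ $W^*$-factor stable $\Rightarrow$ $II_1$-factor stable) and Theorem \ref{1-relator2}. Your bookkeeping of the cases and of the stability hierarchy matches the paper's argument exactly.
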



\subsection{RFD}

Recall that a $C^*$-algebra is {\it residually finite-dimensional} (RFD) if it has a separating family of finite-dimensional representations.

Though property of being RFD is not directly related to stability,
arguments similar to ones used in the proof of Theorem \ref{1-relator1} can be
applied to show that for groups of the form (\ref{1-relatorgroupCase1}),
$C^{*}(G)$ is RFD. We will need a lemma.

\begin{lemma}
\label{projectionsRFD} Suppose $Q, R_{1}, \ldots, R_{N}\in B(H)$ are
projections and $\sum_{i=1}^{N} R_{i} = Q$, $P_{n}\in B(H)$ are finite-rank
projections and $P_{n}\uparrow1$, $Q_{n}\in P_{n}B(H)P_{n}$ and SOT-$\lim
Q_{n} = Q$. Then there exists projections $R_{n}^{(i)}\in P_{n}B(H)P_{n}$ such
that SOT-$\lim R_{n}^{(i)} = R_{i}$ and $\sum_{i=1}^{N} R_{n}^{(i)} = Q_{n}.$
\end{lemma}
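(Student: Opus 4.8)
The plan is to reduce to lifting a single subprojection by induction on $N$, the inductive step being essentially free, and to handle the single-projection case by a spectral-projection argument applied to the compressions $Q_nR_iQ_n$. Note first that $\sum_{i=1}^N R_i = Q$ forces the $R_i$ to be mutually orthogonal subprojections of $Q$. So suppose I can solve the following single-projection version: given projections $R\le Q$ in $B(H)$ and finite-rank projections $Q_n\in P_nB(H)P_n$ with SOT-$\lim Q_n=Q$, produce projections $R_n\le Q_n$ with SOT-$\lim R_n=R$. Granting this, I first lift $R_1$ to such an $R_n^{(1)}\le Q_n$, set $Q_n':=Q_n-R_n^{(1)}$, which is a finite-rank projection in $P_nB(H)P_n$ with SOT-$\lim Q_n'=Q-R_1$, and apply the induction hypothesis to the orthogonal projections $R_2,\ldots,R_N$ (summing to $Q-R_1$) with ambient approximants $Q_n'$. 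The resulting $R_n^{(1)},\ldots,R_n^{(N)}$ are then automatically orthogonal, since $R_n^{(i)}\le Q_n'=Q_n-R_n^{(1)}$ for $i\ge 2$, and they sum to $R_n^{(1)}+Q_n'=Q_n$, as required.

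For the single-projection step I would set $A_n:=Q_nRQ_n$, a positive contraction supported on $Q_nH$. The key computation is that $A_n^k\to R$ strongly for every $k\ge 1$: writing out $(Q_nRQ_n)^k$ and using that each factor $Q_n\to Q$ strongly with uniform bound $1$, the product converges strongly to $QRQ\cdots RQ=R$, since $QR=RQ=R$. Approximating an arbitrary continuous $f$ on $[0,1]$ with $f(0)=0$ uniformly by polynomials vanishing at $0$, and using the uniform bound $\|f(A_n)\|\le\|f\|_\infty$, this upgrades (by a standard $\varepsilon/3$ argument) to $f(A_n)\to f(1)R$ strongly for every such $f$.

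Finally I would take $R_n:=\chi_{[1/2,1]}(A_n)$, which is a projection dominated by $Q_n$ (as $A_n$ vanishes on $(1-Q_n)H$, so the spectral projection for $[1/2,1]$ lies under $Q_n$) and hence finite-rank. To see SOT-$\lim R_n=R$, choose continuous functions $f_1,f_2$ on $[0,1]$ with $f_i(0)=0$, $f_i(1)=1$ and $f_2\le\chi_{[1/2,1]}\le f_1$ pointwise (e.g. $f_1=1$ on $[1/2,1]$ and $f_2=0$ on $[0,1/2]$). By functional calculus $f_2(A_n)\le R_n\le f_1(A_n)$, so $0\le R_n-f_2(A_n)\le f_1(A_n)-f_2(A_n)$; since $f_1(A_n)-f_2(A_n)\to(1-1)R=0$ strongly and is positive, the standard fact that $0\le D_n\le E_n$ with $E_n\to 0$ strongly forces $D_n\to 0$ strongly gives $R_n-f_2(A_n)\to 0$, and as $f_2(A_n)\to R$ we conclude $R_n\to R$ strongly. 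I expect the main obstacle to be exactly this last point: SOT convergence is too weak to control a spectral projection directly, because the eigenvalues of $A_n$ could accumulate near the cut-off $1/2$ while $A_n-A_n^2$ tends to $0$ only strongly; it is the sandwich between $f_2(A_n)$ and $f_1(A_n)$ together with the positivity trick that circumvents the absence of any norm control.
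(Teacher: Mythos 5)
Your argument is correct, but it takes a genuinely different route from the paper's. The paper disposes of the lemma in two lines: restricting to $\tilde H=Q(H)$, the projections $R_1,\ldots,R_N$ sum to the identity of $B(\tilde H)$, hence generate a commutative $C^*$-algebra, which is RFD, and the conclusion is then read off from the lifting characterization of RFD $C^*$-algebras in \cite{DonRFD} (Theorem 11) --- i.e.\ it outsources all the work to an external theorem, the same one underpinning the paper's broader theme. You instead prove everything by hand: induction on $N$ reduces the statement to lifting a single subprojection $R\le Q$, and for that you compress, set $A_n=Q_nRQ_n$, note $A_n^k\to R$ strongly (joint SOT-continuity of multiplication on bounded sets plus $QRQ=R$), upgrade to $f(A_n)\to f(1)R$ for continuous $f$ with $f(0)=0$ via polynomial approximation, and cut off with $R_n=\chi_{[1/2,1]}(A_n)$; the sandwich $f_2(A_n)\le R_n\le f_1(A_n)$ combined with the fact that $0\le D_n\le E_n$ with $E_n\to 0$ strongly forces $D_n\to 0$ strongly (via $\langle D_n\xi,\xi\rangle\le\langle E_n\xi,\xi\rangle$ and $D_n=D_n^{1/2}D_n^{1/2}$) correctly handles the genuine danger spot, namely that spectral projections are not SOT-continuous and the eigenvalues of $A_n$ could pile up near the cutoff. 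All the details check out: the base case $N=1$ is $R_n^{(1)}=Q_n$; the inductive step is sound because $Q_n'=Q_n-R_n^{(1)}$ is again a projection in $P_nB(H)P_n$ converging strongly to $Q-R_1$; eigenvectors of $A_n$ for eigenvalues $\ge 1/2$ lie in $Q_nH$, so $R_n\le Q_n\le P_n$ puts $R_n$ in the corner; and orthogonality of the lifts is automatic from $R_n^{(i)}\le Q_n'$. As for what each approach buys: the paper's proof is short and uniform with its other RFD arguments but leans on a nontrivial citation; yours is elementary and self-contained, makes the lifts satisfy $R_n^{(i)}\le Q_n$ by construction, and in fact proves slightly more than stated, since you never use that the $Q_n$ are finite-rank or that $P_n\uparrow 1$ --- those hypotheses only serve to make the resulting $R_n^{(i)}$ finite-rank, which is what the application to Theorem \ref{RFD} needs.
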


\begin{proof}
Let $\tilde H = Q(H).$ Then $Q$ is the unit in $B(\tilde H)$. Since
projections with sum 1 generate a commutative $C^{*}$-algebra, hence RFD, the
statement follows from [\cite{DonRFD}, Th. 11].
\end{proof}

\begin{theorem}\label{RFD}
Let $G$ be of the form (\ref{1-relatorgroupCase1}). Then $C^{*}(G)$ is RFD.
\end{theorem}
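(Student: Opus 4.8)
The plan is to mirror the proof of Theorem \ref{1-relator1}, replacing the tracial ultraproduct by an arbitrary $B(H)$, the $\|\cdot\|_2$-topology by the strong operator topology (SOT), and the finite matrix approximants by compressions to an increasing sequence of finite-rank projections. First I would fix a faithful representation $\pi : C^*(G) \to B(H)$ on a separable Hilbert space (which exists since $C^*(G)$ is separable) and set $U_i = \pi(x_i)$, so that the $U_i$ are unitaries with $U_i^{a_i} = U_{i+1}^{b_i}$. As in Theorem \ref{1-relator1}, these relations force a common power $W := U_i^{N_i}$ (with the exponents arranged so that $N_i/a_i = N_{i+1}/b_i$), and since $W$ is a power of each $U_i$ it commutes with all the generators; hence the spectral projections of the unitary $W$ lie in $B(H)$ and commute with every $U_i$. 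To conclude RFD it suffices to produce, for the faithful $\pi$, a family of finite-dimensional representations $\sigma$ of $G$ (equivalently of $C^*(G)$) with $\|a\| = \sup_\sigma \|\sigma(a)\|$ for all $a$; because each $\sigma$ is contractive and $\pi$ is isometric, this reduces to approximating $\pi$ by finite-dimensional representations in the point-SOT sense, after which any nonzero $a$ will satisfy $\sigma(a)\neq 0$ for a suitable $\sigma$.

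The construction then proceeds exactly along the lines of Theorem \ref{1-relator1}. I would approximate $W$ in operator norm on the finitely many relevant powers by $\Omega = \sum_{k=1}^s \lambda_k P_k$, a combination of finitely many orthogonal spectral projections of $W$ with scalars $\lambda_k \in \mathbb{T}$, and set $U_i' = U_i W^{-1/N_i}$, which are unitaries of finite order and therefore finite sums of spectral projections satisfying the linear equations into which the relations $U_i^{a_i} = U_{i+1}^{b_i}$ translate. Fixing finite-rank projections $P_{n,k} \uparrow P_k$ with $P_n := \sum_k P_{n,k} \uparrow 1$, I would first lift the spectral projections of $U_1'$ into the corners $\sum_k P_{n,k} B(H) P_{n,k}$ using RFD of commutative $C^*$-algebras (the content of [\cite{DonRFD}, Th. 11], packaged as Lemma \ref{projectionsRFD}), and then propagate the lift along the chain through the successive linear relations by repeated application of Lemma \ref{projectionsRFD}, so that at every finite stage the lifted projections satisfy the same linear equations \emph{exactly} and converge in SOT to the original spectral projections. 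Reassembling, the finite-rank unitaries $X_n'^{(i)} = \sum_k e^{2\pi i k/N_i} R_{n}^{(i,k)}$ commute with $\Omega_n := \sum_k \lambda_k P_{n,k}$, and because the exponents are arranged so that the $\Omega_n$-powers match across each relation, the operators $X_n^{(i)} := X_n'^{(i)} \Omega_n^{1/N_i}$ satisfy the group relations exactly and thus define genuine finite-dimensional representations $\sigma_n$ of $G$ on $P_n H$.

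It remains to check convergence. Since each lifted projection and each $P_{n,k}$ converges in SOT to its target, and all these building blocks are self-adjoint, both $X_n'^{(i)}$ and its adjoint converge in SOT; together with $\Omega_n \to \Omega$ this yields $X_n^{(i)} \to X_i'\Omega^{1/N_i}$ in SOT, while the spectral estimate $\|U_i - X_i'\Omega^{1/N_i}\| < \varepsilon$ (the analog of (\ref{estimate1})) places the limit within $\varepsilon$ of $U_i$ in norm. To extract the separating family I would argue as follows: given $a \neq 0$, faithfulness gives $\|\pi(a)\| > 0$; approximating $a$ in norm by a $*$-polynomial in the generators and using that words in the $\sigma_n(x_i)$ converge in SOT to the corresponding words in the $U_i$ (valid because the factors are uniformly bounded, so SOT-convergence passes through products and adjoints), one finds, for $\varepsilon$ small and $n$ large, a single finite-dimensional $\sigma_n$ with $\|\sigma_n(a)\|$ bounded below by a positive constant, hence $\sigma_n(a) \neq 0$. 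Thus finite-dimensional representations separate the points of $C^*(G)$, which is exactly the RFD property.

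I expect the main obstacle to be precisely this last reconciliation of two separate approximations. Unlike the tracial setting, where Lemma \ref{LimitOfLiftable} lets $\varepsilon$-approximate liftings suffice directly, here the spectral projections $P_k$ of the central $W$ are typically infinite-rank, so the passage to finite-dimensional corners introduces a limit ($P_{n,k} \uparrow P_k$) that must be interleaved with the norm-level spectral coarsening ($\Omega \to W$, controlled only up to $\varepsilon$). The delicate point is to arrange the finite-rank corner projections $P_{n,k}$ to be simultaneously orthogonal, to sum to a finite-rank projection increasing to $1$, and to be compatible with the exact linear relations produced by Lemma \ref{projectionsRFD} at each stage, all while driving the combined error below any prescribed threshold. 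Verifying that these constraints can be met at once, and that the resulting $\sigma_n$ genuinely separate the points of $C^*(G)$, is where the real work lies.
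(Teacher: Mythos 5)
Your proposal is essentially correct, but it takes a genuinely different and noticeably heavier route than the paper's. You run the whole apparatus of Theorem \ref{1-relator1} inside one faithful representation: there the common power $W=U_i^{N_i}$ is a genuine unitary with possibly continuous spectrum, which forces the norm-level spectral coarsening $\Omega\approx W$ (with the usual care that the arcs not straddle the branch cut of $z^{1/N_i}$), then the blockwise SOT finite-rank lifting via Lemma \ref{projectionsRFD} inside the corners $P_kB(H)P_k$, and finally the reconciliation of the two limits by a $*$-polynomial/contractivity argument --- which does work, provided the quantifiers are ordered as: fix $a\neq 0$, choose $p$ with $\|a-p\|<\eta$, then choose $\varepsilon$ small relative to a constant depending on $p$, then take $n$ large; since the $\sigma_n$ and the limiting $\varepsilon$-perturbed representation are all genuine representations of $C^*(G)$, hence contractive, the estimate closes. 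The paper avoids all of this interleaving by separating points one element at a time through \emph{irreducible} representations: if $\pi$ is irreducible with $\pi(a)\neq 0$, then the central unitary $\pi(x^{10})=\pi(y^{15})=\pi(z^{21})$ (in the model case) lies in $\pi(C^*(G))'=\mathbb{C}1$, so it is a scalar $\lambda\in\mathbb{T}$; rescaling the generators by roots of $\lambda$ makes the finite-order relations hold \emph{exactly}, so $\pi$ factors through the universal $C^*$-algebra $\mathcal{D}$ of projections satisfying the linear relations, and a single application of Lemma \ref{projectionsRFD} (i.e.\ of \cite{DonRFD}) shows that every representation of $\mathcal{D}$ is a point-SOT limit of finite-dimensional ones, whence some finite-dimensional representation does not vanish at $a$. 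Irreducibility thus eliminates your $\varepsilon$, the coarsening $\Omega$, and precisely the interleaving you flag as ``where the real work lies''. What your approach would buy, if written out in full, is somewhat more than RFD --- finite-dimensional representations approximating an arbitrary given representation up to an arbitrarily small norm perturbation of the generators, in the spirit of Hadwin's lifting characterization --- while the paper's approach buys an exact, error-free reduction to the scalar-central case, at the modest price of the standard fact that irreducible representations separate the points of a $C^*$-algebra.
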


\begin{proof}
Again we will do it for the case $G= \left<  x, y, z\;|\; x^{2} = y^{3}, y^{5}
= z^{7}\right>  $, and the proof for the general case is analogous. Let $0\neq
a\in C^{*}(G).$ Then there exists an irreducible representation $\pi$ of
$C^{*}(G)$ such that $\pi(a)\neq0$. The representation $\pi$ must factorize
through the $C^{*}$-algebra
\[
C^{*} \left(  x, y, z \;| \; x^{2} = y^{3}, y^{5} = z^{7}, \; x^{10}=y^{15} =
z^{21} = 1 \right)  .
\]
Indeed, $\pi \left(  x^{10}\right)  =\pi \left(  y^{15}\right)  = \pi \left(
z^{21}\right)  \in \pi \left(  C^{\ast}\left(  G\right)  \right)  ^{\prime
}=\mathbb{C}1$. Hence there is $\lambda \in \mathbb{T}$ such that $\pi(x^{10}) =
\pi(y^{15}) = \pi(z^{21})= \lambda$. Then there is an isomorphism
\[
\pi(C^{*}(G))\cong C^{*} \left(  x, y, z \;| \; x^{2} = y^{3}, y^{5} = z^{7},
\; x^{10}=y^{15} = z^{21} = 1 \right)
\]
given by
\[
x^{\prime} \mapsto{\pi(x)}{\lambda}^{-1/10}, \; y^{\prime} \mapsto
\pi(y){\lambda}^{-1/15}, \;z^{\prime} \mapsto \pi(z){\lambda}^{-1/21}.
\]
Here   by ${\lambda}^{-1/10}$ etc.  we mean
$ |\lambda|^{-1/10}e^{-\frac{iArg \lambda}{10}}$ etc.
By arguments used in the proof of Theorem \ref{1-relator1}, the latter algebra
is isomorphic to the universal $C^{*}$-algebra $\mathcal{D}$ of the relations
\begin{multline}
q_{1}+q_{6} = r_{1}+r_{6}+r_{11}\\
q_{2}+q_{7} = r_{2}+r_{7}+r_{12}\\
q_{3}+q_{8} = r_{3}+r_{8}+r_{13}\\
q_{4}+q_{9} = r_{4}+r_{9}+r_{14}\\
q_{5}+q_{10} = r_{5}+r_{10}+r_{15}\\
r_{1}+r_{4}+r_{7}+r_{10}+r_{13} = s_{1}+s_{4}+s_{7}+s_{10}+s_{13}%
+s_{16}+s_{19}\\
r_{2}+r_{5}+r_{8}+r_{11}+r_{14} = s_{2}+s_{5}+s_{8}+s_{11}+s_{14}%
+s_{17}+s_{20}\\
r_{3}+r_{6}+r_{9}+r_{12}+r_{15} = s_{3}+s_{6}+s_{9}+s_{12}+s_{15}%
+s_{18}+s_{21},
\end{multline}
where all $q_{i}, r_{k}, s_{m}, i=1,\ldots, 10, k=1, \ldots, 15, m = 1,
\ldots, 21 $, are projections.

Thus $\pi= \psi \circ j$, where $j: C^{*}(G)\to \mathcal{D }$ and $\psi$ is a
representation of $\mathcal{D }$. It follows from Lemma \ref{projectionsRFD},
that any representation of $\mathcal{D}$ is a pointwise SOT-limit of
finite-dimensional representations. Hence there exists a finite-dimensional
representation $\phi$ of $\mathcal{D }$ such that $\phi(j(a)) \neq0$. Thus
there is a finite-dimensional representation (namely $\phi \circ j$) of
$C^{*}(G)$ that does not vanish on $a$.
\end{proof}

\end{document}